\documentclass{amsart}
\usepackage{CJKutf8}

\usepackage{mathtools}
\usepackage{etoolbox}
\usepackage{ytableau} 
\makeatletter
\patchcmd{\@settitle}{\uppercasenonmath\@title}{}{}{}
\patchcmd{\@setauthors}{\MakeUppercase}{}{}{}
\patchcmd{\section}{\scshape}{}{}{}
\makeatother
\makeatletter
\patchcmd{\@maketitle}
  {\ifx\@empty\@dedicatory}
  {\ifx\@empty\@date \else {\vskip2ex 
  \centering\footnotesize\@date\par\vskip1ex}\fi
   \ifx\@empty\@dedicatory}
  {}{}
\patchcmd{\@adminfootnotes}
  {\ifx\@empty\@date\else \@footnotetext{\@setdate}\fi}
  {}{}{}
\makeatother

\usepackage{enumitem}
\usepackage{tikz}
\usetikzlibrary{backgrounds}
\usetikzlibrary{matrix}
\usetikzlibrary{arrows}
\usetikzlibrary{shapes,shapes.geometric,shapes.misc}
\tikzstyle{tikzfig}=[baseline=-0.25em,scale=0.75]
\pgfkeys{/tikz/tikzit fill/.initial=0}
\pgfkeys{/tikz/tikzit draw/.initial=0}
\pgfkeys{/tikz/tikzit shape/.initial=0}
\pgfkeys{/tikz/tikzit category/.initial=0}
\pgfdeclarelayer{edgelayer}
\pgfdeclarelayer{nodelayer}
\pgfsetlayers{background,edgelayer,nodelayer,main}

\tikzstyle{none}=[inner sep=0mm]
\tikzstyle{Vert}=[fill=black, draw=black, shape=circle, minimum size=0.4em, inner sep=0.4pt, scale=0.6]
\tikzstyle{label-red}=[fill=white, draw=black, shape=circle, scale=0.4, color=red]
\tikzstyle{label small}=[fill=none, draw=none, shape=circle, scale=0.7, inner sep = 0.1]
\tikzstyle{blt}=[fill=black, draw=black, shape=circle, scale=0.5]
\tikzstyle{G-vert}=[fill=white, draw=black, shape=circle, inner sep=0.7pt, minimum size=0.4em, scale=0.6]
\tikzstyle{label}=[inner sep=0.15mm, font={\footnotesize}]
\tikzstyle{vert}=[fill=black, draw=white, shape=circle, line width=0.25mm, tikzit shape=circle, inner sep=0.35mm]
\tikzstyle{nodes}=[fill=white, draw=none, shape=circle, inner sep=0.7pt, font={\scriptsize}, minimum size=11pt]
\tikzstyle{label-s}=[inner sep=0.1mm, font={\scriptsize}]

\tikzstyle{label-w}=[fill=white, draw=white, shape=circle, inner sep=0.07mm, font={\scriptsize}]
\tikzstyle{particle}=[fill=black, draw=black, shape=circle, inner sep=0.76 mm]

\tikzstyle{dashed}=[-, densely dotted]
\tikzstyle{virtual}=[-, double]
\tikzstyle{RED}=[-, draw=red]
\tikzstyle{CYAN}=[-, draw=cyan]
\tikzstyle{BLUE}=[-, draw=blue]
\tikzstyle{LGREEN}=[-, draw=green]
\tikzstyle{DGREEN}=[-, draw={rgb,255: red,0; green,128; blue,128}]
\tikzstyle{PURPLE}=[-, draw={rgb,255: red,128; green,0; blue,128}]
\tikzstyle{ORANGE}=[-, draw={rgb,255: red,255; green,128; blue,0}]
\tikzstyle{MAGENTA}=[-, draw=magenta]
\tikzstyle{BLUE(dashed)}=[-, draw=blue, densely dotted]
\tikzstyle{GREEN}=[-, draw={rgb,255: red,0; green,208; blue,6}]
\tikzstyle{MAGENTA(dashed)}=[-, draw=magenta, densely dotted]
\tikzstyle{ORANGE(dashed)}=[-, draw={rgb,255: red,255; green,128; blue,0}, densely dotted]
\tikzstyle{RED(dashed)}=[-, draw={rgb,255: red,191; green,0; blue,64}, densely dotted]
\tikzstyle{PURPLE(dashed)}=[-, draw={rgb,255: red,128; green,0; blue,128}, densely dotted]
\tikzstyle{GREEN(dashed)}=[-, draw={rgb,255: red,0; green,208; blue,6}, densely dotted]
\tikzstyle{shade}=[-, opacity=0.4, draw={rgb,255: red,128; green,128; blue,128}, line width=6.5, fill=none, line cap=round,rounded corners]
\tikzstyle{hyper}=[-, fill={rgb,255: red,48; green,255; blue,214}, fill opacity=0.6, draw={rgb,255: red,18; green,229; blue,85}, tikzit fill={rgb,255: red,48; green,255; blue,214}]
\tikzstyle{hyper1}=[-, fill={rgb,255: red,230; green,138; blue,9}, draw={rgb,255: red,255; green,128; blue,0}, fill opacity=0.5]
\tikzstyle{hyper2}=[-, fill={rgb,255: red,128; green,179; blue,255}, draw={rgb,255: red,46; green,87; blue,115}, fill opacity=0.55]
\tikzstyle{new edge style 0}=[-, fill={rgb,255: red,245; green,255; blue,39}, draw={rgb,255: red,168; green,170; blue,22}, fill opacity=0.6]
\tikzstyle{blue}=[-, draw={rgb,255: red,49; green,149; blue,255}, line width=1.5pt]
\tikzstyle{red}=[-, draw=red, line width=1.5pt]
\tikzstyle{blue thick}=[-, tikzit draw=blue, draw=blue, line width=1.7pt]
\tikzstyle{thick}=[-, ultra thick]
\tikzstyle{dotted}=[-, densely dotted]
\tikzstyle{e1}=[-, color=1, line width=1]
\tikzstyle{e2}=[-, color=2, line width=1]
\tikzstyle{e3}=[-, color=3, line width=1]
\tikzstyle{e4}=[-, color=4, line width=1]
\tikzstyle{e5}=[-, color=5, line width=1]
\tikzstyle{e6}=[-, color=6, line width=1]
\tikzstyle{e7}=[-, color=7, line width=1]
\tikzstyle{e8}=[-, color=8, line width=1]
\tikzstyle{e9}=[-, color=9, line width=1]
\tikzstyle{e0}=[-, color=0, line width=1]
\tikzstyle{ev}=[-, color=v, line width=1]
\tikzstyle{d1}=[-, color=1, line width=0.8, densely dotted]
\tikzstyle{d2}=[-, color=2, line width=0.8, densely dotted]
\tikzstyle{d3}=[-, color=3, line width=0.8, densely dotted]
\tikzstyle{d4}=[-, color=4, line width=0.8, densely dotted]
\tikzstyle{d5}=[-, color=5, line width=0.8, densely dotted]
\tikzstyle{d6}=[-, color=6, line width=0.8, densely dotted]
\tikzstyle{d7}=[-, color=7, line width=0.8, densely dotted]
\tikzstyle{d8}=[-, color=8, line width=0.8, densely dotted]
\tikzstyle{d9}=[-, color=9, line width=0.8, densely dotted]
\tikzstyle{d0}=[-, color=0, line width=0.8, densely dotted]

\tikzstyle{m1}=[-, line width=2]
\tikzstyle{red-fill}=[-, fill=red, fill opacity=0.5,draw = none]
\tikzstyle{blue-fill}=[-, fill=blue, fill opacity=0.5, draw = none]

\tikzset{%
every path/.append style={line width = 0.8 pt}
}

\definecolor{2}{rgb}{0.91, 0.33, 0.5}
\definecolor{1}{rgb}{0.0, 1.0, 1.0} 
\definecolor{8}{rgb}{1.0, 0.49, 0.0}
\definecolor{3}{rgb}{0.71, 0.49, 0.86}
\definecolor{6}{rgb}{0.5, 0, 0}
\definecolor{5}{rgb}{0.0, 0.5, 1.0}
\definecolor{0}{rgb}{0.6,0.6,0.6}
\definecolor{7}{rgb}{1.0, 0.75, 0.0}
\definecolor{4}{rgb}{0.2, 0.2, 0.6}
\definecolor{9}{rgb}{0.24, 0.57, 0.22}
\definecolor{v}{rgb}{0.54, 0.47, 1.12}

\usepackage{stmaryrd}
\usepackage{mathtools}
\def\multichoose#1#2{\ensuremath{\left(\kern-.3em\left(\genfrac{}{}{0pt}{}{#1}{#2}\right)\kern-.3em\right)}}
\def\mbinom#1#2{\ensuremath{\left(\kern-.3em\left(\genfrac{}{}{0pt}{}{#1}{#2}\right)\kern-.3em\right)}}

\usepackage{mathabx}

\usepackage{accents}

\newcommand{\tup}[1]{\langle{#1}\rangle}

\newcommand{\north}{\mathrm{north}}

\renewcommand{\O}{\mathcal{O}}
\newcommand{\ord}{\mathrm{ord}}

\DeclareMathOperator{\cc}{\mathbb{C}}

\DeclareMathOperator{\oto}{\Omega_{\textup{north}}}
\DeclareMathOperator{\ori}{\Omega_{\textup{east}}}
\DeclareMathOperator{\evac}{evac}

\makeatother

\usepackage{graphicx}
\usepackage{color}
\usepackage{etoolbox}
\usepackage[margin=1in]{geometry}
\usepackage{amsmath,amsthm,amssymb,tikz,tikz-cd}
\usepackage[backref=page]{hyperref}
\hypersetup{
    colorlinks = true,
    citecolor = orange,%
}
\usepackage[noabbrev]{cleveref}
\usepackage{subcaption}

\usepackage{tikz-cd}

\theoremstyle{definition}
\newtheorem{theorem}{Theorem}[section]

\newtheorem{lemma}[theorem]{Lemma}
\newtheorem{prop}[theorem]{Proposition}
\newtheorem{conj}[theorem]{Conjecture}

\newtheorem{corollary}[theorem]{Corollary}

\theoremstyle{definition}
\newtheorem{remark}[theorem]{Remark}
\newtheorem{example}[theorem]{Example}

\newtheorem{definition}[theorem]{Definition}

\DeclareMathOperator{\zz}{\mathbb{Z}}

\renewcommand{\L}{\mathrm{L}}
\renewcommand{\S}{\mathrm{s}}
\newcommand{\C}{\mathrm{R}}
\newcommand{\K}{\mathcal{K}}

\DeclareMathOperator{\col}{Col}
\DeclareMathOperator{\row}{Row}

\DeclareMathOperator{\sh}{sh}

\newcommand{\RS}{\operatorname{RS}}

\title[]{Dual Affine Robinson-Schensted Correspondence}

\author[Daoji Huang and Sylvester Zhang]{%
\parbox{3cm}{\centering
Daoji Huang$^\flat$\\
\begin{CJK}{UTF8}{gkai}
黄道骥
\end{CJK}
}%
\hspace{1em}%
\parbox{3cm}{\centering
Sylvester W. Zhang$^\sharp$\\
\begin{CJK}{UTF8}{gkai}
张文泽
\end{CJK}
}
}

\thanks{$^\flat$\href{mailto:daojihuang@umass.edu}{daojihuang@umass.edu} University of  Massachusetts Amherst}

\thanks{$^\sharp$\href{mailto:sylvesterzhang@math.ucla.edu}{sylvesterzhang@math.ucla.edu} University of California Los Angeles}
\date{}

\begin{document}
\setcounter{tocdepth}{1}
\maketitle
\begin{abstract}
	We introduce the dual affine Robinson–Schensted correspondence that gives a bijection between the extended affine symmetric group and tuples $(\bar{P},\bar{Q},\lambda,N)$, where $\bar{P}$ and $\bar{Q}$ are tabloids, $\lambda$ is a partition, and $N$ is an integer, subject to compatibility conditions. The construction generalizes Fomin's growth diagrams and Viennot's shadow lines for the classical Robinson-Schensted correspondence on the symmetric group, and is dual to the affine matrix ball construction as well as Shi's correspondence, in the sense that the $P$-tabloids are the same, and the $Q$-tabloids are related by affine evacuation. As a consequence, our construction also parametrizes Kazhdan–Lusztig cells in affine type $A$. We conjecture that the growth diagrams we construct admit a natural geometric realization in terms of relative positions of affine flags, similar to the interpretation given by Steinberg and van Leeuwen in the classical case. 
	\end{abstract}
\tableofcontents


\section{Introduction}

The classical Robinson--Schensted correspondence  is a bijection between the symmetric group $S_n$ and the set of pairs of standard Young tableaux of the same shape of size $n$. It
 plays a fundamental role in representation theory and geometry through two applications. On the one hand, it parametrizes Kazhdan--Lusztig cells in the symmetric group. On the other hand, it admits a geometric interpretation in terms of the relative position of pairs of flags in $G/B$ \cite{steinberg_rs}. Together, these applications show that the Robinson--Schensted correspondence provides a natural bridge between the theory of Kazhdan-Lusztig cells and the geometry of flag varieties. Combinatorially, there are several different well-known constructions of the Robinson--Schensted correspondence, including Schensted's insertion algorithm and diagrammatic constructions of Fomin, Viennot, and Fulton. Each approach has its own unique advantage revealing different structures of the bijection: crudely speaking, the insertion algorithm is most amenable for computation and experimentation, while the diagrammatic approaches tend to reveal deeper properties, such as symmetries, of the correspondence. Notably, Fomin's growth diagrams are particularly well-suited for geometry: they can be directly interpreted as arising from the Jordan types of the restrictions of a generic nilpotent operator which simultaneously contracts a pair of complete flags in relative position $w$  to all pairwise intersections of the subspaces in the two flags, thereby making the link between combinatorics and geometry especially explicit.
 We refer the reader to the excellent exposition \cite{fominbritz} that explains this perspective, as well as further connections to the Green-Kleitman duality in poset theory.
  
 Motivated by the constructions of Steinberg, Spaltenstein, and Springer, Lusztig \cite{lusztig} introduced $S$-cells as the image of the relative position map of irreducible components of a Springer fiber, and noted that this partition of the Weyl group is somewhat similar to the partition by two-sided cells in terms of Hecke algebras. Lusztig noted that they coincide in type $A$ but diverge for other classical types. He also asked for further investigation of $S$-cells and two-sided cells in affine types, since the theories of Hecke algebras and Springer fibers generalize. 
 
 In affine type~$A$, Shi introduced an affine Robinson--Schensted correspondence that associates to each affine permutation a pair of tabloids $\bar P,\bar Q$. The fundamental feature of Shi's construction is that two affine permutations belong to the same right cell if and only they have the same $\bar P$-tabloid, thereby providing an affine analogue of the classical cell parametrization \cite{shi}. Shi's construction is a sophisticated insertion algorithm on tabloids. This, however, is far from a bijection, as it maps an infinite set to a finite set. Honeywill \cite{honeywill} refined Shi's map to a bijection by adding a third piece of data which is a dominant weight to the image.
 Chmutov, Pylyavskyy, and Yudovina \cite{ambc} introduced the affine matrix ball construction, which provides a diagrammatic interpretation of the affine Robinson--Schensted correspondence, and gave an asymptotic interpretation of Shi's correspondence using the usual Robinson--Schensted, which was studied earlier by Pak \cite{pak}. In the manuscript \cite{pablo},
 Boixeda, Ying, and Yue applied the affine matrix ball construction and proved that the $S$-cells and two-sided cells coincide in affine type $A$ in the ``rectangular type'' case; the general case remains open.
 
 In this paper, we introduce a \emph{dual affine Robinson--Schensted correspondence}%
\[
w \longmapsto \bigl(\bar{P}, \bar{Q}, \lambda, N\bigr)
\]%
for the extended affine symmetric group, where $\bar{P}, \bar{Q}$ are tabloids of the same shape, $\lambda$ is a partition, and $N\in\mathbb{Z}$ satisfying certain compatibility conditions. Our construction simultaneously generalizes Fomin's growth diagrams and Viennot's shadow line construction, and it is dual to Shi's correspondence and the affine matrix ball construction in the sense that the resulting $P$-tableaux are the same and the $Q$-tableaux differ by the affine evacuation map in the sense of \cite{chmutov2022affine}. This resembles what happens in the usual setting: if inserting a permutation $w$ read from left to right via row insertion gives $P$ and $Q$ and inserting $w$ read from right to left via column insertion gives $P'$ and $Q'$, then $P=P'$ and $Q$ and $Q'$ are related by the evacuation map. As a result, our construction recovers the known affine Kazhdan-Lusztig cell structure. A unique feature of our map is a coloring rule on the shadow lines which is intrinsic to the affine setting that reveals the meaning of row indices in the asymptotic insertion. As compared to the affine matrix ball construction, it appears that our map and its combinatorial analysis in establishing the bijection is much simpler.
 
 We conjecture that our growth diagram construction has a natural geometric interpretation in terms of Jordan types of restrictions of certain topologically nilpotent operators. Roughly speaking, given two affine flags $E_\bullet$ and $F_\bullet$ in $GL_n(\mathbb{C}((t^{-1})))/I$ (where $I$ is the Iwahori subgroup) interpreted as $\mathbb{Z}$-indexed chains of $\mathbb{C}[[t^{-1}]]$-lattices in relative position $w$, and $\mu$ a \emph{generic} regular semisimple, topologically nilpotent element satisfying $\mu E_i\subset E_{i-1}$ and $\mu F_i\subset F_{i-1}$, we conjecture that the partitions in our growth diagram can be interpreted as the Jordan types of the operator $\mu$ restricted to the finite dimensional spaces $E_i/(E_i\cap F_j)$ for all $i,j\in \mathbb{Z}$. (In fact, it was this conjecture that led us to the discovery of the combinatorial map.) This can be seen as a geometric motivation of the dual perspective: in the affine setting, the intersections $E_i\cap F_j$, which would correspond to the interpretation of the usual Robinson-Schensted, are infinite dimensional $\mathbb{C}$-vector spaces and thus do not have a readily available Jordan theory, whereas the quotients $E_i/(E_i\cap F_j)$ are finite dimensional. This distinction is not essential in the finite case but pronounced in the affine case. In this light, we hope our combinatorial construction provides new tools that are potentially geometrically natural for further  investigation in Kazhdan--Lusztig theory.
 
This paper is structured as follows.  
\Cref{sec:growth} reviews the classical constructions of Fomin and Viennot on growth diagrams for Robinson-Schensted and their relationship that inspires our construction, as well as the dual version. \Cref{sec:affine} gives precise statements of our main theorem (\Cref{thm:main}) and examples and \Cref{sec:proof} gives the proof. \Cref{sec:KL} connects our construction to existing theory of Kazhdan Lusztig cells in affine type $A$ and justifies the adjective``dual'' via affine evacuation (\Cref{thm:dual}). Finally, \Cref{sec:geometry} gives a conjectural geometric interpretation of our construction ({\Cref{conj:main-geometry}).

\noindent\textbf{Acknowledgements.} We thank Pavlo Pylyavskyy, Jake Levinson, Joel Kamnitzer, Pablo Boixeda Alvarez, and Do Kien Hoang for helpful discussions. The figures in this paper were generated by ChatGPT. DH was partially supported by NSF-DMS2202900.

\section{Fomin-Viennot Growth Diagram}
\label{sec:growth}
In this mainly expository section, we recall the classical Robinson-Schensted correspondence and set our conventions, using Fomin's \emph{growth diagram} and Viennot's \emph{shadow line construction}. Throughout the paper, we use matrix coordinates.

\subsection{Growth Diagrams} Given a permutation $w\in S_n$, we construct its \emph{growth diagram} $G_w$ as follows. First, start with an $n\times n$ grid, in which the 
vertices have coordinates $(i,j)$ for $0\le i,j\le n$. 
We call the tile with the four vertices $(i-1,j-1)$, $(i-1,j)$, $(i,j-1)$, $(i,j)$ the $(i,j)$-tile.
The
$(i,j)$-tile is marked with a dot if $w(j)=i$, where $1\le i,j\le n$. This is sometimes called a \emph{rock diagram}.
For example, the rock diagram of $365214$ is

\begin{center}

	\begin{tikzpicture}[scale = 0.75]
		\foreach \i in {0,...,6} {
		\draw (0,\i ) -- (6,\i);
		}
		\foreach \j in {0,...,6}{
		\draw (\j,0) -- (\j,6);
		}
	
		\node () at (1-0.5,4-0.5) {$\bullet$};
		\node () at (2-0.5,1-0.5) {$\bullet$};
		\node () at (3-0.5,2-0.5) {$\bullet$};
		\node () at (4-0.5,5-0.5) {$\bullet$};
		\node () at (5-0.5,6-0.5) {$\bullet$};
		\node () at (6-0.5,3-0.5) {$\bullet$};
\end{tikzpicture}
\end{center}

A growth diagram is, loosely speaking, a way to assign partitions to each vertex of the rock diagram, following a set of local rules.

First, place $\varnothing$ at each vertex of the north and west boundaries. Then in any box, given the northwest, northeast, and southwest partitions, the southeast partition is uniquely determined by the following rules, which generate the entire growth diagram $G_w$.

\begin{definition}[Vertex local rules]\label{def:local_rules}
For an unmarked square \begin{tikzpicture}[scale = 1.2,baseline = -7mm,yscale = -1]
	\draw (0,0) -- (1,0) -- (1,1) -- (0,1) -- cycle;
	\node [fill=white]() at (0,0) {$\lambda_{i,j}$};
	\node [fill=white]() at (1,0) {$\lambda_{i+1,j}$};
	\node [fill=white]() at (1.2,1) {$\lambda_{i+1,j+1}$};
	\node [fill=white]() at (-0.2,1) {$\lambda_{i,j+1}$};
\end{tikzpicture}, we have
\begin{enumerate}
\item If $\lambda_{i,j+1}\neq \lambda_{i+1,j}$, then $\lambda_{i+1,j+1}=\lambda_{i,j+1}\cup \lambda_{i+1,j}$, i.e. \begin{tikzpicture}[scale = 1.2,baseline = -7mm,yscale = -1]
	\draw (0,0) -- (1,0) -- (1,1) -- (0,1) -- cycle;
	\node [fill=white]() at (0,0) {$\lambda$};
	\node [fill=white]() at (1,0) {$\mu_1$};
	\node [fill=white]() at (1.2,1) {$\mu_1\cup \mu_2$};
	\node [fill=white]() at (0,1) {$\mu_2$};
\end{tikzpicture}
As a special case, if $\lambda_{i,j}=\lambda_{i+1,j}$, then $\lambda_{i+1,j+1}=\lambda_{i,j+1}$, i.e. $\begin{tikzpicture}[scale = 1,baseline = -7mm,yscale = -1]
	\draw (0,0) -- (1,0) -- (1,1) -- (0,1) -- cycle;
	\node [fill=white]() at (0,0) {$\lambda$};
	\node [fill=white]() at (1,0) {$\lambda$};
	\node [fill=white]() at (1,1) {$\mu$};
	\node [fill=white]() at (0,1) {$\mu$};
\end{tikzpicture}$

\item If $\lambda_{i,j+1} = \lambda_{i+1,j}\neq \lambda_{i,j}$, then \begin{tikzpicture}[scale = 1,baseline = -7mm,yscale = -1]
	\draw (0,0) -- (1,0) -- (1,1) -- (0,1) -- cycle;
	\node [fill=white]() at (0,0) {$\lambda$};
	\node [fill=white]() at (1,0) {$\mu$};
	\node [fill=white]() at (1,1) {$\lambda'$};
	\node [fill=white]() at (0,1) {$\mu$};
\end{tikzpicture}, where $\lambda'$ is obtained by adding one box to $\lambda$  to the row immediately below $\mu/\lambda$\footnote{In this case, $\mu$ will always have exactly one more box than $\lambda$.}.

\end{enumerate}
For a marked square, we have
\begin{enumerate}
	\item [(3)] \begin{tikzpicture}[scale = 1,baseline = -7mm, yscale = -1]
	\node at (0.5,0.5) {$\bullet$};
	\draw (0,0) -- (1,0) -- (1,1) -- (0,1) -- cycle;
	\node [fill=white]() at (0,0) {$\lambda$};
	\node [fill=white]() at (1,0) {$\lambda$};
	\node [fill=white]() at (1,1) {$\lambda'$};
	\node [fill=white]() at (0,1) {$\lambda$};
\end{tikzpicture}, where $\lambda'$ is obtained by adding one box to the first row of $\lambda$.\end{enumerate}
\end{definition}
Then the bottom (resp. right) boundary of $G_w$ is a nested sequence of partitions, which is equivalent to a standard Young tableau\footnote{Recall that a standard Young tableau (SYT) is a filling of a Young diagram for a partition of $n$ by numbers in $[n]$ such that the numbers are increasing from left to right in each row, and increasing from top to bottom in each column.}, denoted $P(w)$ (resp. $Q(w)$). The map $\RS:w\mapsto (P(w),Q(w))$ is known as the Robinson-Schensted correspondence. 

Geometrically, for $w\in S_n$, if $E_\bullet$ is the standard flag, namely, $E_i=\mathrm{span}_\mathbb{C}\{e_1,\dots, e_i\}$ for $0\le i\le n$ and $F_\bullet$ is the $w$-coordinate flag, namely $F_i=\mathrm{span}_\mathbb{C}\{e_{w(1)},\dots, e_{w(i)}\}$ for $0\le i\le n$, and $\mu$ is a generic nilpotent matrix satisfying $\mu E_i\subseteq E_{i-1}$ and $\mu F_i\subseteq F_{i-1}$ for all $0\le i\le n$, then $(G_w)_{i,j}$ is the Jordan type of $\mu$ restricted to $E_i\cap F_j$. This is explained in \cite[Section 3 and 6]{fominbritz}. A related interpretation is given in \cite{vanLeeuwen_flag}.

\subsection{Shadow Line Construction}

On the other hand, Viennot gave a {``geometric'' construction} of the Robinson-Schensted correspondence, which we call the \emph{shadow line construction}. This is also known as the \emph{matrix ball construction} \cite{fulton}. Here we briefly describe the construction.

Again, we start from the rock diagram of a permutation, and imagine that there is a light that shines from the northwest corner to the southeast. In the first iteration, we start by considering the marks $(j,w(j))$ as poles and cast shadow to the south and east, which will result in a shadow line that is the boundary of the union of the ``rectangular shadows''. After obtaining the first shadow line, we disregard it and repeat the process on the remaining marks until all marks lie on a shadow line. We  then start the second iteration and plant new poles on the corners of the shadow lines constructed in the first iteration that do not have a pole, remove the old poles from the previous iteration, and repeat the process as described in the first iteration. We iterate until no new poles can be planted.
 \Cref{fig:finite_shadow_line} is an example of a shadow line construction. 

\begin{figure}
	\begin{tikzpicture}
	\begin{pgfonlayer}{nodelayer}
		\node [style=none] (0) at (-3, 3) {};
		\node [style=none] (1) at (3, 3) {};
		\node [style=none] (2) at (-3, 2) {};
		\node [style=none] (3) at (3, 2) {};
		\node [style=none] (4) at (-3, 1) {};
		\node [style=none] (5) at (3, 1) {};
		\node [style=none] (6) at (-3, 0) {};
		\node [style=none] (7) at (3, 0) {};
		\node [style=none] (8) at (-3, -1) {};
		\node [style=none] (9) at (3, -1) {};
		\node [style=none] (10) at (-3, -2) {};
		\node [style=none] (11) at (3, -2) {};
		\node [style=none] (12) at (-3, -3) {};
		\node [style=none] (13) at (3, -3) {};
		\node [style=none] (16) at (-3, 3) {};
		\node [style=none] (17) at (-3, -3) {};
		\node [style=none] (18) at (-2, 3) {};
		\node [style=none] (19) at (-2, -3) {};
		\node [style=none] (20) at (-1, 3) {};
		\node [style=none] (21) at (-1, -3) {};
		\node [style=none] (22) at (0, 3) {};
		\node [style=none] (23) at (0, -3) {};
		\node [style=none] (24) at (1, 3) {};
		\node [style=none] (25) at (1, -3) {};
		\node [style=none] (26) at (2, 3) {};
		\node [style=none] (27) at (2, -3) {};
		\node [style=none] (28) at (3, 3) {};
		\node [style=none] (29) at (3, -3) {};
		\node [style=blt] (30) at (-2.5, 0.5) {};
		\node [style=blt] (31) at (-1.5, -2.5) {};
		\node [style=blt] (32) at (-0.5, -1.5) {};
		\node [style=blt] (33) at (0.5, 1.5) {};
		\node [style=blt] (34) at (1.5, 2.5) {};
		\node [style=blt] (35) at (2.5, -0.5) {};
		\node [style=none] (36) at (-2.5, -3) {};
		\node [style=none] (37) at (-1.5, -3) {};
		\node [style=none] (38) at (-0.5, -3) {};
		\node [style=none] (39) at (0.5, -3) {};
		\node [style=none] (40) at (1.5, -3) {};
		\node [style=none] (41) at (2.5, -3) {};
		\node [style=none] (42) at (3, -2.5) {};
		\node [style=none] (43) at (3, -1.5) {};
		\node [style=none] (44) at (3, -0.5) {};
		\node [style=none] (45) at (3, 0.5) {};
		\node [style=none] (46) at (3, 1.5) {};
		\node [style=none] (47) at (3, 2.5) {};
		\node [style=none] (48) at (0.5, 0.5) {};
		\node [style=none] (49) at (1.5, 1.5) {};
		\node [style=none] (50) at (-0.5, -2.5) {};
		\node [style=none] (51) at (2.5, -1.5) {};
		\node [style=none] (52) at (-2.5, 0.5) {};
		\node [style=none] (53) at (0.5, -2.5) {};
		\node [style=none] (54) at (1.5, 0.5) {};
	\end{pgfonlayer}
	\begin{pgfonlayer}{edgelayer}
		\draw (0.center) to (1.center);
		\draw (2.center) to (3.center);
		\draw (4.center) to (5.center);
		\draw (6.center) to (7.center);
		\draw (8.center) to (9.center);
		\draw (10.center) to (11.center);
		\draw (12.center) to (13.center);
		\draw (16.center) to (17.center);
		\draw (18.center) to (19.center);
		\draw (20.center) to (21.center);
		\draw (22.center) to (23.center);
		\draw (24.center) to (25.center);
		\draw (26.center) to (27.center);
		\draw (28.center) to (29.center);
		\draw [style=e1] (36.center) to (52.center);
		\draw [style=e1] (52.center) to (48.center);
		\draw [style=e1] (48.center) to (33);
		\draw [style=e1] (33) to (49.center);
		\draw [style=e1] (49.center) to (34);
		\draw [style=e1] (34) to (47.center);
		\draw [style=e1] (37.center) to (31);
		\draw [style=e1] (31) to (50.center);
		\draw [style=e1] (50.center) to (32);
		\draw [style=e1] (32) to (51.center);
		\draw [style=e1] (51.center) to (35);
		\draw [style=e1] (35) to (44.center);
		\draw [style=e7] (38.center) to (50.center);
		\draw [style=e7] (50.center) to (53.center);
		\draw [style=e7] (53.center) to (48.center);
		\draw [style=e7] (48.center) to (54.center);
		\draw [style=e7] (54.center) to (49.center);
		\draw [style=e7] (49.center) to (46.center);
		\draw [style=e7] (41.center) to (51.center);
		\draw [style=e7] (51.center) to (43.center);
		\draw [style=e3] (40.center) to (54.center);
		\draw [style=e3] (54.center) to (45.center);
	\end{pgfonlayer}
\end{tikzpicture}
	\caption{Example of the shadow line construction}
	\label{fig:finite_shadow_line}
\end{figure}

\subsection{Edge Local Rules}The above-mentioned two approaches of Robinson--Schensted correspondence can be put under one roof using  edge local rules.

If we label the edges of the growth diagram by $i$ if the two partitions on that edge differ by a box in row $i$, the local rules in \Cref{def:local_rules} ensure that only the edge labels as in \Cref
{fig:edge_rule} will show up. These are called the \emph{edge local rules} by X. Viennot. 

\begin{figure}
	\[
\begin{tikzpicture}[tikzfig]
	\begin{pgfonlayer}{nodelayer}
		\node [style=none] (0) at (-9, 3) {};
		\node [style=label-s] (1) at (-8, 3) {$0$};
		\node [style=none] (2) at (-7, 3) {};
		\node [style=label-s] (3) at (-7, 2) {$1$};
		\node [style=none] (4) at (-7, 1) {};
		\node [style=label-s] (5) at (-8, 1) {$1$};
		\node [style=none] (6) at (-9, 1) {};
		\node [style=label-s] (7) at (-9, 2) {$0$};
		\node [style=none] (8) at (-6, 3) {};
		\node [style=label-s] (9) at (-5, 3) {$0$};
		\node [style=none] (10) at (-4, 3) {};
		\node [style=label-s] (11) at (-4, 2) {$0$};
		\node [style=none] (12) at (-4, 1) {};
		\node [style=label-s] (13) at (-5, 1) {$0$};
		\node [style=none] (14) at (-6, 1) {};
		\node [style=label-s] (15) at (-6, 2) {$0$};
		\node [style=none] (16) at (-3, 3) {};
		\node [style=label-s] (17) at (-2, 3) {$a$};
		\node [style=none] (18) at (-1, 3) {};
		\node [style=label-s] (19) at (-1, 2) {$a+1$};
		\node [style=none] (20) at (-1, 1) {};
		\node [style=label-s] (21) at (-2, 1) {$a+1$};
		\node [style=none] (22) at (-3, 1) {};
		\node [style=label-s] (23) at (-3, 2) {$a$};
		\node [style=none] (24) at (0, 3) {};
		\node [style=label-s] (25) at (1, 3) {$a$};
		\node [style=none] (26) at (2, 3) {};
		\node [style=label-s] (27) at (2, 2) {$b$};
		\node [style=none] (28) at (2, 1) {};
		\node [style=label-s] (29) at (1, 1) {$a$};
		\node [style=none] (30) at (0, 1) {};
		\node [style=label-s] (31) at (0, 2) {$b$};
		\node [style=particle] (32) at (-8, 2) {};
		\node [style=label] (33) at (-2, 0.25) {$a\neq 0$};
		\node [style=label] (34) at (1, 0.25) {$a\neq b$};
	\end{pgfonlayer}
	\begin{pgfonlayer}{edgelayer}
		\draw (0.center) to (1);
		\draw (1) to (2.center);
		\draw (2.center) to (3);
		\draw (3) to (4.center);
		\draw (4.center) to (5);
		\draw (5) to (6.center);
		\draw (6.center) to (7);
		\draw (7) to (0.center);
		\draw (8.center) to (9);
		\draw (9) to (10.center);
		\draw (10.center) to (11);
		\draw (11) to (12.center);
		\draw (12.center) to (13);
		\draw (13) to (14.center);
		\draw (14.center) to (15);
		\draw (15) to (8.center);
		\draw (16.center) to (17);
		\draw (17) to (18.center);
		\draw (18.center) to (19);
		\draw (19) to (20.center);
		\draw (20.center) to (21);
		\draw (21) to (22.center);
		\draw (22.center) to (23);
		\draw (23) to (16.center);
		\draw (24.center) to (25);
		\draw (25) to (26.center);
		\draw (26.center) to (27);
		\draw (27) to (28.center);
		\draw (28.center) to (29);
		\draw (29) to (30.center);
		\draw (30.center) to (31);
		\draw (31) to (24.center);
	\end{pgfonlayer}
\end{tikzpicture}
\]
\caption{Edge local rules.}
\label{fig:edge_rule}
\end{figure}

On the other hand, going from the shadow line configuration, we label an edge of the rock diagram by $i$ if there is a shadow line from the $i$-th step that goes through the edge. The edge labels obtained in this way are exactly the same as those determined by the edge local rules \cite{viennot_slides}. This then establishes an equivalence between the two approaches. In \Cref{fig:fomingrowth}, we show the RS correspondence for $w=365214$, $w\mapsto (P(w),Q(w))$,
where $P(w)=\ytableaushort{14, 25,3,6}$ and $Q(w)=\ytableaushort{12,36,4,5}$.

\begin{figure}
\resizebox{0.65\linewidth}{0.65\linewidth}{
	\input{fomingrowth.tex}}

	\caption{Robinson-Schensted via vertex and edge local rules}
	\label{fig:fomingrowth}
\end{figure}

The edge local rules have a natural algebraic interpretation using Fomin's Schur operators.
In \cite{fomin}, Fomin introduced a set of operators on Young's lattice, defined as follows.
$$u_i(\lambda)=\begin{cases}
	\lambda \cup\{\text{a box in row }i\}&\text{ if the result is a partition,}\\
	\varnothing &\text{ otherwise.}
\end{cases}$$
$$d_i(\lambda)=\begin{cases}
	\lambda -\{\text{a box in row }i\}&\text{ if the result is a partition,}\\
	\varnothing&\text{ otherwise.}
\end{cases}$$

\begin{prop}[\cite{fomin}]\label{prop:small_u}
	The $u_i$'s and $d_i$'s satisfy the following relations.
	\begin{align*}
	d_1u_1 &= \textup{id}\\
	d_ju_i&=u_{i}d_j\quad i\neq j\\
		d_{i+1}u_{i+1}&=u_id_i.
	\end{align*}
\end{prop}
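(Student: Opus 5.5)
We prove the three relations by applying both sides to an arbitrary partition $\lambda$, with the convention that all operators send $\varnothing$ (the zero result) to $\varnothing$. Everything reduces to describing when a box can be added to or removed from a given row. Write $\lambda=(\lambda_1,\lambda_2,\dots)$ with $\lambda_0:=\infty$. Then:
\begin{itemize}
\item $u_i(\lambda)\neq\varnothing$ exactly when $\lambda_{i-1}>\lambda_i$;
\item $d_i(\lambda)\neq\varnothing$ exactly when $\lambda_i>\lambda_{i+1}$.
\end{itemize}
When nonzero, each operator changes only the $i$-th part by $\pm1$. So for each relation we check that both sides are nonzero under the same inequalities on $\lambda$, and that they then yield the same partition. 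The second check is automatic: both sides change the same parts by the same net amounts.

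\textbf{First relation.} $u_1$ is always defined, since $\lambda_0=\infty$. After adding the box, the first row has length $\lambda_1+1>\lambda_2$, so $d_1$ removes it again. Hence $d_1u_1(\lambda)=\lambda$.

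\textbf{Second relation, $i\neq j$.} If $|i-j|\geq 2$, the conditions for $u_i$ involve only $\lambda_{i-1},\lambda_i$, and those for $d_j$ involve only $\lambda_j,\lambda_{j+1}$. Applying one operator does not alter the parts the other looks at, so the two orders agree.

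The adjacent cases need a direct check.
\begin{itemize}
\item $j=i-1$. For $u_i d_{i-1}$ we need $\lambda_{i-1}>\lambda_i$ (for $d_{i-1}$) and then $\lambda_{i-1}-1>\lambda_i$ (for $u_i$). For $d_{i-1}u_i$ we need $\lambda_{i-1}>\lambda_i$ and then $\lambda_{i-1}>\lambda_i+1$. Both amount to $\lambda_{i-1}\geq\lambda_i+2$.
\item $j=i+1$. For $u_id_{i+1}$ we need $\lambda_{i-1}>\lambda_i$ and $\lambda_{i+1}>\lambda_{i+2}$. Removing from row $i+1$ does not affect $u_i$'s condition, and adding to row $i$ does not affect $d_{i+1}$'s condition, so $d_{i+1}u_i$ has the same conditions.
\end{itemize}

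\textbf{Third relation, $d_{i+1}u_{i+1}=u_id_i$.} Both sides fix every part of $\lambda$, so each side is either $\lambda$ or $\varnothing$.
\begin{itemize}
\item The left side is nonzero iff $\lambda_i>\lambda_{i+1}$ (for $u_{i+1}$) and then $\lambda_{i+1}+1>\lambda_{i+2}$, which is automatic. So it is nonzero iff $\lambda_i>\lambda_{i+1}$.
\item The right side is nonzero iff $\lambda_i>\lambda_{i+1}$ (for $d_i$) and then $\lambda_{i-1}>\lambda_i-1$, which is automatic. So it is also nonzero iff $\lambda_i>\lambda_{i+1}$.
\end{itemize}
For $i\geq1$ the conditions coincide, and the relation holds.

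The only step requiring care is the adjacent case $|i-j|=1$ of the second relation. There the two orders of composition must be shown to impose identical inequalities on $\lambda$, as checked above.
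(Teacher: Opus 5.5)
Your proof is correct and complete. The nonvanishing criteria ($u_i(\lambda)\neq 0$ iff $\lambda_{i-1}>\lambda_i$, and $d_i(\lambda)\neq 0$ iff $\lambda_i>\lambda_{i+1}$, with $\lambda_0=\infty$) are right, and your case analysis is exhaustive: the non-adjacent case $|i-j|\ge 2$, the two adjacent cases $j=i\pm1$, and the two one-row relations.

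The paper gives no proof of its own; it only cites Fomin, where these are linear operators on the vector space spanned by partitions. Your elementary case check is therefore a self-contained substitute for that citation.

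You were right to make explicit that the failure value is an absorbing zero. The paper writes this value as $\varnothing$, the same symbol it uses for the empty partition, on which $u_1$ acts nontrivially. Read literally with $\varnothing$ meaning the empty partition, the third relation fails: $u_1d_1(\varnothing)=u_1(\varnothing)=(1)$, while $d_2u_2(\varnothing)=\varnothing$. So your convention is what makes the statement true, not merely a notational choice.
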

The edge local rules can be thought of as the commutative diagrams for the relations of these operators, where a vertical (resp. horizontal) edge labeled by $i$ corresponds to the operator $u_i$ (resp. $d_i$). Then the three  edge local rules precisely match the above relations of the Schur operators.

\subsection{Dual RS}
Here we define a ``dual'' RS correspondence, by taking the shadow line from a different direction. This construction will be useful in future sections, and its name is parallel to that of dual affine RS correspondence.

More precisely, we define \emph{dual growth diagram} using dual edge local rules from the same rock diagram. The dual edge local rules are listed as follows (Figure~\ref{fig:dual_edge_rule}).
\begin{figure}[h]
	\[
\begin{tikzpicture}[tikzfig]
	\begin{pgfonlayer}{nodelayer}
		\node [style=none] (0) at (-9, 3) {};
		\node [style=label-s] (1) at (-8, 3) {$0$};
		\node [style=none] (2) at (-7, 3) {};
		\node [style=label-s] (3) at (-7, 2) {$0$};
		\node [style=none] (4) at (-7, 1) {};
		\node [style=label-s] (5) at (-8, 1) {$1$};
		\node [style=none] (6) at (-9, 1) {};
		\node [style=label-s] (7) at (-9, 2) {$1$};
		\node [style=none] (8) at (-6, 3) {};
		\node [style=label-s] (9) at (-5, 3) {$0$};
		\node [style=none] (10) at (-4, 3) {};
		\node [style=label-s] (11) at (-4, 2) {$0$};
		\node [style=none] (12) at (-4, 1) {};
		\node [style=label-s] (13) at (-5, 1) {$0$};
		\node [style=none] (14) at (-6, 1) {};
		\node [style=label-s] (15) at (-6, 2) {$0$};
		\node [style=none] (16) at (-3, 3) {};
		\node [style=label-s] (17) at (-2, 3) {$a$};
		\node [style=none] (18) at (-1, 3) {};
		\node [style=label-s] (19) at (-1, 2) {$a$};
		\node [style=none] (20) at (-1, 1) {};
		\node [style=label-s] (21) at (-2, 1) {$a+1$};
		\node [style=none] (22) at (-3, 1) {};
		\node [style=label-s] (23) at (-3, 2) {$a+1$};
		\node [style=none] (24) at (0, 3) {};
		\node [style=label-s] (25) at (1, 3) {$a$};
		\node [style=none] (26) at (2, 3) {};
		\node [style=label-s] (27) at (2, 2) {$b$};
		\node [style=none] (28) at (2, 1) {};
		\node [style=label-s] (29) at (1, 1) {$a$};
		\node [style=none] (30) at (0, 1) {};
		\node [style=label-s] (31) at (0, 2) {$b$};
		\node [style=particle] (32) at (-8, 2) {};
		\node [style=label] (33) at (-2, 0.25) {$a\neq 0$};
		\node [style=label] (34) at (1, 0.25) {$a\neq b$};
	\end{pgfonlayer}
	\begin{pgfonlayer}{edgelayer}
		\draw (0.center) to (1);
		\draw (1) to (2.center);
		\draw (2.center) to (3);
		\draw (3) to (4.center);
		\draw (4.center) to (5);
		\draw (5) to (6.center);
		\draw (6.center) to (7);
		\draw (7) to (0.center);
		\draw (8.center) to (9);
		\draw (9) to (10.center);
		\draw (10.center) to (11);
		\draw (11) to (12.center);
		\draw (12.center) to (13);
		\draw (13) to (14.center);
		\draw (14.center) to (15);
		\draw (15) to (8.center);
		\draw (16.center) to (17);
		\draw (17) to (18.center);
		\draw (18.center) to (19);
		\draw (19) to (20.center);
		\draw (20.center) to (21);
		\draw (21) to (22.center);
		\draw (22.center) to (23);
		\draw (23) to (16.center);
		\draw (24.center) to (25);
		\draw (25) to (26.center);
		\draw (26.center) to (27);
		\draw (27) to (28.center);
		\draw (28.center) to (29);
		\draw (29) to (30.center);
		\draw (30.center) to (31);
		\draw (31) to (24.center);
	\end{pgfonlayer}
\end{tikzpicture}
\]
\caption{Dual edge local rules.}
\label{fig:dual_edge_rule}
	
\end{figure}
To construct a dual growth diagram for $w\in S_n$ using these edge local rules, we start with the rock diagram for $w$ and label each edge on the north and east boundary 0. The rest of the edge labels are uniquely determined by the edge local rules.

We associate a partition to the vertices of the growth diagram as follows.
\begin{enumerate}
	\item The north and east boundary vertices are $\emptyset$'s.
	\item If the edge $(i-1,j),(i,j)$ is labeled by $k$, then $\Gamma_w(i-1,j)$ is obtained from $\Gamma_w(i,j)$ by adding a box to the $k$-th column.
	\item  If the edge $(i,j-1),(i,j)$ is labeled by $k$, then $\Gamma_w(i,j-1)$ is obtained from $\Gamma_w(i,j)$ by adding a box to the $k$-th column.
\end{enumerate}
We will then read off the $P$-tableau from the west boundary, from top to bottom. The $Q$-tableau is obtained from the south boundary, reading from \emph{right} to \emph{left}. This defines a map $\RS':w\mapsto(P'(w), Q'(w))$.

\begin{figure}
	\resizebox{0.65\linewidth}{0.65\linewidth}{
	\input{dualgrowth.tex}}
	\caption{Dual growth diagram}
	\label{fig:dual_growth}
\end{figure}
For $w=365214$, $\RS':w\mapsto (P'(w),Q'(w))$,
where $P'(w)=\ytableaushort{14, 25,3,6}$ and $Q'(w)=\ytableaushort{12,36,4,5}$ \footnote{We note that for this $w$ we have $Q(w)=Q'(w)$,  because this tableau is self-dual under the evacuation map.}.

\begin{prop}\label{prop:dual_growth}
	Let $\RS(w)=(P(w),Q(w))$ and $\RS'(w)=(P'(w),Q'(w))$ denote RS and dual RS map, respectively. Then we have $P(w)=P'(w)$ and $Q(w) = \textup{evac}(Q'(w))$, where $\textup{evac}$ is the evacuation operation as defined in \cite[Definition A1.2.8]{ec2}.
\end{prop}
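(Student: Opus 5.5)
The plan is to reduce the dual construction to the ordinary one by a reflection of the rock diagram, and then to invoke the classical symmetry of RS under reversal of a permutation.

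\textbf{Step 1: the dual rules are the mirror image of the ordinary rules.} Compare \Cref{fig:dual_edge_rule} with \Cref{fig:edge_rule}. Each dual rule is the corresponding ordinary rule reflected across a vertical axis, i.e.\ with the west and east edges interchanged.
\begin{itemize}
\item Marked tile: ordinary labels are north $0$, west $0$, east $1$, south $1$; dual labels are north $0$, east $0$, west $1$, south $1$.
\item The $a/a+1$ rule: ordinary labels are north $a$, west $a$, east $a+1$, south $a+1$; dual labels are north $a$, east $a$, west $a+1$, south $a+1$.
\item The other two rules are symmetric under this reflection.
\end{itemize}
The boundary data also match: the dual diagram starts from $0$'s on the north and east boundaries, which reflect to the north and west boundaries where the ordinary diagram starts.

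Let $w_0$ be the longest element. Reflecting the $n\times n$ rock diagram of $w$ left–right gives the rock diagram of $ww_0$, since $ww_0(n+1-j)=w(j)$. Both edge labellings are uniquely determined by their local rules and boundary data. Hence the dual edge labelling of $w$ is exactly the reflection of Viennot's ordinary edge labelling of $ww_0$.

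\textbf{Step 2: translate to vertex partitions.} In the dual diagram, a label $k$ means a box is added in column $k$; in the ordinary diagram it means a box is added in row $k$. Both diagrams start from $\varnothing$ on corresponding boundaries. Therefore
\[
\Gamma_w(i,j)=\bigl((G_{ww_0})_{i,\,n-j}\bigr)^{T}
\]
for all $i,j$, which is checked by induction along the boundary edges.

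\textbf{Step 3: read off the boundaries.}
\begin{itemize}
\item The west boundary of $\Gamma_w$, read top to bottom, is the transpose of the east-boundary chain of $G_{ww_0}$. So $P'(w)$ is the transpose of the tableau that $G_{ww_0}$ records on its right boundary.
\item The south boundary of $\Gamma_w$, read right to left, is the transpose of the south-boundary chain of $G_{ww_0}$ read left to right, i.e.\ the transpose of the other tableau of $ww_0$.
\end{itemize}
Thus $(P'(w),Q'(w))$ is obtained from $\RS(ww_0)$ by transposing both tableaux, possibly after swapping their roles according to the conventions fixed in \Cref{sec:growth}.

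\textbf{Step 4: apply Schützenberger's reversal theorem.} Reversing a word conjugates the insertion tableau and sends the recording tableau to the conjugate of its evacuation \cite[Appendix A1.2]{ec2}. Transposing back then yields $P'(w)=P(w)$ and $Q'(w)=\evac(Q(w))$. Since evacuation is an involution, this is equivalent to $Q(w)=\evac(Q'(w))$.

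If one prefers to stay inside growth diagrams, the same conclusion follows from Fomin's theorem that rotating a growth diagram by $180^\circ$ evacuates both boundary tableaux, combined with the transpose symmetry of the local rules. In that version one replaces $ww_0$ by $w_0ww_0$ and composes with the reflection $w\mapsto w^{-1}$.

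\textbf{Main obstacle.} The only real difficulty is bookkeeping. In Step 3 one must pin down which boundary of $G_{ww_0}$ carries which tableau under the paper's matrix-coordinate conventions, where the bottom boundary gives $P$ and the right boundary gives $Q$. One must also check that the reversal theorem, applied in that convention, lands on $P'(w)=P(w)$ and not on $P(w)^T$ or on $Q(w)$.

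I would first confirm this on the running example $w=365214$, where \Cref{fig:fomingrowth} and \Cref{fig:dual_growth} already give $P=P'$. I would also test a non-self-evacuating example such as $w=132$ before fixing the final chain of identifications.
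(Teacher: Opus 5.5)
Your argument is the paper's: the dual labelling of $w$ is the left--right mirror of Viennot's labelling of $ww_0$, reading labels as columns rather than rows transposes every vertex partition, and Sch\"utzenberger's reversal theorem $\RS(ww_0)=(P(w)^t,\evac(Q(w))^t)$ finishes the proof. Your Steps 1--2 spell out the rule-by-rule check that the paper leaves implicit.

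What is missing is the identification you defer in Step 3 (``possibly after swapping their roles''). It is not cosmetic. The other choice gives $P'(w)=\evac(Q(w))$ and $Q'(w)=P(w)$, which already fails for $w=365214$: there $\evac(Q(w))=Q(w)$, which differs from $P'(w)$.

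Settle it intrinsically. The vertex $(i,j)$ of $G_v$ carries the shape of the insertion tableau of the marks northwest of it, i.e.\ the subword of $v(1)\cdots v(j)$ consisting of values $\le i$.
\begin{itemize}
\item The right boundary, read top to bottom, is then the chain of shapes of $P(v)$ restricted to entries $\le i$. So it records the insertion tableau $P(v)$.
\item The bottom boundary, read left to right, is the chain of shapes of $P(v(1)\cdots v(j))$. So it records the recording tableau $Q(v)$.
\end{itemize}
In particular, do not import the sentence of Section 2 that attaches $P$ to the bottom boundary. With the matrix-coordinate rock diagram (the mark of column $j$ in row $w(j)$), that assignment is transposed. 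The paper's own example confirms this: its $P(365214)$, with rows $14,25,3,6$, is the insertion tableau, which sits on the right boundary.

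With this identification, your Step 3 gives $P'(w)=P(ww_0)^t=P(w)$ and $Q'(w)=Q(ww_0)^t=\evac(Q(w))$. Hence $Q(w)=\evac(Q'(w))$, since $\evac$ is an involution.
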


\begin{proof}
	The edge labels of the dual growth diagram for $w$ are the horizontal flip of the classical growth diagram of $ww_0$, where $w_0$ is the longest element in $S_n$. Interpreting the edge labels via row versus column operations corresponds to transposing the diagram. The claim then follows from the fact that $\RS(ww_0)=(P^t, \operatorname{evac}(Q)^t)$, see \cite[Corollary 1.2.11]{ec2}.
\end{proof}

\section{Affine Growth Diagram}
\label{sec:affine}
Let $\tilde S_n$ be the extended affine Weyl group of type $A_{n-1}$. Namely,
\[\tilde{S}_n=\{w:\mathbb{Z}\to \mathbb{Z}\ \big|\ w\text{ is a bijection and }w(i+n)=w(i)+n\text{ for all }i\}.\]
For $w\in \tilde S_n$, we may specify $w$ by the window notation, $w=[w(1),\dots, w(n)]$.
Also let 
\[\tilde{S}^{(i)}_n\coloneqq\{w\in \tilde{S}_n:\sum_{j=1}^nw(j)-j=ni\}.\]
We define the growth diagram of $w$, denoted $\Gamma_w$, to be the $\mathbb{Z}\times\mathbb{Z}$ grid with the following additional structures. As before, we index the square given by the four vertices $(i-1,j-1),(i-1,j),(i,j-1),(i,j)$ to be the $(i,j)$-tile, and let the $(i,j)$-tile be marked if $w(j)=i$.
	
	If $u,v$ are two adjacent vertices in $\Gamma_w$, we use the notation $[u,v]$ to denote the edge between $u$ and $v$. Similarly, if $u$ and $v$ are vertices that lie on the same horizontal or vertical line, we use the notation $[u,v]$ to denote the set of edges between $u$ and $v$.

The construction of the growth diagram consists of two steps: we first use the Fomin-Viennot edge local rule to construct the edge-labeled growth diagram along with the uncolored shadow lines, then in the second step we introduce a rule to assign colors to the shadow lines, which will enable translation between growth diagrams and tableaux.

\subsection{Affine growth diagram and  shadow lines}
Each edge of the growth diagram $\Gamma_w$ will be labeled by a nonnegative integer as follows.
\begin{enumerate}
	\item An edge $[(i-1,j-1),(i-1,j)]$ or $[(i-1,j),(i,j)]$ is labeled 0 if there is no mark in any $(i',j')$-tile for all $i'\le i$, $j'\ge j$. 	
	\textcolor{blue}{}
	\item The other tiles are filled iteratively, using the same edge local rule as in Figure~\ref{fig:dual_edge_rule}.

	 \item For each tile whose edges are not all 0, connect the midpoints of edges with the same label:
	 \[\begin{tikzpicture}[tikzfig]
	\begin{pgfonlayer}{nodelayer}
		\node [style=none] (0) at (-9, 3) {};
		\node [style=label-s] (1) at (-8, 3) {$0$};
		\node [style=none] (2) at (-7, 3) {};
		\node [style=label-s] (3) at (-7, 2) {$0$};
		\node [style=none] (4) at (-7, 1) {};
		\node [style=label-s] (5) at (-8, 1) {$1$};
		\node [style=none] (6) at (-9, 1) {};
		\node [style=label-s] (7) at (-9, 2) {$1$};

		\node [style=none] (8) at (-6, 3) {};
		\node [style=label-s] (9) at (-5, 3) {$0$};
		\node [style=none] (10) at (-4, 3) {};
		\node [style=label-s] (11) at (-4, 2) {$0$};
		\node [style=none] (12) at (-4, 1) {};
		\node [style=label-s] (13) at (-5, 1) {$0$};
		\node [style=none] (14) at (-6, 1) {};
		\node [style=label-s] (15) at (-6, 2) {$0$};

		\node [style=none] (16) at (-3, 3) {};
		\node [style=label-s] (17) at (-2, 3) {$a$};
		\node [style=none] (18) at (-1, 3) {};
		\node [style=label-s] (19) at (-1, 2) {$a$};
		\node [style=none] (20) at (-1, 1) {};
		\node [style=label-s] (21) at (-2, 1) {$a+1$};
		\node [style=none] (22) at (-3, 1) {};
		\node [style=label-s] (23) at (-3, 2) {$a+1$};

		\node [style=none] (24) at (0, 3) {};
		\node [style=label-s] (25) at (1, 3) {$a$};
		\node [style=none] (26) at (2, 3) {};
		\node [style=label-s] (27) at (2, 2) {$b$};
		\node [style=none] (28) at (2, 1) {};
		\node [style=label-s] (29) at (1, 1) {$a$};
		\node [style=none] (30) at (0, 1) {};
		\node [style=label-s] (31) at (0, 2) {$b$};

		\node [style=particle] (32) at (-8, 2) {};
		\node [style=label] (33) at (-2, 0.25) {$a\neq 0$};
		\node [style=label] (34) at (1, 0.25) {$a\neq b$};
	\end{pgfonlayer}

	\begin{pgfonlayer}{edgelayer}
		\draw (0.center) -- (1) -- (2.center) -- (3) -- (4.center) -- (5) -- (6.center) -- (7) -- (0.center);
		\draw (8.center) -- (9) -- (10.center) -- (11) -- (12.center) -- (13) -- (14.center) -- (15) -- (8.center);
		\draw (16.center) -- (17) -- (18.center) -- (19) -- (20.center) -- (21) -- (22.center) -- (23) -- (16.center);
		\draw (24.center) -- (25) -- (26.center) -- (27) -- (28.center) -- (29) -- (30.center) -- (31) -- (24.center);

		-> right
		\draw (7) arc (90:0:1);     


		\draw (17) arc (180:270:1); 
		\draw (23) arc (90:0:1);    

		\draw (25) -- (29);         
		\draw (31) -- (27);         
	\end{pgfonlayer}
\end{tikzpicture}
\]
\vskip -0.9em
\noindent We call the third displayed tile a \emph{bump tile} and the fourth a \emph{cross tile}. 
	 
\end{enumerate}
A \emph{shadow line} is an infinite line that runs in the northwest-southeast direction formed by connected straight lines or arcs in a filled affine growth  diagram. 
Given a shadow line $s$, let $\L(s)$ denote its label. If a shadow $s$ line passes through the $(i,j)$-tile, we say that the $(i,j)$-tile \emph{supports} $s$.
We will also associate a partition (Young diagram) to each vertex of the growth diagram as follows. Denote this partition $\Gamma_w(i,j)$.
\begin{enumerate}
	\item $\Gamma_{i,j}=\emptyset$ if there does not exist a marked tile indexed by $(a,b)$ with $a\leq i$ and $b\ge j$.
	\item If the edge $[(i,j),(i,j-1)]$ (resp. $[(i,j),(i-1,j)]$) is labeled by $k$, then $\Gamma(i,j)$ is the partition obtained by adding a box to $\Gamma(i,j-1)$ (resp. $\Gamma(i-1,j)$) at the $k$-th column.
\end{enumerate}

\begin{definition}
\label{def:col-label}
	Let $w\in\tilde S_n$, and denote $\lambda_{i,j}=\Gamma_w(i,j)$. 
	 For a fixed $j\in\mathbb{Z}$ and each $i\in\mathbb{Z}$, the sequence of partitions $\{\lambda_{t,j}\}_{t\leq i}$ determines a tableau $P_{i,j}=P_{i,j}(w)$ as follows: 
	 let $e_k$ be the $k$-th edge with a nonzero label in $[(-\infty,j),(i,j)]$ and suppose $\L(e_k)=a$, then
the tableau $P_{i,j}$ has the defining property that $\col_{P_{i,j}}(k)=a$.
\end{definition}

\begin{lemma}
\label{lem:affine_insertion}
	Let $(w_k)_{k>j,w_k\leq i}$ denote the finite word obtained by reading $w(j+1),w(j+2),\dots$, ignoring the entries with value greater than $i$. Then $P_{i,j}(w)$ agrees with the insertion tableau $P((w_k)_{k> j,w_k\leq i})$.
	For each $j$, the infinite tableau $\vec P_{j}(w):=\lim_{i\to\infty}P_{i,j}(w)$ is the same as the tableau obtained by inserting the semi-infinite word $(w_k)_{k>j}$. 
\end{lemma}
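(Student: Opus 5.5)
The plan is to reduce to the finite dual growth diagram of \Cref{prop:dual_growth}. Fix $i,j$ and look at the quadrant $\{(a,b): a\le i,\ b\ge j\}$ of $\Gamma_w$. Only finitely many marked tiles $(w(k),k)$ with $k>j$, $w(k)\le i$ lie in this quadrant near the relevant region. The point is that the edge labels on the column segment $[(-\infty,j),(i,j)]$ depend only on the marks in the quadrant lying northeast of those edges, that is, on the marks $(w(k),k)$ with $k>j$ and $w(k)\le i$.

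\textbf{Step 1 (locality).} The edge local rules of \Cref{fig:dual_edge_rule} propagate information only from north and east to south and west. The labels are initialized as $0$ on edges with no mark weakly northeast. Hence, by induction in the order in which the tiles get filled, the label of any edge $e$ is determined by the marked tiles in the northeast quadrant of $e$. In particular, the labels on $[(-\infty,j),(i,j)]$ are unchanged if we delete every mark outside $\{k>j,\ w(k)\le i\}$.

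\textbf{Step 2 (finite reduction).} After this deletion, only finitely many marks remain; call this set $S$. Their columns $k$ form a finite set, and their rows $w(k)$ are distinct. Compress away the empty rows and columns, which changes no labels since an empty row or column simply transports labels through cross tiles or $0$-tiles. The result is the rock diagram of a permutation $\sigma$ in standardized form, and its dual growth diagram is exactly the restricted affine one. This uses the standing fact that an infinite stretch of unmarked tiles to the north or east carries label $0$, which matches the finite boundary condition.

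\textbf{Step 3 (read off).} By \Cref{prop:dual_growth}, the labels on the west boundary, read top to bottom, give $P'(\sigma)=P(\sigma)$. A label $a$ on the $k$-th nonzero edge means that entry $k$ (after standardization, the $k$-th smallest value) sits in column $a$. Concretely, the west boundary of the compressed diagram is the column $j$ segment, with its nonzero edges at the rows $w(k)$. This is precisely the defining property $\col_{P_{i,j}}(k)=a$ of \Cref{def:col-label}, after unstandardizing. Hence $P_{i,j}(w)=P((w_k)_{k>j,w_k\le i})$.

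\textbf{Step 4 (limit).} As $i\to\infty$, the words $(w_k)_{k>j,w_k\le i}$ increase. The labels on $[(-\infty,j),(i,j)]$ agree with those on $[(-\infty,j),(i+1,j)]$ along the shared segment, since the column labels of $P_{i,j}$ and $P_{i+1,j}$ are computed from the same edges. So the tableaux $P_{i,j}$ stabilize entrywise, and the limit $\vec P_j(w)$ is well defined. It agrees with the insertion of the semi-infinite word, which is itself defined as the limit of the insertions of its truncations by values $\le i$.

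\textbf{Main obstacle.} I expect the hardest part to be Step 1 together with the affine well-definedness: showing that the iterative filling in the infinite grid really is determined by the northeast data, with the $0$-initialization consistent, so that the finite comparison is legitimate. I would first try an induction on the number of marks in the northeast quadrant of the edge.

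There is also the subtlety that the inserted word is read in the order $w(j+1),w(j+2),\dots$, while \Cref{prop:dual_growth} reads $P'$ from the west boundary. The orientation conventions (west boundary, column labels) are designed so that $P'$ equals row-insertion of $\sigma$ read left to right. I would verify this on the example $365214$ to fix signs and orientation.
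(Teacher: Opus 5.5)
Your proposal is correct and follows essentially the same route as the paper: restrict to the quadrant $\Gamma^{(i,j)}$ of vertices $(a,b)$ with $a\le i$, $b\ge j$, remove the empty rows and columns to obtain a finite dual growth diagram, apply \Cref{prop:dual_growth}, and pass to the limit $i\to\infty$. Your additional details fill in steps the paper leaves implicit: locality of the edge rules, empty rows and columns acting as pass-throughs, standardization of the values, and stability of the indexing of nonzero edges as $i$ grows.
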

\begin{proof}
Given $(i,j)$, consider the subdiagram of $\Gamma_w$ consisting of vertices of index $(a,b)$ with $a\le i$ and $b\ge j$. Denote this diagram $\Gamma^{(i,j)}$. Note that it consists of marked boxes $(a,b)$ when $w(b)=a$ with $b> j$ and $a\leq i$. Since edge labels of the growth diagram have infinitely many $0$'s from the north and to the east, the semi-infinite growth diagram $\Gamma^{(i,j)}$ is equivalent to a finite dual growth diagram after removing the empty rows and columns. Since the dual affine growth diagram uses the same `dual growth rules', by \Cref{prop:dual_growth}, we have that $P_{i,j}(w)=P((w_k)_{k> j,w_k\leq i})$, where $P$ denotes the classical insertion tableau. Then 
\[\vec{P}_j=\lim_{i\to\infty}P_{i,j}=\lim_{i\to\infty}P((w_k)_{k> j,w_k\leq i})=P((w_k)_{k> j})\qedhere\]
\end{proof}

\begin{corollary}
	There are at most $n$ shadow lines labelled with 1.
\end{corollary}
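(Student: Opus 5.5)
We want to show there are at most $n$ shadow lines labelled $1$. The idea is to count, on a single horizontal line $j$ of the grid, how many edges carry the label $1$. Every shadow line runs northwest to southeast across the whole diagram, so it must cross the vertical line through column $j$. More precisely, a label-$1$ shadow line enters from the west or north and exits toward the east or south. Since it is infinite in the NW–SE direction, it meets the column line $\{(t,j): t\in\mathbb{Z}\}$ in at least one vertical edge $[(t-1,j),(t,j)]$. Distinct shadow lines pass through distinct edges, because each edge has one label and one strand through it. So it suffices to show that, for a suitable $j$, the column $[(-\infty,j),(\infty,j)]$ has at most $n$ edges labelled $1$.

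By \Cref{def:col-label}, the edges labelled $1$ in $[(-\infty,j),(i,j)]$ correspond exactly to the entries of $P_{i,j}(w)$ lying in column $1$. By \Cref{lem:affine_insertion}, $P_{i,j}(w)$ is the insertion tableau of the finite word $(w_k)_{k>j,\,w_k\le i}$. Greene's theorem (or just the basic property of Schensted insertion) says that the length of the first column of $P(u)$ equals the length of the longest strictly decreasing subsequence of $u$. So it suffices to bound the length of a decreasing subsequence of $(w(k))_{k>j}$ by $n$.

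Suppose $k_1<k_2<\dots<k_{n+1}$ with $w(k_1)>\dots>w(k_{n+1})$. By pigeonhole, two of the indices are congruent mod $n$, say $k_b=k_a+mn$ with $m\ge1$. Then $w(k_b)=w(k_a)+mn>w(k_a)$, which contradicts $w(k_a)>w(k_b)$. Hence the first column of every $P_{i,j}$ has at most $n$ boxes, uniformly in $i$. Letting $i\to\infty$, the whole column line at $j$ carries at most $n$ edges labelled $1$.

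The main point needing care is the first step: that each label-$1$ shadow line actually crosses a given vertical grid line, and that distinct lines use distinct edges there. A shadow line travels monotonically (each tile moves it south or east, or straight through). If some label-$1$ line failed to cross column $j$, it would lie entirely to one side of it. It cannot lie entirely to the west, because far to the northeast all labels are $0$, so every label-$1$ line must eventually pass east. Once this is settled, the conclusion follows.
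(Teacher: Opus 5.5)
Your proof follows the paper's. You count label-$1$ edges on the vertical line $[(-\infty,j),(\infty,j)]$ as the first column of $P_{i,j}(w)$, bound the longest decreasing subsequence of $(w_k)_{k>j}$ by $n$ (your pigeonhole argument spells out the paper's ``since $w\in\tilde S_n$''), and use that every shadow line crosses every vertical line, which the paper simply asserts.

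Your last paragraph, however, attaches its justification to the wrong case. The vanishing of labels far to the northeast rules out a label-$1$ line staying \emph{east} of column $j$, not west: such a line would eventually run due north in one tile column, where all horizontal edges are labelled $0$. A line staying west would instead have to run due south forever in some tile column $c$. That is excluded because, for $r\gg 0$, the first columns of $P_{r,c-1}(w)$ and $P_{r,c}(w)$ both have length equal to the longest decreasing subsequence length of $(w_k)_{k>j}$. This length is nonincreasing and $n$-periodic in $j$, hence constant, so the edge $[(r,c-1),(r,c)]$ cannot carry label $1$.
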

\begin{proof}
By Definition~\ref{def:col-label} and Lemma~\ref{lem:affine_insertion}, for each $i,j$ the number of edges labelled 1 in $[(-\infty, j), (i,j)]$ is the length of the first column of $P_{i,j}(w)$, equivalently the length of a longest decreasing subsequence of $(w_k)_{k>j,w_k\leq i}$, which is at most $n$ since $w\in \tilde{S_n}$.

Since every shadow line extends infinitely in the northwest-southeast direction, it must cross the vertical line $[(-\infty,j),(\infty,j)]$ for every $j$. Thus the claim follows.
\end{proof}
\subsection{Colored Shadow Lines and Dual Affine RS}
We now introduce a coloring rule and use it to define the dual affine RS correspondence. 
Let $N$ be the number of shadow lines labeled with 1. We will define a coloring $\C(s)\in \{1,\dots,N\}$ for each shadow line $s$.

If $s$ and $s'$ are two non-crossing shadow lines and there exists a bump tile in which both $s$ and $s'$ pass through, we say that $s$ and $s'$ are \emph{repelling} shadow lines. Notice that if $\L(s')\le \L(s)$, then necessarily $\L(s)=\L(s')+1$. We also say in this case that $s'$ \emph{repels} $s$. \begin{lemma}
\label{lem:unique-repel}
	For every shadow line $s$ with $\L(s)>1$, there is a unique $s'$ that repels $s$.\end{lemma}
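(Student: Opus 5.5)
The plan is to attach to every shadow line a \emph{row index} that does not change along the line, and then to show that a bump tile can only join lines with the same row index. Fix a vertical cut, namely the column of horizontal edges $[(-\infty,j),(i,j)]$ with $i$ large. By \Cref{def:col-label} and \Cref{lem:affine_insertion}, the nonzero labels on this cut, read from north to south, record the column positions of the entries of $P_{i,j}(w)$ in increasing order. So the $k$-th crossing labelled $a$, counted from the north, is the entry in row $k$ of column $a$. Every shadow line crosses every such cut (as in the proof of the corollary above), so each line $s$ with $\L(s)=a$ gets a row index $r_j(s)$ on the cut $j$. A horizontal cut (a row of vertical edges) gives an analogous index.

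\textbf{Step 1: the row index is constant along a line.} I would show that $r_j(s)$ does not depend on $j$, or on whether a vertical or horizontal cut is used. The cut is moved across one tile at a time, and the effect on the label word is read off from each of the four dual edge local rules in \Cref{fig:dual_edge_rule}. A cross tile with labels $a\neq b$ only swaps the relative position of two different labels, so the rank of each line among lines of its own label is unchanged. In a bump tile, the line labelled $a$ turns north$\to$east and the line labelled $a+1$ turns west$\to$south; neither passes another line of its own label. A marked tile creates a label-$1$ line, and since nothing lies northeast of a mark it becomes the northernmost (row $1$) in the region it enters. This is the dual-insertion interpretation of \Cref{prop:small_u}.

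\textbf{Step 2: a bump tile joins lines of the same row index.} In a bump tile the line $s'$ with label $a$ and the line $s$ with label $a+1$ use the four edges of a single tile. On a cut through that tile, the $(a+1)$-entry sits directly to the right of the $a$-entry in the tableau $P_{i,j}$ given by \Cref{lem:affine_insertion}, so both lie in the same row of $P_{i,j}$. Concretely, the label-$(a+1)$ box added by the local rule is added in the row of the label-$a$ box being bumped, and this is exactly the dual Schensted bumping step. Combined with Step 1, $r(s)=r(s')$.

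\textbf{Step 3: uniqueness.} Lines of a fixed label never cross: a cross tile needs $a\neq b$, and no tile lets two lines of one label meet. Hence lines with label $a$ are totally ordered from northeast to southwest, and on any cut the ordering by row index agrees with this geometric order. So for each row index there is at most one line with label $a$. By Step 2, any line that repels $s$ is \emph{the} line with label $\L(s)-1$ and row index $r(s)$, which gives uniqueness.

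\textbf{Step 4: existence, the expected main obstacle.} I would trace $s$ toward the northwest. Its label is $a+1>1$, so it never passes through a marked tile. Moreover the edges far north on a fixed column are $0$, because $w(b)$ for $b\ge j$ is bounded below. Following $s$ northwest, it therefore cannot run straight north forever and must turn at some tile. A west$\leftrightarrow$south turn is a bump tile in which $s$ is the lower line, which is exactly what is needed. The difficulty is to rule out the alternative that $s$ only makes north$\leftrightarrow$east turns, as the upper line of bumps with label-$(a+2)$ lines, and drifts west forever. To handle this, I would show that as $j\to-\infty$ the cut tableau $\vec P_j$ has a label-$(a+1)$ entry in row $r(s)$. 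For the tableau to be valid, it then also has a label-$a$ entry in that row. Using periodicity $w(i+n)=w(i)+n$ and the bound of $n$ label-$1$ lines, I would argue that the tableau is built entirely by finitely many dual bumping steps within a period. Then the label-$(a+1)$ box in row $r(s)$ must have been created by a bump from the label-$a$ line in the same row, which produces the required bump tile.
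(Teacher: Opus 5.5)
Your Step 2 is false, and Steps 3 and 4 depend on it. By \Cref{lem:affine_insertion}, moving the cut from column $q$ to column $q-1$ column-inserts $w(q)$. In a bump tile, the entry expelled from row $\rho$ of column $a$ re-enters column $a+1$ in some row $\rho'\le\rho$, possibly strictly smaller; nothing places it directly to the right of its old box. Tracking positions through the tile, the label-$a$ line has row index $\rho$ and the label-$(a+1)$ line has row index $\rho'$, so the local rule only gives $r(s)\le r(s')$.

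Here is a concrete failure. Take $w=[2,3,1,4]\in\tilde S_4^{(0)}$, so the tiles $(2,1),(3,2),(1,3),(4,4)$ are marked and the block of rows and columns $1,\dots,4$ is the finite dual growth diagram of $2314$. Let $A$ be the label-$1$ line through the marks $(1,3),(4,4)$, let $B$ be the label-$1$ line through $(2,1),(3,2)$, and let $C$ be the label-$2$ line entering tile $(3,1)$ from the west. Then $C$ has west--south elbows in the bump tiles $(3,1)$ (partner $B$) and $(4,3)$ (partner $A$), and it crosses $B$ in the cross tile $(4,2)$. On the cut $j=0$, all labels above row $1$ are $0$ and rows $1,\dots,4$ carry $1,1,2,3$. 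Hence $r(A)=1$, $r(B)=2$, $r(C)=1$, so $C$ bumps $B$ although $r(C)\neq r(B)$. The paper relies on exactly this phenomenon: the proof of \Cref{lem:stable-partition} allows a bump tile to support lines of different colors, which then cross. Moreover, since colors are row indices by \Cref{lem:color=row}, the color condition defining a stable window would be vacuous if your Step 2 held.

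The missing ingredient is the hypothesis \emph{non-crossing} in the definition of ``repels'', which your argument never uses. As written, Step 3 would show that $s$ bumps at most one line of label $\L(s)-1$. The example refutes this: the unique repeller of $C$ is $A$, while $B$ bumps $C$ but crosses it. Step 4 likewise treats any west--south elbow of $s$ as ``exactly what is needed'' before invoking the same false principle.

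Uniqueness can be repaired geometrically. Lines of one label keep their north--south order on all vertical cuts, and a non-crossing bumping partner of $s$ lies north of $s$ on every cut. Suppose $t_1$ north of $t_2$ were two such partners, and consider the bump tile of $t_1$ and $s$ in column $q$. The line $t_2$ would have to meet cut $q-1$ strictly between $t_1$ and $s$ (above the tile) and cut $q$ strictly between them (below the tile). It would then use the top and bottom edges of that tile, which belong to $t_1$ and $s$.

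For existence, your row indices give the right candidate, namely the line $t$ of label $\L(s)-1$ with $r(t)=r(s)$. Row-strictness of $P_{i,j}$ keeps $t$ north of $s$ on every cut, so it never crosses $s$. You still have to show that some west--south elbow of $s$ has $t$ as its partner rather than a line of larger row index, and your sketch does not do this. The paper sidesteps all of this by quoting the finite statement from Viennot's construction and passing to the limit of the diagrams $\Gamma^{(i,j)}$.

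There is also a minor slip in Step 1. The line through a marked tile in column $q$ is generally not in row $1$. It runs down column $q$ and leaves east through the first label-$1$ edge of cut $q$ below the mark, keeping the row index of the entry that $w(q)$ replaces. The constancy of $r$ itself is correct.
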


\begin{proof}
Recall that this property is apparent for any finite growth diagram by Viennot's  original shadow line construction (using the dual convention). The affine growth diagram is an asymptotic limit of finite growth diagrams (as $\lim_{\substack{i\rightarrow \infty, \\j\rightarrow -\infty}}\Gamma^{(i,j)}$ where $\Gamma^{(i,j)}$ is defined as in the proof of \Cref{lem:affine_insertion}), and thus the property still holds.

\end{proof}

\begin{definition}
\label{def:coloring-rule}	
We define a coloring rule on the shadow lines as follows:
\begin{enumerate}
	\item 
	Order the shadow lines $s^1_1,\dots, s^1_N$ labeled by 1 from northeast to southwest.		\item For each $1\le i\le N$, let $(s_i^j)_{j\in\mathbb{N}_+}$ be a sequence of shadow lines such that $s_i^j$ repels $s_i^{j+1}$  for all $j$. By Lemma~\ref{lem:unique-repel}, each shadow line is contained in a unique such sequence, and $\L(s_i^j)=j$ for all $j$. Assign each shadow line in the sequence the $i$-th color, i.e., 
		$\C(s^j_i)\coloneq i$ for all $j\in \mathbb{N}_+$ \footnote{The mnemonic of our notation is L for labeL and R for coloR. We adopt this choice to remind the reader that coloRs remember information about Rows of tableaux, and labeLs remember information about coLumns, as will be explained in the next section.}.
	\end{enumerate}
Notice that a shadow line is uniquely determined by its label and color. Given a color $r$ and label $l$, let $\S(r,l)\coloneqq s_r^l$ denote the corresponding shadow line. If $e$ is an edge crossed by a shadow line $s$, we define $\S(e)\coloneqq s$, $\L(e)\coloneqq\L(s)$ and $\C(e)\coloneqq \C(s)$.
\end{definition}
\begin{remark}
	Note that each shadow line carries two different data: a label and a color. There are finitely many colors, but infinitely many labels. \end{remark}

\begin{definition}
We define for $a,b\in\mathbb{Z}$ the $(a,b)$-\emph{window}, denoted by $\Omega^{(a,b)}$, to be the collection of tiles of the following form \[\{(i,j)\text{-tile}:(a-1)n< i \le  an,\ (b-1)n< j\leq bn \}.\] 
 We say that $\Omega^{(a,b)}$ is \emph{full} whenever $w(j)\le (a-1)n$ for all $(b-1)n<j\le bn$. Equivalently, a window is full if and only if the tiles within are cross tiles or bump tiles.
We define the $m$-th \emph{standard window} to be $\Omega^m\coloneqq \Omega^{(m,1)}$, and define $\lambda^m$ to be the partition at the northeast corner of $\Omega^m$, namely $\lambda^m=\Gamma_w((m-1)n,n)$.
\end{definition}
Observe that it is an immediate consequence of the periodicity $w(q+n)=w(q)+n$ for all $q\in \mathbb{Z}$ that in $\Gamma_w$, the $(i,j)$-tile  is identical to the $(i+n,j+n)$-tile for all $i,j\in \mathbb{Z}$. Therefore $\Omega_w^{(a,b)}$ is identical to $\Omega_w^{(a+1,b+1)}$ for all $a,b\in \mathbb{Z}$. It follows that the entire growth diagram $\Gamma_w$ is determined by the standard windows $\{\Omega_i\}_{i\in \mathbb{Z}}$.

\begin{definition}
	Let $\Omega=\Omega^{m}$ be a standard window with no marked tile below. Let $\oto$ be the sequence of edges of the north boundary, indexed from \emph{right} to \emph{left} (Important!). In other words, for $1\le i\le n $, $\oto(i)=[((m-1)n,n-i),((m-1)n,n-i+1)]$. Meanwhile, let $\ori$ be the sequence of edges of the east boundary, indexed from top to bottom, i.e. $\ori(i)=[((m-1)n+i-1, n),((m-1)n+i , n)]$.
\end{definition}

\begin{definition}
	 Given a sequence of edges $E=(E(1),\dots, E(m))$, we may define its \emph{label sequence} $\L(E)\coloneqq(\L(E(1)),\dots,\L(E(m)))$ and similarly its \emph{color sequence} $\C(E)$.
\end{definition}

\begin{definition}
	We say that a window is \emph{stable} if  the color sequence of the east boundary is the same as that of the west boundary (in other words, $\C(\Omega^i_\textup{east})=\C(\Omega^{i+1}_\textup{east})$), and if $c<d$ both appear in a color sequence, $c$ appears at least as many times as $d$. (Notice that the condition on color sequences of the boundary is equivalent to the condition that two shadow lines supported by a bump tile must have the same color. Furthermore, by \Cref{lem:stable-partition}, stability is equivalent to the condition $\lambda^{i+1}-\lambda^i=\lambda^{i+2}-\lambda^{i+1}$ and $\lambda^{i+1}-\lambda^i$ is a partition.)
\end{definition}

Notice that a stable window must also have $\C(\Omega^i_\textup{north})=\C(\Omega^{i+1}_\textup{north})$, since the condition on the vertical boundaries implies that no bump tile can support two shadow lines of different colors, so the color sequences on the horizontal boundaries must also agree.

We are now ready to define the dual affine RS correspondence. Recall that a \emph{tabloid} is a filling of a partition of size $n$ with numbers in $[n]$ such that each number appears exactly once, and strictly increasing in each row.

\begin{definition}\label{def:dars}
	Let $w\in \tilde S_n$ and $\Gamma_w$ be its growth diagram, and $\Omega^{N_0}$ be the first stable window of $\Gamma_w$. The dual affine RS correspondence is the map $\RS(w)=(\bar P,\bar Q,\lambda, N_0)$ obtained as follows:
	\begin{enumerate}
		\item $\bar P$ is a tabloid whose $i$-th row contains numbers $\{j:\C(\ori(j))=i\}$
		\item $\bar Q$ is a tabloid whose $i$-th row contains numbers $\{j:\C(\oto(j))=i\}$
		\item $\lambda = \lambda^{N_0}$, the partition at the northeast corner of $\Omega^{N_0}$.
		
	\end{enumerate}
\end{definition}

To prepare for the statement of the main theorem we give a few more definitions.
\begin{definition}
	Let $\bar{P}$ be a tabloid of shape $\mu$, and $\lambda$ be a partition. We define $\lambda \looparrowright \bar{P}$ to be the row-increasing filling of the skew shape $(\lambda+\mu)/\lambda$ such that for each $i$, the $i$-th row of $(\lambda+\mu)/\lambda$ and the $i$-th row of $\bar{P}$ have the same content. 
\end{definition}

\begin{example}
Let $\lambda = (2,1)$,
	\ytableausetup
{boxsize=normal,tabloids}
$\bar{P} =\ytableaushort{
245, 13, 6
},$ then \ytableausetup
{boxsize=normal,notabloids} $\lambda \looparrowright \bar{P}=\ytableaushort{\none\none245,\none13,6}$.
\end{example}

If $P$ is a (skew) standard tableau with $n$ boxes, for $1\le i\le n$, let $\row_P(i)$ denote the row index of the box containing $i$ in $P$, and $\col_P(i)$ the column index of the box containing $i$ in $P$.
 	\begin{definition}
 	\label{def:omega-saturated}
 		Let $P$ and $Q$ be skew standard tableaux of shape $\nu/\lambda$ with $|\nu/\lambda|=n$. 
 		We construct an $n\times n$ grid $\omega=\omega(P,Q)$ with north-east running colored shadow lines as follows.
 		\begin{enumerate}
 			\item Label and color the north and east edges of $\omega$ such that $\L(\omega_{\text{north}}(i))=\col_Q(i)$, $\C(\omega_{\text{north}}(i))=\row_Q(i)$,  $\L(\omega_{\text{east}}(i))=\col_P(i)$, and $\C(\omega_{\text{east}}(i))=\row_P(i)$. 
 			\item Determine the edge labels by the growth rule (Figure~\ref{fig:dual_edge_rule}).
 			\item Form north--east running shadow lines by connecting the same edge labels. The colors of these shadow lines are determined by the colors on the boundary.
 		\end{enumerate}
	We say that  $\omega$ is \emph{saturated} if $\nu-\lambda$ is a partition, and no square containing a bump has two colors.
 		\end{definition}
\begin{example}
\label{ex:omegaPQ}
	In \Cref{fig:omegaPQ}, the diagram on the left is $\omega(P,Q)$ for \ytableausetup{boxsize=1.2em}$P=\ytableaushort{\ \ 14,\ 23}$ and $Q=\ytableaushort{\ \ 23,\ 14}$ and this  is not saturated. The diagram on the right is $\omega(P,Q)$ for \[P=\ytableaushort{\ \ \ \ 134,\ \ \ 2} \text{\ \ \  and\ \  }Q=\ytableaushort{\ \ \ \ 123,\ \ \ 4}\] and this is saturated.
\end{example}
 	
\begin{figure} 	
\begin{tikzpicture}[x=1cm,y=1cm,line cap=round,line join=round]
  \tikzset{elabel/.style={font=\scriptsize, fill=white, inner sep=1pt}}
\definecolor{linecolor1}{HTML}{0072B2}
\definecolor{linecolor2}{HTML}{D55E00}
\definecolor{linecolor3}{HTML}{009E73}
\definecolor{linecolor4}{HTML}{CC79A7}

  \draw[gray!65] (0,0) grid (4,4);
  \draw[thick] (0,0) rectangle (4,4);

  \draw[line width=1.15pt,linecolor1]
    (4,3.5) -- (3,3.5)
    arc[start angle=270,end angle=180,radius=.5];

  \draw[line width=1.15pt,linecolor2]
    (4,2.5)
    arc[start angle=270,end angle=180,radius=.5]
    -- (3.5,4);

  \draw[line width=1.15pt,linecolor2]
    (4,1.5)
    arc[start angle=270,end angle=180,radius=.5]
    arc[start angle=0,end angle=90,radius=.5]
    -- (1,2.5)
    arc[start angle=270,end angle=180,radius=.5]
    -- (0.5,4);

  \draw[line width=1.15pt,linecolor1]
    (4,0.5)
    arc[start angle=270,end angle=180,radius=.5]
    arc[start angle=0,end angle=90,radius=.5]
    arc[start angle=270,end angle=180,radius=.5]
    -- (2.5,3)
    arc[start angle=0,end angle=90,radius=.5]
    arc[start angle=270,end angle=180,radius=.5];

  \foreach \x/\y/\lab in {
    0.5/4/3, 1.5/4/4, 2.5/4/3, 3.5/4/2,
    0.5/3/3, 1.5/3/5, 2.5/3/4, 3.5/3/2,
    0.5/2/4, 1.5/2/5, 2.5/2/4, 3.5/2/3,
    0.5/1/4, 1.5/1/6, 2.5/1/5, 3.5/1/4,
    0.5/0/4, 1.5/0/7, 2.5/0/6, 3.5/0/5}
    \node[elabel] at (\x,\y) {$\lab$};

  \foreach \x/\y/\lab in {
    0/3.5/5, 1/3.5/5, 2/3.5/4, 3/3.5/3, 4/3.5/3,
    0/2.5/4, 1/2.5/3, 2/2.5/3, 3/2.5/3, 4/2.5/2,
    0/1.5/6, 1/1.5/6, 2/1.5/5, 3/1.5/4, 4/1.5/3,
    0/0.5/7, 1/0.5/7, 2/0.5/6, 3/0.5/5, 4/0.5/4}
    \node[elabel] at (\x,\y) {$\lab$};
\end{tikzpicture} \qquad \qquad
\begin{tikzpicture}[x=1cm,y=1cm,line cap=round,line join=round]
  \tikzset{elabel/.style={font=\scriptsize, fill=white, inner sep=1pt}}
\definecolor{linecolor1}{HTML}{0072B2}
\definecolor{linecolor2}{HTML}{D55E00}
\definecolor{linecolor3}{HTML}{009E73}
\definecolor{linecolor4}{HTML}{CC79A7}

  \draw[gray!65] (0,0) grid (4,4);
  \draw[thick] (0,0) rectangle (4,4);

  \draw[line width=1.15pt,linecolor1]
    (4,3.5) 
    arc[start angle=270,end angle=180,radius=.5];

  \draw[line width=1.15pt,linecolor2]
    (4,2.5)
    -- (1,2.5)arc[start angle=270,end angle=180,radius=.5]
    -- (0.5,4);

  \draw[line width=1.15pt,linecolor1]
    (4,1.5)
    arc[start angle=270,end angle=180,radius=.5]
    -- (3.5,3)
    arc[start angle=0,end angle=90,radius=.5]
    arc[start angle=270,end angle=180,radius=.5];

  \draw[line width=1.15pt,linecolor1]
    (4,0.5)
    arc[start angle=270,end angle=180,radius=.5]
    arc[start angle=0,end angle=90,radius=.5]
    arc[start angle=270,end angle=180,radius=.5]
    -- (2.5,3)
    arc[start angle=0,end angle=90,radius=.5]
    arc[start angle=270,end angle=180,radius=.5];

  \foreach \x/\y/\lab in {
    0.5/4/4, 1.5/4/7, 2.5/4/6, 3.5/4/5,
    0.5/3/4, 1.5/3/8, 2.5/3/7, 3.5/3/6,
    0.5/2/5, 1.5/2/8, 2.5/2/7, 3.5/2/6,
    0.5/1/5, 1.5/1/9, 2.5/1/8, 3.5/1/7,
    0.5/0/5, 1.5/0/10, 2.5/0/9, 3.5/0/8}
    \node[elabel] at (\x,\y) {$\lab$};

  \foreach \x/\y/\lab in {
    0/3.5/8, 1/3.5/8, 2/3.5/7, 3/3.5/6, 4/3.5/5,
    0/2.5/5, 1/2.5/4, 2/2.5/4, 3/2.5/4, 4/2.5/4,
    0/1.5/9, 1/1.5/9, 2/1.5/8, 3/1.5/7, 4/1.5/6,
    0/0.5/10, 1/0.5/10, 2/0.5/9, 3/0.5/8, 4/0.5/7}
    \node[elabel] at (\x,\y) {$\lab$};
\end{tikzpicture}
\caption{Examples of $\omega(P,Q)$}
\label{fig:omegaPQ}
 	\end{figure}
\begin{theorem}\label{thm:main}
	The map $\RS$ in \Cref{def:dars} is a bijection between $\tilde S_n$ and tuples $(\bar P,\bar Q,\lambda, N_0)$ with the following properties:
	\begin{enumerate}
		\item $\bar{P},\bar{Q}$ are tabloids of the same shape $\mu \vdash n$.
		\item $\lambda$ is a partition such that 
		
			\begin{enumerate}
				\item $\ell(\lambda)\leq \ell(\mu)$, where $\ell(\lambda)$ is the number of parts of $\lambda$.
	
				\item  $P\coloneqq \lambda\looparrowright \bar{P}$ and $Q\coloneqq \lambda\looparrowright \bar{Q}$ are both skew SYTs, and it is not possible to slide any column $i$ of $P$ and $Q$ simultaneously up by one square and maintain that both tableaux are skew SYTs.
				\item $\omega(P,Q)$ is saturated.
			\end{enumerate}
		\item If $\lambda':=\lambda-\mu$ is a partition, it does not satisfy all the conditions in (2).
		
		\item $N_0\in\mathbb{Z}$. 
	\end{enumerate}
	Furthermore, if $\RS(w)=(\bar P,\bar Q,\lambda, N_0)$ and $w\in \widetilde{S}^{(i)}_n$, then $|\lambda|=n(N_0-2)-i$.
\end{theorem}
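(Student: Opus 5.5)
The plan has two halves. First, show that $\RS$ is well defined and lands in the stated set. Second, construct an explicit inverse by rebuilding the growth diagram $\Gamma_w$ from $(\bar P,\bar Q,\lambda,N_0)$, growing outward from the first stable window.

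\emph{Step 1 (stabilization).} I begin by proving that a stable window exists. By periodicity and \Cref{lem:affine_insertion}, the east boundary of $\Omega^m$ records insertion tableaux of longer and longer finite truncations of the semi-infinite word $(w_k)_{k>0}$. For $m$ large, every window $\Omega^m$ is full: all its tiles are cross or bump tiles, with no marks. In a full window the edge local rules act on labels only through the relations of \Cref{prop:small_u}, which implement an $n$-fold jeu de taquin/insertion step. By the asymptotic behaviour of Schensted insertion of periodic words (Pak, and the Chmutov--Pylyavskyy--Yudovina analysis), this map eventually translates the tableau rigidly, so $\lambda^{m+1}-\lambda^m$ becomes constant and equal to a partition $\mu$. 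I would use \Cref{lem:stable-partition} to convert this into the color condition, which gives stability. Since the colors are constant along shadow lines (\Cref{def:coloring-rule}) and a bump tile in a stable window cannot join two colors, each color $r$ appears on $\ori$ exactly $\mu_r$ times. Hence $\bar P$ and $\bar Q$ are tabloids of the common shape $\mu$, which is (1).

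\emph{Step 2 (conditions (2)--(3) and the size formula).} Inside $\Omega^{N_0}$ the labels on $\ori$ are columns and the colors are rows of $P_{N_0 n,n}$ restricted to the last $n$ boxes. So the skew tableau read from the east boundary is exactly $\lambda\looparrowright\bar P$, and the same holds for the north boundary and $\bar Q$. The window $\Omega^{N_0}$ is then literally $\omega(P,Q)$, and saturation (2c) is a restatement of stability. For (2b), I argue as follows: if a column could be slid up in both $P$ and $Q$, then the previous window $\Omega^{N_0-1}$ would already be stable, contradicting minimality. Condition (3) is the same minimality argument phrased via $\lambda-\mu=\lambda^{N_0-1}$. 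Condition (2a) follows because each row of $\lambda$ must be supported by a color, of which there are only $\ell(\mu)$. The formula $|\lambda|=n(N_0-2)-i$ is a box count. Every mark contributes one box, and the number of marks northeast of the corner $((N_0-1)n,n)$ is the number of pairs $(a,b)$ with $a\le (N_0-1)n$, $b>n$ and $w(b)=a$. Summing over residues using $\sum_j (w(j)-j)=ni$ gives the result.

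\emph{Step 3 (inverse).} Given a tuple, form $P$ and $Q$ and place the saturated $\omega(P,Q)$ as $\Omega^{N_0}$. Saturation lets me copy this window periodically southward, reproducing all windows $\Omega^m$ with $m\ge N_0$. I then recover the windows $\Omega^m$ with $m<N_0$, and the marks, by running the dual edge rules backwards from the south and east boundary data. Within each tile the rules of Figure~\ref{fig:dual_edge_rule} are reversible: given the south and east labels, the north and west labels are determined, and a mark is forced exactly when the rule with a $1$ entering from the west and no continuation occurs. This is the standard reversibility of Fomin growth, now organized window by window, and it terminates because the labels eventually become $0$ at the northeast. Finally I check that the $w$ produced is a bijection $\mathbb Z\to\mathbb Z$ with $w(q+n)=w(q)+n$; periodicity is built into the construction. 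I also check that its $\RS$ returns the same tuple. Conditions (2b) and (3) are what guarantee that the recovered diagram has $\Omega^{N_0}$ as its \emph{first} stable window, so the value $N_0$ is reproduced.

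\emph{Main obstacle.} I expect the hardest part to be Step 3's claim that backward growth from an arbitrary tuple satisfying (2)--(3) never gets stuck. Concretely, I must show that the colors propagated backward remain consistent with \Cref{lem:unique-repel} and the coloring rule, so that the first-column shadow lines are ordered correctly, and that exactly $n$ marks appear per period. My first attempt would be induction on $N_0$ combined with $|\lambda|$, using the sliding condition (2b) to show that un-growing one window either produces a valid tuple with smaller $|\lambda|$ or terminates in the mark-creating rule.
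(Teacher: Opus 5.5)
There is a genuine gap at condition (2)(b), and it affects both directions. In Step 2 you derive (2)(b) from the minimality of $N_0$: a column slidable in both $P$ and $Q$ would make $\Omega^{N_0-1}$ stable. This is a non sequitur. Sliding a column removes a single box from $\lambda$, and nothing relates the resulting pair to the window $\Omega^{N_0-1}$.

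In fact (2)(b) has nothing to do with $N_0$ being first. It holds for every full window, and says that $\lambda^m$ is the \emph{smallest} partition compatible with the column sequences $\L(\Omega^m_{\mathrm{north}})$ and $\L(\Omega^m_{\mathrm{east}})$ (\Cref{prop:lambda-minimal}). This is the key structural input you are missing, and it needs a real shadow-line argument. \Cref{lem:non-intersect} shows that the $(i+\delta)$-th line labelled $a+1$ never meets the $i$-th line labelled $a$, where $\delta$ is the minimal excess of blank boxes in column $a$ over column $a+1$. If $\lambda^m$ had excess $\rho>\delta$, the coloring rule and \Cref{lem:color=row} would make the first line labelled $a$ repel the $(1+\rho)$-th line labelled $a+1$. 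But the $(1+\delta)$-th line labelled $a+1$ separates them, so this is impossible.

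The same missing ingredient breaks Step 3. Going backwards, saturation is not a restatement of stability. In $\omega(P,Q)$ the colors are imposed from the boundary (rows of $P$ and $Q$), and only on the north--east running lines. In the reconstructed $\Gamma_w$, colors are assigned globally by \Cref{def:coloring-rule}, via repelling chains started from the label-$1$ lines ordered northeast to southwest.

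You must first show these two colorings agree on $\Omega^{N_0}$. This is precisely what (2)(b) buys, through \Cref{prop:lambda-minimal} (see \Cref{cor:color-consistent}). You must then exclude bump tiles of two colors involving the west--south running lines, about which saturation says nothing. The paper does this in \Cref{lem:omega-saturated}, color by color, using \Cref{prop:crossing} and the bound $\lambda_r+2\mu_r$ on the largest label of color $r$.

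Only after that may you invoke \Cref{lem:stability} to ``copy southward'', conclude $\RS\circ\RS^\dagger=\mathrm{id}$, and use (3) to get firstness. Your treatment of (3) in Step 2 also silently assumes $\lambda^{N_0-1}=\lambda-\mu$, which requires exactly this reverse machinery.

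Two smaller corrections:
\begin{itemize}
\item A tile's north and east labels are determined by its south and west labels, not by its south and east ones: south $b+1$, east $b$ fits both a bump and a cross tile. So the windows above $\Omega^{N_0}$ are filled from $\Omega^m_{\mathrm{south}}=\Omega^{m+1}_{\mathrm{north}}$ and $\Omega^m_{\mathrm{west}}=\Omega^{m+1}_{\mathrm{east}}$.
\item Backward growth from south and west can never get stuck locally, so that is not the real obstacle. The real issue is exactly one mark per row and column. This follows as in \Cref{prop:reverse-from-label}, from the fact that no shadow line crosses a window horizontally or vertically.
\end{itemize}
Step 1 is fine if you rest it on \Cref{lem:stable-partition}; its domination argument makes the appeal to external asymptotics unnecessary. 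Your residue-class count for $|\lambda|=n(N_0-2)-i$ is correct, and more direct than the paper's route through \Cref{lem:eq}.
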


 \begin{example}
\begin{figure}

	\input{diagram.tex}

\caption{Dual affine growth diagram for $w=[10,3,-3,12]\in S_4^{(3)}$. The labels for the shadow lines are shown on the left.}
\label{fig:mainex}
\end{figure}
In \Cref{fig:mainex}, the dual affine growth diagram computes the map $\RS(w)=(\bar{P},\bar{Q},\lambda,N_0)$ where $w=[10,3,-3,12]\in S_4^{(3)}$\ytableausetup
{boxsize=normal,tabloids}
$\bar{P} =\ytableaushort{
13,2,4
}$, $\bar{Q} =\ytableaushort{
12,3,4
}$, $\lambda = (6,6,5)$, and $N_0=7. $
\end{example}
\begin{example}
	Here we give some examples and non-examples of $(\bar P,\bar Q,\lambda)$ with respect to the conditions in \Cref{thm:main}. If \[\bar{P}=\ytableaushort{2,1},\; \bar{Q}=\ytableaushort{2,1},\; \lambda=(1),\] then condition (2)(b) is violated. If instead $\lambda=(2)$ then all conditions are satisfied. If instead $\lambda=(3,1)$ then condition (3) is violated. 
	If \[\bar{P}=\ytableaushort{14,23},\; \bar{Q}=\ytableaushort{23,14},\; \lambda=(2,1),\] then by \Cref{ex:omegaPQ}, condition (2)(c) is violated. On the other hand,  \[\bar{P}=\ytableaushort{134,2},\; \bar{Q}=\ytableaushort{123,4},\;\lambda=(4,3)\] satisfy all conditions.
\end{example}
\begin{remark}
	We emphasize that the coloring rule is a salient feature of the dual affine growth diagram. In particular, when $w$ is a finite permutation treated as an affine permutation under the natural embedding $S_n\hookrightarrow \widetilde{S}_n^{(0)}$, the partition $\lambda$ is always empty and the first stable window is always $\Omega^2$. The coloring rule only makes sense if we treat this finite permutation as an affine permutation!
\end{remark}
 		Given any pair $P,Q$ of skew SYTs and an integer $m$, it is possible to construct a growth diagram by labelling the edges on the north and east boundaries of $\Omega^m$ such that $\L(\Omega_{\mathrm{north}}(i))=\col_Q(i)$ and $\L(\Omega_{\mathrm{east}}(i))=\col_P(i)$, and then the rest of the edge labels can be uniquely determined by the  growth rules in Figure~\ref{fig:dual_edge_rule}. However, a priori, the coloring of the shadow lines as defined in \Cref{def:coloring-rule} is not guaranteed to agree with the coloring in the definition of $\omega(P,Q)$. (The precise condition needed to ensure the consistency of the coloring is given in condition (2)(b) in \Cref{thm:main}.)
 
We define the reverse map, $\RS^\dagger$.
\begin{definition}
\label{def:reverse}
	Let $(\bar{P}, \bar{Q},\lambda, N_0)$ be a tuple that satisfies the conditions in \Cref{thm:main}. We construct a growth diagram as follows:
	\begin{enumerate}
	\item Let $P=\lambda \looparrowright \bar{P}$ and $Q=\lambda \looparrowright \bar{Q}$.
		\item Place edge labels such that  $\L(\Omega_{\mathrm{north}}^{N_0}(i))=\col_Q(i)$ and $\L(\Omega^{N_0}_{\mathrm{east}}(i))=\col_P(i)$.
		\item Fill the $N_0$-th standard window $\Omega^{N_0}$ using the edge local rules in \Cref{fig:dual_edge_rule}. Fill the rest of the standard windows $\Omega^m=\Omega^{(m,1)}$ with the growth rule by the condition that $\Omega^m_\mathrm{west}$ and $\Omega^{m+1}_\mathrm{east}$ have the same sequence of labels. (Notice that for each standard window, if the north and east boundaries are given, the rest of the edge labels in this window are uniquely determined by the growth rule, and the same is true if the west and south boundaries are given.)
		\item Determine the rest of the growth diagram by $\Omega^{(a,b)}=\Omega^{(a+1, b+1)}$ for all $a,b\in\mathbb{Z}$. 
		\item The tiles marked with a dot determine the affine permutation $w$. 
	\end{enumerate}
	Let $\RS^\dagger(\bar{P}, \bar{Q},\lambda, N_0)\coloneqq w$.
\end{definition}

The following proposition shows that the growth diagram for an affine permutation is determined by a small amount of local information.

\begin{prop}
\label{prop:reverse-from-label}
	Let $\mathbf{a}, \mathbf{a}'$ be sequences of positive integers of length $n$ with the same contents (namely, same as multisets). Then we may construct a growth diagram  by placing edge labels on $\Omega^{N_0}$ such that $\L(\Omega^{N_0}_\mathrm{north}(i))=\mathbf{a}_i$ and $\L(\Omega^{N_0}_\mathrm{east}(i))=\mathbf{a}'_i$ and filling the diagram by following the procedure in \Cref{def:reverse}(3)(4). Then the tiles marked with a dot determine uniquely an affine permutation $w$.
\end{prop}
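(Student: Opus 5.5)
We would prove \Cref{prop:reverse-from-label} by showing that the reconstructed diagram is a well-defined, periodic filling of the $\mathbb{Z}\times\mathbb{Z}$ grid by the dual edge local rules, and that each column and each row of tiles in it contains exactly one marked tile.

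\textbf{Step 1 (filling is well defined).} The dual rules of \Cref{fig:dual_edge_rule} are invertible on a single tile. Given the north and east labels $(a,b)$:
\begin{itemize}
\item $a=b=0$ forces a marked tile, with south and west labels $1$;
\item $a=b\neq0$ forces a bump, with south and west labels $a+1$;
\item $a\neq b$ forces a cross.
\end{itemize}
Conversely, the south and west labels determine the tile, since the only tiles with equal south and west labels are the first three types, and these are distinguished by the value $0$, $1$, or $\geq 2$.

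Hence from $\Omega^{N_0}_{\mathrm{north}}$ and $\Omega^{N_0}_{\mathrm{east}}$ we fill $\Omega^{N_0}$ tile by tile from the northeast corner. This yields the labels on $\Omega^{N_0}_{\mathrm{west}}$, which we copy to $\Omega^{N_0+1}_{\mathrm{east}}$.

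\textbf{Step 2 (north boundaries of the other windows).} To fill $\Omega^{N_0+1}$ we also need its north boundary. This is the south boundary of $\Omega^{N_0}$, read from right to left, and it has already been computed. Thus we proceed downward by north+east filling. Upward we use the reverse rules: $\Omega^{N_0-1}$ has its south boundary equal to $\Omega^{N_0}_{\mathrm{north}}$ and its west boundary equal to $\Omega^{N_0}_{\mathrm{east}}$, so by the inverse local rule it is filled from the southwest.

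Periodicity (4) then propagates the standard windows to all of $\Gamma$. Consistency across window boundaries holds by construction: the vertical seam between $\Omega^{(a,b)}$ and $\Omega^{(a,b+1)}$ is the seam between $\Omega^{(a-b+1,1)}$ and $\Omega^{(a-b,0)}\cong\Omega^{(a-b+1,1)}$ shifted, which is exactly the matching condition imposed in (3).

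\textbf{Step 3 (the marks form an affine permutation).} This is the main step. The key invariant is the multiset of labels crossing a line. Each tile preserves the multiset of labels, in the sense that $\{\text{north},\text{east}\}$ and $\{\text{south},\text{west}\}$ are related by the following substitutions:
\begin{itemize}
\item a marked tile replaces $\{0,0\}$ by $\{1,1\}$;
\item a bump replaces $\{a,a\}$ by $\{a+1,a+1\}$;
\item a cross preserves the multiset.
\end{itemize}
We instead track, along a column $j$ of tiles, the horizontal edges only. Following one column downward, the label on the horizontal edge changes exactly at bumps ($a\mapsto a+1$) and at marks ($0\mapsto1$), and is unchanged at crosses.

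Since $\mathbf a$ has positive entries, every horizontal edge on $\Omega^{N_0}_{\mathrm{north}}$ is nonzero, and downward moves never create a $0$. So below row $(N_0-1)n$ there are no marks. Upward, labels decrease: a nonzero label at the south of a tile decreases by $1$ at a bump or stays put at a cross. The label $1$ can only arise from a marked tile, from a cross, or from a bump with labels $0,0$ above, and the last option is excluded.

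We claim that going upward in a fixed column, the label reaches $0$ after finitely many steps and then stays $0$. Staying at $0$ is clear, since a $0$ on the south side with west label nonzero is a cross. For reaching $0$, the label entering a window from the south is bounded by $\max\mathbf a$. This is because the windows $\Omega^m$ for $m<N_0$ all have east and south multisets equal to the content of $\mathbf a'$ minus... — more precisely, the content of boundary labels of $\Omega^{m}$ shifts by the uniform rule above, so the total label sum over the east boundary drops by at least one per window unless the window consists only of crosses. A window consisting only of crosses would force the east and west labels to be equal, and hence every window above it to be equal as well; this contradicts the strict decrease forced by the content argument. This descent argument is what we expect to be the main obstacle, and we would make it rigorous by induction on the label sum over a window boundary.

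Hence each column contains exactly one $0\to1$ transition, which is exactly one marked tile, and by the symmetric argument on rows each row contains exactly one marked tile. Periodicity of the tiles gives $w(j+n)=w(j)+n$, so the marks define a bijection $w\in\tilde S_n$. Uniqueness holds because every step of the construction was forced.
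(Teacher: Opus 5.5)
Your Steps 1--2 are fine up to one slip. North and east labels $0,0$ do not force a marked tile, since the unmarked all-zero tile has the same north and east labels. This is harmless here only because every tile of $\Omega^m$, $m\ge N_0$, has positive labels, and the windows above $\Omega^{N_0}$ are filled from the south and west, where the local rule is invertible.

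The genuine gap is Step 3, which is the entire content of the proposition. You leave the descent unproven, and the contradiction you sketch is not valid. A window consisting only of crosses (and zero tiles) gives zero decrease of your east-boundary sum, so by itself it contradicts nothing. The ``strict decrease forced by the content argument'' is exactly what has not been shown. Moreover, the relation you record, $\{N,E\}\mapsto\{S,W\}$ by substitution, is not a conservation law. It cannot tell you that the windows $\Omega^m$, $m<N_0$, still satisfy the equal-content hypothesis. Your sentence about ``east and south multisets'' is not right as stated: it is north and east (equivalently south and west) that share content. For example, with $n=1$ and $\mathbf a=\mathbf a'=(1)$, the window $\Omega^{N_0-1}$ is a single marked tile with east label $0$ and south label $1$.

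The missing idea, which is the heart of the paper's proof, has two parts.

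\emph{Propagating equal content.} On every tile the multisets $\{N,W\}$ and $\{S,E\}$ coincide. You can check this on the four tile types, or note that each shadow line enters a window through its north or west side and leaves through its south or east side. Summing over a window gives $N\cup W=S\cup E$. So if $N$ and $E$ have equal content, then so do $S$ and $W$, and this propagates from $\Omega^{N_0}$ to every standard window.

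\emph{Using the cross rule.} Combine equal content with the fact that no tile carries the same nonzero label on all four edges (lines of equal label cannot cross). Write $n_a,e_a,s_a,w_a$ for the number of label-$a$ edges on each side of a window, so that $n_a=e_a$ and $w_a=s_a$. Suppose a label-$a$ line ran straight down a whole column. Then some east exit of label $a$ would have to come from the west side, and that line would meet the vertical one in a forbidden tile.

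The paper concludes that, going up, each column's label drops by at least one per window until it becomes $0$ at a unique marked tile. With these ingredients your global route also closes:
\begin{itemize}
\item The east-boundary sum drops by the number of bumps and marks in the window, as one sees by tracking vertical edges along rows.
\item Hence above some height every window consists of crosses and zero tiles.
\item In such a window a nonzero column label $a$ would meet a row of label $a$ in a forbidden tile, so all labels vanish.
\item Therefore each column passes from a positive label to $0$ through exactly one marked tile.
\end{itemize}
The paper's argument is local, column by column. Yours is a global potential argument that, once equal content is propagated, needs only the single-tile prohibition.
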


\begin{proof}
	Connect the edge labels in each tile as before. The growth rule ensures that in $\Omega^{N_0}$, there cannot be a horizontal line across a row of tiles, or a vertical line across a column of tiles: suppose to the contrary that there is a horizontal line $l$ connecting $\Omega^{N_0}_\mathrm{east}(i)$ to $\Omega^{N_0}_\mathrm{west}(i)$ and the edge label connected by this line is $a$. Since the north boundary and the west boundary have the same multiset of edge labels, it must be the case that an edge with label $a$ on the north boundary is connected with an edge with the same label on the south boundary, which must cross the horizontal line $l$, which violates the growth rule because shadow lines with the same label cannot cross. This implies that all edges  on the west boundary are connected to the edges on the south boundary. By the construction of the growth diagram, the same property holds for every standard window. 
	
	We show that every column contains exactly one tile marked with a dot. Consider the tiles in column $i$  in $\Omega^m$. If $m> N_0$, all edge labels on these tiles are positive. If $m\le N_0$, by the observation in the preceding paragraph, if $\L(\Omega_\mathrm{south}^m(i))$ is nonzero, there must be a bump tile or a marked tile in column $i$ of $\Omega^m$. Looking north in each column, a bump tile decreases horizontal edge labels by 1. Therefore, there must be a unique window $\Omega^{m_0}$, $m_0<N_0$ that contains a marked tile, and all tiles above this tile have all edge labels 0. Therefore there is a unique marked tile in column $i$ for $1\le i \le n$. By periodicity of the growth diagram this is true for every column. The argument for every row contains exactly one tile marked with a dot is similar.\end{proof}

\section{Proof of Main Results}
\label{sec:proof}

\begin{lemma}
	\label{lem:color=row}
	Fix $j\in \mathbb{Z}$ and let $(e_i)_{i\in \mathbb{Z}_+}$ be the sequence of edges  that $e_i$ is the $i$-th edge with a nonzero label in $[(-\infty,j), (\infty,j)]$.
	 If there exists a shadow line that crosses $e_i$, then $\row_{\vec P_{j}}(i)=\C(e_i) $. In other words, the number $i$ is in the $\C(e_i)$-th row of the infinite tableau $\vec P_j$.
\end{lemma}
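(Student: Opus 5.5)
Fix $j$ and work along the vertical line $x=j$. By \Cref{lem:affine_insertion} and \Cref{def:col-label}, the label of $e_i$ is the column of $i$ in $\vec P_j$; what remains is to identify the color of $e_i$ with its row. The plan is to compare the coloring chains of \Cref{def:coloring-rule} with the structure of the columns of $\vec P_j$. Since $\vec P_j$ is an (infinite-in-length, but with at most $N\le n$ rows, by the corollary on labels $1$) tableau whose columns are read off from the edges $e_i$, the row of $i$ is determined by the relative position of $i$ inside its column: if $\L(e_i)=a$ and $e_i$ is the $r$-th edge with label $a$ counted from the top (north) along the line $x=j$, then $\row_{\vec P_j}(i)=r$. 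So the lemma reduces to showing: the $r$-th shadow line of label $a$ crossing the line $x=j$ (ordered from north to south) has color $r$.

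I would prove this by induction on $a$. For $a=1$ it is the definition: the $N$ lines of label $1$ are ordered from northeast to southwest and colored $1,\dots,N$, and since shadow lines with the same label never cross (growth rule), their order along any vertical line is this same order. For the inductive step, take the shadow line $s=s_r^{a}$ and its repeller $s'=s_r^{a-1}$ (\Cref{lem:unique-repel}). I would show that along every vertical line, the label-$a$ lines and label-$(a-1)$ lines interleave so that the $k$-th line of label $a$ lies south of the $k$-th line of label $a-1$ (this is the column-strictness / shape condition of $\vec P_j$ restricted to the crossings), and that the repelling pairing matches the $k$-th label-$(a-1)$ line with the $k$-th label-$a$ line. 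Given non-crossing of equal-label lines, it suffices to show the pairing is order preserving, i.e. if $s'_1$ is north of $s'_2$ (both label $a-1$) then the lines they repel are in the same order. This I would check in the finite dual growth diagram $\Gamma^{(i,j)}$, where Viennot's shadow line construction (dual convention) makes the repelling relation explicit: the label-$a$ lines are the shadow lines of the corners of the label-$(a-1)$ lines, and the corners of a northern line produce a shadow lying north of those of a southern one; then pass to the limit as in the proof of \Cref{lem:unique-repel}.

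The main obstacle is the order-preservation of the repelling pairing, together with the bookkeeping that the $r$-th label-$a$ line in the north-to-south order along $x=j$ really is the $r$-th entry of column $a$ of $\vec P_j$ (this needs that the crossings seen on the line $x=j$ within $\Gamma^{(i,j)}$ stabilize as $i\to\infty$, which follows since only finitely many nonzero labels appear on each vertical line below any given height once the limit tableau is fixed in its first rows). I would handle the order-preservation first by the finite Viennot picture and, if the geometric argument is awkward, instead via the local rules: a bump tile at which $s'$ repels $s$ forces $s$ to turn immediately adjacent to $s'$, and two label-$a$ lines cannot cross, so the lines repelled by successive label-$(a-1)$ lines inherit their order.
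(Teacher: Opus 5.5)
Your reduction is the same as the paper's. Column $a$ of $\vec P_j$ is exactly $\{i:\L(e_i)=a\}$; this is immediate from \Cref{def:col-label} and \Cref{lem:affine_insertion}, so no stabilization argument is needed. (Also, the fact that the $k$-th label-$a$ edge lies below the $k$-th label-$(a-1)$ edge is row-strictness, not column-strictness.) So the lemma amounts to showing that the $r$-th label-$a$ line met on $x=j$ has color $r$, and your base case $a=1$ is fine.

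The gap is the inductive step, the order-preservation of the repelling pairing. You correctly flag it as the main obstacle but do not prove it. Your Viennot-style justification misdescribes the construction. A label-$a$ line is not the shadow of the corners of a single label-$(a-1)$ line: its NE-convex corners generally sit at inner corners of several label-$(a-1)$ lines, and it crosses all of them except its repeller (which is why \Cref{lem:unique-repel} has content). So ``the corners of a northern line produce a shadow lying north of those of a southern one'' says nothing about the lines repelled by $A_1$ and $A_2$. Your local-rule fallback names the right ingredients but omits the decisive step, which uses that the \emph{inner} repelling pair does not cross.

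What is needed is the crossing argument behind the paper's configuration $f_{k-1},g_{k-1},g_k,f_k$. Suppose $A_1,A_2$ have label $a-1$, with $A_1$ north of $A_2$ on $x=j$, and they repel $B_1,B_2$ respectively, with $B_2$ north of $B_1$.

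A repeller lies northeast of the line it repels: it takes the top--right arc in their common bump tile, and the two never cross. So along $x=j$ the order is $A_1,A_2,B_2,B_1$.

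Every shadow line meets each SW--NE diagonal through grid corners exactly once, since $x+y$ is strictly increasing along it. On the diagonal $D$ through the bump tile of $A_1$ and $B_1$, these two lines are adjacent.

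Equal labels never cross. Hence on $D$ the line $A_2$ is southwest of $A_1$, hence of $B_1$, and $B_2$ is northeast of $B_1$, hence of $A_1$. Thus $B_2$ is northeast of $A_2$ on $D$ but southwest of it on $x=j$. So $A_2$ and $B_2$ cross, contradicting that $A_2$ repels $B_2$.

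The same argument with $A_1=A_2$ shows that no line repels two lines. Hence the map sending a line to its repeller is strictly order-preserving along $x=j$.

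To finish the step: every label-$a$ line is some $s^a_r$ with repeller $s^{a-1}_r$, and by induction $s^{a-1}_r$ is the $r$-th label-$(a-1)$ line. Order-preservation then makes $s^a_r$ the $r$-th label-$a$ line. With this inserted, your induction on labels is a valid, and somewhat cleaner, repackaging of the paper's induction on colors.
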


\begin{proof}
	 Recall that $\S(e_i)$ is the shadow line that goes through $e_i$.
	We first prove that
	the label on the edge $e_i$ is the $\C(e_i)$-th time that $\L(e_i)$ appears in the sequence $\{\L(e_i)\}_{i\in\zz}$. 
	
	For the first color, it suffices to show that $\C(e_i)=1$ if and only if $e_i$ is the first time on which $\L(e_i)$ appears. In other words, all edges on which a label first appears must be colored by $1$. Necessarily $\L(e_1)=1$. By \Cref{lem:unique-repel}, there exists a unique subsequence of edges $e_1=f_1,f_2,\cdots$ such that $\S(f_{i})$ repels $\S(f_{i+1})$ for each $i$. By construction these are all colored $1$. We will show that $f_i$ must be the first edge with label $i$. Suppose not and let $k$ be the smallest index such that $f_k$ is  not the first edge with label $k$, and let $g_k$  be the first edge with label $k$. Then there is a unique sequence of repelling shadow lines $\S(g_1),\cdots,\S(g_k)$ with $\L(g_i)=i$. Since $f_{k-1}$ is the first edge with label $k-1$, we must have that $g_{k-1}$ comes after $f_{k-1}$. So these edges are in the order of $f_{k-1},g_{k-1},g_k,f_k$. Because $\S(f_{k-1})$  is repelled by $\S(f_{k})$, we must have that $\S(g_{k-1})$ and $\S(g_k)$  cross either $\S(f_{k-1})$ or $\S(f_k)$. Since shadow lines of the same label cannot cross, we have that $\S(f_{k-1})$ crosses $\S(g_k)$ and $\S(f_k)$ crosses $\S(g_{k-1})$. But this will imply that $\S(g_{k-1})$  crosses with $\S(g_k)$, contradicting  the assumption that $\S(g_{k-1})$ is repelled by $\S(g_k)$.
	
	We then assume by induction that this claim is true for all colors up to $N-1$. For the $N$-th color, the proof will follow a similar logic as the case of the first color. Now let $f_1$ be the $N$-th edge with label 1; it is by definition colored by $N$. Then the  sequence $f_1,f_2,\cdots$ such that $\S(f_i)$ repels $\S(f_{i+1})$ are all colored by $N$. It suffices to show that $f_i$ is the $N$-th occurrence of an edge labelled by $i$. By induction hypothesis, each $f_i$ cannot be the $t$-th edge labelled $i$ for any $t<N$; thus it's left to show that each $f_i$ cannot be the $t$-th edge with label $i$ with $t>N$. Suppose otherwise, and suppose $k$ is the smallest number such that $f_k$ is the $M$-th edge with label $k$ for $M>N$. Let $g_k$ be the $N$-th edge with label $k$, and suppose $\S(g_{k-1})$  repels $\S(g_k)$. Then the edges $f_{k-1},g_{k-1},g_k,f_k$ will be in the exact same contradictory situation as in the first color case.
This completes the proof of the claim.

Recall that $\L(e_i)$ is the column index of $i$ in $\vec P_j$, so if the edge $e_i$ is the $\C(e_i)$-th edge with label $\L(e_i)$, then it must be the case that $i$ is the $\C(e_i)$-th entry of the $\L(e_i)$-th column. Thus $i$ must be on the $\C(e_i)$-th row, completing the proof. 	\end{proof}

\begin{corollary}
\label{cor:at-most-n-rows}
	For all $(i,j)\in\mathbb{Z}\times \mathbb{Z}$, the partition $\Gamma_w(i,j)$ has at most $n$ rows.
\end{corollary}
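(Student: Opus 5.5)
Fix $(i,j)\in\mathbb{Z}\times\mathbb{Z}$. By construction (the vertex rule (2)), $\Gamma_w(i,j)$ is obtained by adding boxes in column $\L(e)$ for each nonzero-labelled edge $e$ on the vertical segment $[(-\infty,j),(i,j)]$. So $\Gamma_w(i,j)$ is exactly the shape of the finite tableau $P_{i,j}(w)$ of \Cref{def:col-label}. By \Cref{lem:affine_insertion}, this is the shape of the insertion tableau of the finite word $(w_k)_{k>j,\,w_k\le i}$. Its number of rows equals the length of its first column, namely the number of edges labelled $1$ on $[(-\infty,j),(i,j)]$.

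The plan is to bound this number by $n$ in one of two ways.

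\textbf{Via \Cref{lem:color=row}.} Each nonzero edge $e_t$ on the vertical line through $j$ is crossed by a shadow line. By \Cref{lem:color=row}, the entry $t$ sits in row $\C(e_t)$ of $\vec P_j$. The partition $\Gamma_w(i,j)$ is the shape formed by the entries $t$ coming from edges above $(i,j)$, and these entries form an initial segment of $\vec P_j$. Hence the row indices occurring in $\Gamma_w(i,j)$ are among the colors $\{1,\dots,N\}$. Since $N$ is the number of shadow lines labelled $1$, the Corollary after \Cref{lem:affine_insertion} gives $N\le n$.

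\textbf{Directly.} By Schensted's theorem, the number of rows of $P_{i,j}(w)$ is the length of a longest decreasing subsequence of $(w_k)_{k>j,\,w_k\le i}$. Suppose such a subsequence $w_{k_1}>\dots>w_{k_{n+1}}$ with $k_1<\dots<k_{n+1}$ had length $n+1$. By pigeonhole, two indices satisfy $k_a\equiv k_b \pmod n$ with $a<b$, so $k_b=k_a+mn$ for some $m>0$. Then $w_{k_b}=w_{k_a}+mn>w_{k_a}$, contradicting the decrease. Hence the length is at most $n$.

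Either argument closes the proof. The only point needing care is to identify $\Gamma_w(i,j)$ with the shape of $P_{i,j}(w)$ through the column-adding convention, so that its row count equals its first-column length. I expect no real obstacle beyond that bookkeeping.
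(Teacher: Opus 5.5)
Your proposal is correct. Your first argument is the paper's proof: by \Cref{lem:color=row}, each entry of $P_{i,j}(w)$ sits in the row given by the color of its edge, because $P_{i,j}(w)$ is an initial segment of $\vec P_j$. Hence $\Gamma_w(i,j)$ has at most $N\le n$ rows.

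Your second argument is a genuinely different and more elementary route. You identify the row count of $\Gamma_w(i,j)$ with the length of a longest decreasing subsequence of $(w_k)_{k>j,\,w_k\le i}$, using \Cref{lem:affine_insertion} and Schensted's theorem. You then bound it by $n$ via periodicity and pigeonhole; that step is correct. This never uses the coloring or the more involved induction behind \Cref{lem:color=row}.

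It is in fact the same count the paper uses, without spelling out the pigeonhole step, to prove the corollary after \Cref{lem:affine_insertion} that $N\le n$. The paper's proof of the present statement depends on $N\le n$, so your direct argument is logically prior. It shows the detour through colors is unnecessary for the bound itself.

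What the paper's route buys is structural rather than quantitative. It shows that the rows of every $\Gamma_w(i,j)$ are indexed by shadow-line colors. That is the interpretation the paper later relies on to read $\bar P$ and $\bar Q$ off the boundary colors.
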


\begin{proof}

	It directly follows from Lemma~\ref{lem:color=row} that the number of rows in each partition is at most the number of colors, which is at most $n$.
\end{proof}

\begin{prop}\label{prop:crossing}
	Every cross tile has the property that the shadow line with smaller color has larger label.
\end{prop}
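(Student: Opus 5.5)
The plan is to read the cross tile through the tableaux $\vec P_{j-1}$ and $\vec P_j$ attached to its two vertical sides. By \Cref{lem:color=row}, along any vertical grid line the $k$-th nonzero-labelled edge carries, as label and colour, the column and row of the entry $k$ in the corresponding infinite tableau.

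Fix a cross tile, the $(i,j)$-tile. Let $s$ be the shadow line through its two horizontal edges, with label $a$. Let $s'$ be the shadow line through its two vertical edges, with label $b$. By the rule for cross tiles, $a\neq b$.

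First I need a geometric fact: on each of the vertical lines $x=j-1$ and $x=j$, the two lines $s$ and $s'$ appear in opposite orders.
\begin{itemize}
\item Shadow lines run monotonically from northwest to southeast: in each tile they enter through the north or west edge and leave through the south or east edge.
\item So $s$, which enters the tile from its north edge, met the vertical line $x=j-1$ at an edge strictly north of row $i$. Meanwhile $s'$ meets $x=j-1$ exactly at $[(i-1,j-1),(i,j-1)]$.
\item Similarly, $s$ leaves through the south edge, so it meets $x=j$ strictly south of row $i$. Meanwhile $s'$ meets $x=j$ at $[(i-1,j),(i,j)]$.
\end{itemize}
Hence, indexing nonzero edges from the top, on $x=j-1$ the edge of $s$ has smaller index than the edge of $s'$, and on $x=j$ the reverse holds. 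This monotonicity is the one point I would verify carefully from the tile pictures, including the bump and marked tiles.

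Now apply \Cref{lem:color=row} on both lines. On $x=j-1$, there are indices $p<q$ such that in $\vec P_{j-1}$:
\begin{itemize}
\item $p$ sits in cell $(\C(s),a)$;
\item $q$ sits in cell $(\C(s'),b)$.
\end{itemize}
On $x=j$, there are indices $q'<p'$ such that in $\vec P_j$:
\begin{itemize}
\item $p'$ sits in cell $(\C(s),a)$;
\item $q'$ sits in cell $(\C(s'),b)$.
\end{itemize}
Crucially, the cells are the same in both tableaux, since labels and colours are attributes of the shadow lines themselves. Both tableaux are standard: each finite truncation $P_{i,j}$ is an insertion tableau by \Cref{lem:affine_insertion}, and standardness passes to the limit. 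In a standard tableau, if one cell weakly northwest of another, not equal, then its entry is smaller.

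The cases then close as follows.
\begin{itemize}
\item Suppose $\C(s)=\C(s')$. Both cells lie in one row, so the order of their entries is fixed by comparing $a$ and $b$. But the orders in $\vec P_{j-1}$ and $\vec P_j$ are opposite, which is a contradiction. So the colours differ.
\item Suppose $\C(s)<\C(s')$ and, for contradiction, $a<b$. Then cell $(\C(s),a)$ is strictly northwest of $(\C(s'),b)$, forcing $p'<q'$ in $\vec P_j$, which contradicts $q'<p'$. Hence $a>b$.
\item Suppose $\C(s')<\C(s)$ and $b<a$. Then cell $(\C(s'),b)$ is northwest of $(\C(s),a)$, forcing $q<p$ in $\vec P_{j-1}$, a contradiction. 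Hence $b>a$.
\end{itemize}
In every case the line with the smaller colour has the larger label.

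The main obstacle is the ordering step. I must make sure that ``strictly north of row $i$'' on line $x=j-1$ and ``strictly south'' on line $x=j$ translate correctly into the index ordering used in \Cref{def:col-label}. I also need \Cref{lem:color=row} to apply to both edges, which requires that shadow lines actually cross them; this holds because the labels are nonzero.
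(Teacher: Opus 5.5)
Your proof is correct, and it takes a somewhat different route from the paper's.

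The paper works at the single vertex $(i-1,j)$. With $\lambda=\Gamma_w(i-1,j)$, it uses Definition~\ref{def:col-label} and Lemma~\ref{lem:color=row} to argue that, for each of the two crossing lines $u$, the cell $(\C(u),\L(u))$ is an addable cell of $\lambda$, i.e.\ $\lambda_{\C(u)}=\L(u)-1$. The partition condition on $\lambda$ then forces the smaller colour to carry the larger label.

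You instead compare the two vertical grid lines bounding the tile, on which the two shadow lines are met in opposite orders. Standardness of $\vec P_{j-1}$ and $\vec P_j$ then shows that neither of the cells $(\C(s),a)$, $(\C(s'),b)$ is weakly northwest of the other. Both arguments amount to showing that these two cells are incomparable.

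The paper's version is shorter. However, its ``observe'' step needs a short extra argument for the line running north--south through the tile, namely that this line has the smallest label among lines of its colour crossing $x=j$ below $(i-1,j)$. That can be supplied, e.g., via the line $x=j-1$ or via the nesting of equal-label lines. Your two-line comparison builds this in, and it also disposes of the equal-colour case explicitly rather than by assumption.

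One small point to tighten concerns the claim that $s$ meets $x=j$. Monotonicity shows that $s$ meets $x=j-1$ going northwest, since labels in column $j$ vanish far north. It does not by itself stop $s$ from running south forever in column $j$. For the crossing of $x=j$ you should cite the fact that every shadow line crosses every vertical grid line, which the paper asserts in the proof of the corollary bounding the number of label-$1$ lines. The paper's own proof relies on the same fact.
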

\begin{proof}
Let the $(i,j)$-tile be a cross tile, and $\lambda\coloneqq\Gamma_w(i-1,j)$ be the partition at the northeast corner of this tile. Suppose the shadow lines $s$ and $t$ cross in this tile and $\C(s)<\C(t)$. Let $A$ be the set of shadow lines that cross $[(i-1,j), (\infty, j)]$, $A_s\coloneqq \{x\in A: \C(x)=\C(s)\}$, and $A_t\coloneqq \{x\in A: \C(x)=\C(t)\}$. Observe that $s$ is the shadow line with the smallest label $a\coloneqq \L(s)$ in $A_s$ and $t$ is the shadow line with the smallest label $b\coloneqq \L(t)$ in $A_t$. By Definition~\ref{def:col-label} and Lemma~\ref{lem:color=row}, it follows that $\lambda_{\C(s)}=\L(s)-1$ and $\lambda_{\C(t)}=\L(t)-1$. Since $\lambda$ is a partition and $\L(s)\neq \L(t)$, we must have $\L(s)>\L(t)$.
\end{proof}

Let $\mu, \mu'$ be compositions of $n$. We say that $\mu$ \emph{dominates} $\mu'$ if for all $1\le k\le n$, $\mu_1+\dots+\mu_k\ge \mu_1'+\dots+\mu_k'$.

Recall that  $\lambda^i$ is the partition at the northeast corner of the $i$-th standard window. Let $\mu^i=\lambda^{i+1}-\lambda^{i}$. Let $N$ be the index of the first standard window such that all dots of the permutation are strictly above this window, namely $w(j)\le n(N-1)$ for $1\le j\le n$ .
\begin{lemma}
\label{lem:stable-partition}
	 For any $i\ge N$, we have $\mu^{i+1}$ dominates $\mu^i$. The window
	 $\Omega^i$ is stable if and only if $\mu^i=\mu^{i+1}$ and $\mu^i$ is a partition.
	 Furthermore, there exists some $m\ge N$ such that $\mu^m$ is a partition and  $\mu^i=\mu^m$ for all $i\ge m$.
	 \end{lemma}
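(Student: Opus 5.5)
The plan has three parts: a dictionary between $\mu^i$ and colors, a Greene-type convexity argument, and a separate treatment of the stability equivalence.

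\textbf{Dictionary.} By \Cref{lem:affine_insertion} and \Cref{def:col-label}, $\lambda^i=\Gamma_w((i-1)n,n)$ is the shape of the insertion tableau $P_{(i-1)n,n}(w)$. So $\lambda^{i+1}/\lambda^i$ consists of the $n$ boxes added along the east boundary $\Omega^i_{\textup{east}}$. By \Cref{lem:color=row}, the box added across $\ori(k)$ lies in row $\C(\ori(k))$. Hence $\mu^i_r$ is the number of edges of $\Omega^i_{\textup{east}}$ with color $r$; in other words, $\mu^i$ is the content of the color sequence $\C(\Omega^i_{\textup{east}})$.

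I also need the colors to be $(n,n)$-periodic. The translation $(i,j)\mapsto(i+n,j+n)$ maps shadow lines to shadow lines and preserves labels and the repelling relation. It therefore induces a bijection on the finite set $\{s^1_1,\dots,s^1_N\}$ of label-$1$ lines that preserves their northeast-to-southwest order. An order-preserving bijection of a finite ordered set is the identity, so the translation fixes every color.

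\textbf{Monotonicity and eventual constancy via Greene's theorem.} Let $g_k(i)=\lambda^i_1+\dots+\lambda^i_k$. By Greene's theorem, $g_k(i)$ is the maximal size of a union of $k$ increasing subsequences of the word $(w_j)_{j>n,\,w_j\le (i-1)n}$. (In the dual convention, labels are columns and count decreasing subsequences, so rows correspond to increasing ones.) The claim ``$\mu^{i+1}$ dominates $\mu^i$'' is exactly the discrete convexity
\[
g_k(i+2)-g_k(i+1)\ \ge\ g_k(i+1)-g_k(i)\qquad (i\ge N).
\]
I would prove it by an exchange argument. Take optimal families realizing $g_k(i+2)$ and $g_k(i)$, shift the letters of one by the periodicity $w(j+n)=w(j)+n$, and recombine them into two admissible families for level $i+1$, whose total size is at least $g_k(i+2)+g_k(i)$. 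The hypothesis $i\ge N$ is used here: all letters with positions in $(n,\infty)$ and values above $(N-1)n$ come from later columns, so the shifted family remains a valid subword. This uncrossing step is the part I expect to require the most care.

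Given convexity, for each $k$ the increment $g_k(i+1)-g_k(i)=\mu^i_1+\dots+\mu^i_k$ is a nondecreasing integer sequence bounded by $n$. It therefore stabilizes, and so $\mu^i=\mu^m$ for all $i\ge m$, for some $m\ge N$. Then $\lambda^{m+t}=\lambda^m+t\mu^m$ is a partition for every $t\ge0$. Letting $t\to\infty$ forces $\mu^m_r\ge\mu^m_{r+1}$, so $\mu^m$ is a partition.

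\textbf{Stability equivalence.} By periodicity of colors, $\C(\Omega^{i+1}_{\textup{east}})$ is the color sequence on the west boundary of $\Omega^i$. Its content is $\mu^{i+1}$, and the content of $\C(\Omega^i_{\textup{east}})$ is $\mu^i$. So if $\Omega^i$ is stable, the two sequences coincide, giving $\mu^i=\mu^{i+1}$, and the ``$c<d$'' counting condition says exactly that $\mu^i$ is a partition.

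The converse is the main obstacle, because equal contents do not obviously give equal sequences. My approach is as follows. In a full window, west edges connect to south edges and north edges connect to east edges, as in the proof of \Cref{prop:reverse-from-label}. A bump tile supporting two lines of distinct colors exchanges color content between the west/south flow and the north/east flow in a way that strictly increases some partial sum. I would try to show this by combining \Cref{prop:crossing} (smaller color has larger label at crossings) with the row interpretation of \Cref{lem:color=row}. Then equality $\mu^i=\mu^{i+1}$, i.e.\ equality in the domination, rules out such bump tiles. Hence every line crossing the window keeps its color, and the east and west color sequences agree position by position.
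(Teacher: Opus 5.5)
Your proposal has a genuine gap at the domination step. You correctly translate ``$\mu^{i+1}$ dominates $\mu^i$'' into $g_k(i)+g_k(i+2)\ge 2g_k(i+1)$. The exchange you describe, however, goes the other way. Merging optimal families for levels $i$ and $i+2$ into two admissible families for level $i+1$, of total size at least $g_k(i)+g_k(i+2)$, would prove $2g_k(i+1)\ge g_k(i)+g_k(i+2)$, i.e.\ that $\mu^i$ dominates $\mu^{i+1}$.

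That reverse statement is false in general. For the paper's example $w=[10,3,-3,12]$, insertion gives $\lambda^5=(4,3,2)$, $\lambda^6=(5,4,4)$, $\lambda^7=(6,6,5)$, so $g_2(5)+g_2(7)=19>18=2g_2(6)$. So no such recombination exists. A correctly oriented argument would have to split an optimal level-$(i+1)$ family, together with its shift, into families at levels $i$ and $i+2$. That is not a routine uncrossing, and you do not supply it. Your eventual-constancy argument uses the monotonicity of the increments, so it inherits this gap. The converse direction of the stability equivalence is, as you acknowledge, only a plan.

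The missing idea is colour monotonicity along each row of tiles of a full window ($i\ge N$). It gives both the domination and the hard direction of the equivalence, without Greene's theorem. Traverse a row of $\Omega^i$ from its east edge to its west edge:
\begin{itemize}
\item a cross tile carries the same shadow line straight through;
\item a bump tile trades a line $s$ of label $a$ (turning north) for a line $s'$ of label $a+1$ (arriving from the south).
\end{itemize}
In the bump case, either $s$ and $s'$ repel, and so have the same colour by \Cref{lem:unique-repel} and the colouring rule. Or they do not repel, hence cross elsewhere, and then \Cref{prop:crossing} gives $\C(s')<\C(s)$. So colours weakly decrease from east to west in every row.

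By periodicity, the west boundary of $\Omega^i$ carries the colour sequence of $\Omega^{i+1}_{\mathrm{east}}$. Hence $\C(\Omega^{i+1}_{\mathrm{east}}(\rho))\le \C(\Omega^{i}_{\mathrm{east}}(\rho))$ for every $\rho$. With your dictionary (the content of the east colour sequence of $\Omega^i$ is $\mu^i$), this is exactly domination.

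If moreover $\mu^i=\mu^{i+1}$, a position-wise inequality between two sequences of equal content forces equality in every row. Then, by monotonicity along the row, no bump tile of $\Omega^i$ changes colour, and the east and west colour sequences coincide. Your dictionary, the periodicity of colours, the ``only if'' direction, and the final argument that $\mu^m$ is a partition are fine as written.
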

\begin{proof}
	By assumption we have that $\mu^i$ is a composition of $n$ for all $i\ge N$. Consider the $\rho$-th row in the $i$-th standard window with $i\ge N$ for some $1\le \rho\le n$, and let $r$ be the color of the rightmost edge in this row and $r'$ be the color of the leftmost edge in this row. Let $s$ and $s'$ be two shadow lines supported by a bump tile on this row, and assume $\L(s')=\L(s)+1$. If $s$ and $s'$ are repelling, they have the same color; otherwise $s$ and $s'$ must cross somewhere, in which case by Proposition~\ref{prop:crossing}, it must be the case that $\C(s')<\C(s)$. It follows that if $r\neq r'$, then necessarily $r'<r$. The east boundary of the $(i+1)$-th standard window is identical to the west boundary of the $i$-th standard window. Now imagine adding boxes to the partition at the northeast corner of a standard window as  we go down the east boundary of this window. If in the $i$-th standard window, at the $\rho$-th step a box is added to some row $r$, by the previous argument, in the $(i+1)$-th standard window, at the $\rho$-th step a box must be added to a row $r'\le r$. The first claim then follows.
	
	For the second statement, the ``only if'' direction is clear. Now if $\mu^i=\mu^{i+1}$, by the  argument from the previous paragraph we must have that every bump tile in $\Omega^i$ supports two shadow lines of the same color, and thus the color sequence of the east boundary is the same as the color sequence of the west boundary. If in addition $\mu^i$ is a partition, a larger color cannot appear more times than a smaller color in the color sequence. It follows that $\Omega^i$ is a stable window. 
	
	Since the number of compositions of $n$ is finite, there must exist some $m\ge N$ such that  $\mu^i=\mu^m$ for all $i\ge m$. If $\mu^m$ is not a partition, it is then impossible that $\lambda^i$ is a partition for all $i\ge m$, a contradiction. 
	\end{proof}

\begin{lemma}
\label{lem:stability}
	If $\Omega^i$ is a stable window, then $\Omega^{i+1}$ is also a stable window.
	\begin{proof}
		Let $e$ be an edge in $\Omega^i$ and denote by $e'$  the corresponding edge in $\Omega^{i+1}$ (that is, $e'$ has end vertices with the same column indices and the row indices increase by $n$). We claim that if $e$ has label $l$ and color $r$, then $e'$ has label $l+\mu^i_r$ and color $r$. 
		 We check that this labeling of the edges in $\Omega^{i+1}$ indeed satisfies the growth rule. Suppose $e$ and $f$ are perpendicular edges of the same tile in $\Omega^i$ and $\L(e)<\L(f)$ (namely, the tile containing $e$ and $f$ is a cross tile), we must have $\C(e)>\C(f)$ and since $\mu^i$ is a partition, we have $\mu^i_{\C(e)}\le \mu^i_{\C(f)}$. Thus $\L(e)+\mu^i_{\C(e)}<\L(f)+\mu^i_{\C(f)}$ and the tile containing $e'$ and $f'$ is a cross tile. If $e, f, g, h$ are top, right, bottom  and left edges of the same tile in $\Omega^i$ and $\L(e)=\L(f)$ (namely, the tile containing $e$ and $f$ is a bump tile), we must have $\L(g)=\L(h)=\L(e)+1$, and by the definition of stable window, $\C(e)=\C(f)=\C(g)=\C(h)$. Therefore,  $\L(e)+\mu^i_{\C(e)}=\L(f)+\mu^i_{\C(f)}$ and $\L(g)+\mu^i_{\C(g)}=\L(h)+\mu^i_{\C(h)}=\L(e)+\mu^i_{\C(e)}+1$, meaning that the tile containing $e', f', g', h'$ is a bump tile.
		 
		 It remains to show that the color sequence of the west boundary of $\Omega^{i+1}$ is the same as that of the west boundary of $\Omega^i$. Consider an edge $e$ colored with 1 in the west boundary of $\Omega^i$ and suppose its label is $l$. Then this is the first appearance of $l$ among the edges in $[(-\infty, 0), (\infty,0)]$. We claim that the edge $e'$ with label $l+\mu^i_1$ is also the first appearance of the label $l+\mu^i_1$. Because edge labels correspond to column indices of the insertion tableau whose shape is a partition, the label $l+\mu_1^i$ cannot appear on the west boundary of $\Omega^i$. Furthermore, if $f$ is an edge before $e$, then necessarily $\L(f)<l$, and thus $\L(f)+\mu^i_{\C(f)}<l+\mu^i_1$. It follows that $e'$ is colored 1.
		 	 To consider the second color, we may disregard all edges colored by the first color (so that the second color becomes the new first color) and repeat the argument. The rest of the colors follow the same way. 
		 	 \end{proof}
\end{lemma}

Let $w\in \tilde{S}_n$ and $\Omega=\Omega^m$ be the $m$-th standard window in $\Gamma_w$ and suppose that it is full. We define $P(\Omega)$ and $Q(\Omega)$ to be the skew SYTs of shape $\lambda^{m+1}/\lambda^m$ such that $\col_{P(\Omega)}(i)=\Omega_{\mathrm{north}}(i)$ and $\col_{Q(\Omega)}(i)=\Omega_{\mathrm{east}}(i)$ for all $1\le i\le n$. In what follows (up to the end of \Cref{prop:lambda-minimal}), we characterize the image of the map $\Omega\mapsto (P(\Omega), Q(\Omega))$ in the set of all pairs of skew SYTs.

Let $\mathbf{a}$ be a sequence consisting of $a$ and $a+1$. Let $d(\mathbf{a})$ be the smallest nonnegative integer such that for all $i>0$, if the $(d(\mathbf{a})+i)$-th occurrence of $a+1$ exists, there must be at least $i$ occurrences of $a$ before this $a+1$. If $\mathbf{a}$ and $\mathbf{a}'$ have the same content, namely, they are two sequences consisting of $a$ and $a+1$ that are identical as multisets, let $\delta(\mathbf{a},\mathbf{a}')=\max(d(\mathbf{a}),d(\mathbf{a}'))$.

It is an immediate consequence of the definition of $d(\mathbf{a})$ that if 
$P$ is a skew SYT such that $\col_P(i)=\mathbf{a}_i$
for all $i$, the minimum number of blank boxes in column $a$ is $d(\mathbf{a})$. Similarly, if we also require that $Q$ is a skew SYT of the same shape as $P$ and $\col_Q(i)=\mathbf{a}'_i$, then the minimum number of blank boxes in column $a$ of $P$ and $Q$ is $\delta(\mathbf{a},\mathbf{a}')$.
\begin{example}
	Suppose $\mathbf{a}_1=(2,3,2,3,3,2,3,2,3,2)$, $\mathbf{a}_2=(3,3,2,2,2,2,2,3,3,3)$. Then $d(\mathbf{a}_1)=1$, $d(\mathbf{a}_2)=2$, and $\delta(\mathbf{a}_1,\mathbf{a}_2)=2.$
\end{example}

\begin{lemma}
\label{lem:smallest-lambda}
	Let $T$ be a sequence of positive integers. Suppose $b_i$ is the $i$-th largest number that occurs in $T$. Let $\mathbf{b}_i$ be the subsequence of $T$ consisting of $b_{i}-1$ and $b_i$. Let $\lambda$ be the partition such that the $k$-th column of (the Young diagram of) $\lambda$ contains 
	$d(\mathbf{b}_1)+\dots +d(\mathbf{b}_i)$ boxes, where $i$ is the largest index such that $b_i\ge k+1$. Let $P$ be a skew SYT of shape $\mu/\nu$ such that $\col_P(i)=T_i$. Then $\lambda$ is the smallest such $\nu$.  
	\end{lemma}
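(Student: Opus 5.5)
The plan is to reduce the statement to a column-by-column count of the blank boxes of $P$. For a skew shape $\mu/\nu$ filled by $P$ with $\col_P(i)=T_i$, write $\nu'_k$ for the length of column $k$ of $\nu$. We then have two things to prove. First, every admissible $\nu$ satisfies $\nu'_k\ge \lambda'_k$ for all $k$. Second, $\nu=\lambda$ itself is admissible, meaning there is a skew SYT of some shape $\mu/\lambda$ whose column word is $T$. Together these say that $\lambda$ is the smallest such $\nu$ in containment order.

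For the lower bound, fix $k$ and let $i$ be the largest index with $b_i\ge k+1$, so that $b_1>b_2>\dots>b_i\ge k+1$. We induct downward on columns, starting from the column $b_1$ that contains the largest entry.

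1. Column $b_1+1$ of $\mu/\nu$ is empty.
2. The standing remark after the definition of $d$ gives the local constraint between adjacent columns $a$ and $a+1$. Rows strictly increase and the numbers $1,\dots,n$ are placed in order, so the $j$-th filled cell of column $a+1$ lies in row $\nu'_{a+1}+j$. Its left neighbour must be either a blank cell or a filled cell holding a smaller entry. This forces
\[
\nu'_a\ \ge\ \nu'_{a+1}+d(\mathbf{a}),
\]
where $\mathbf{a}$ is the subsequence of $T$ consisting of $a$ and $a+1$. The inequality holds whenever $a+1$ occurs in $T$. If $a+1$ does not occur, then $d(\mathbf a)=0$ and the inequality is simply $\nu'_a\ge\nu'_{a+1}$, which holds because $\nu$ is a partition.
3. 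Telescope this inequality from column $k$ up to column $b_1$. Only the pairs $(a,a+1)=(b_j-1,b_j)$ contribute nonzero $d$. Moreover $d(\mathbf a)=0$ whenever $a+1$ is not some $b_j$, because then $\mathbf a$ has no $a+1$ at all.
4. The contributing pairs are exactly $j=1,\dots,i$, since $b_j-1\ge k$ holds iff $b_j\ge k+1$. Hence $\nu'_k\ge \sum_{j\le i} d(\mathbf b_j)=\lambda'_k$.

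One subtlety must be settled first: a value $a+1$ occurring in $T$ does not by itself guarantee that $a$ occurs. This is harmless, because in that case the definition of $d$ forces $d(\mathbf a)$ to equal the number of occurrences of $a+1$. We must also check that $\lambda$ is genuinely a partition. This holds because the column lengths $\lambda'_k$ are weakly decreasing in $k$: as $k$ grows, the index $i$ can only shrink.

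For sufficiency, take $\nu=\lambda$ and place each $T_m$, in increasing order of $m$, into the lowest-available cell of column $T_m$, i.e. directly below the current bottom of that column. Three properties must be verified.

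1. \emph{Columns increase.} This is automatic, since entries are added to each column in increasing order.
2. \emph{Rows increase.} The $j$-th entry of column $a+1$ sits in row $\lambda'_{a+1}+j$. Its left neighbour is a blank cell of $\lambda$ if $\lambda'_a\ge \lambda'_{a+1}+j$. Otherwise it is the $(\lambda'_{a+1}+j-\lambda'_a)$-th entry of column $a$, and this entry exists and is earlier. The reason is $\lambda'_a-\lambda'_{a+1}= d(\mathbf a)$ when $a+1=b_j$ (and $\ge0$ otherwise), which is precisely the defining property of $d$.
3. \emph{The outer shape is a partition.} We need $\lambda'_a+\#\{T=a\}\ge\lambda'_{a+1}+\#\{T=a+1\}$. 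Apply the defining property of $d$ to the last occurrence of $a+1$: it requires at least $\#\{T=a+1\}-d(\mathbf a)$ occurrences of $a$.

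The main obstacle I anticipate is the bookkeeping in the lower bound, specifically identifying exactly which adjacent column pairs contribute to $\lambda'_k$. This requires matching "largest $i$ with $b_i\ge k+1$" against the telescoping range, and handling the gaps between consecutive $b_j$.
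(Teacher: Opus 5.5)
Your proposal is correct and follows essentially the same route as the paper. Both arguments rest on the observation after the definition of $d$: adjacent columns $a,a+1$ force $\nu'_a-\nu'_{a+1}\ge d(\mathbf{a})$, equality is attainable, and these increments are accumulated leftward from the largest occurring column. Your split into a telescoped lower bound for an arbitrary admissible $\nu$, followed by an explicit check that $\nu=\lambda$ works (including that the outer shape is a partition), is a cleaner and more explicit version of the paper's greedy right-to-left construction.
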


\begin{proof}

Initialize an empty grid with $m$ columns, where $m$ is the maximum number that occurs in $T$.
Suppose $T$ has length $l$.
	Reading $T$ from left to right, for each $1\le i \le l$, we put the number $i$ in the first empty spot in column $T_i$. Clearly the numbers in each column are in increasing order from top to bottom. Now, starting from the  last nonempty column and moving to the left, we add a minimal number of blank boxes (possibly zero) to the top of each column and push the numbers downwards to ensure row strictness towards the right, and after we are done we will get a skew SYT with the desired property. We now show that the number of boxes we should add to each column is exactly as stated.
	
	Consider the $k$-th column, for $k=m,\dots, 1$. When $k=m$, no $i$ satisfies $b_i-1\ge m$, since $m$ is the largest number in $T$, so the sum is 0. Indeed we should add no box to the $m$-th column. Suppose the claim is true for $k$ and we show it for $k-1$. To ensure row strictness towards the right we must first add the same number of boxes as in the previous step, which is $d(\mathbf{b}_1)+\dots +d(\mathbf{b}_i)$ boxes, where $i$ is the largest index such that $b_i\ge k+1$. By the definition of $k$ we must have $b_{i+1}\le k$. If $b_{i+1}< k$, $k$ does not appear in $T$, so $P$ should only have empty boxes in column $k$, and this agrees with the rule.  If $b_{i+1}=k$, it must be that $b_{i+2}<k$, so $i+1$ is the largest index such that $b_{i+1}\ge (k-1)+1=k$. By the observation after the definition of $d$, we must add $d(\mathbf{b}_{i+1})$ more blank boxes to column $k$, as predicted by the rule.
\end{proof}
\begin{example}
	Let $T=(4,3,3,5,1,8,3,5,6,6)$. Then $(b_1,b_2,b_3,b_4,b_5,b_6)=(8,6,5,4,3,1)$, $\mathbf{b}_1=(8)$, $\mathbf{b}_2=(5,5,6,6)$, $\mathbf{b}_3=(4,5,5)$, $\mathbf{b}_4=(4,3,3,3)$, $\mathbf{b}_5=(3,3,3)$, $\mathbf{b}_6=(1)$, $d(\mathbf{b}_1)=1$, 
	$d(\mathbf{b}_2)=0$, $d(\mathbf{b}_3)=1$, $d(\mathbf{b}_4)=1$, $d(\mathbf{b}_5)=3$, $d(\mathbf{b}_6)=1$. Then $\lambda^t_7=d(\mathbf{b}_1)=1$, $\lambda^t_6=d(\mathbf{b}_1)=1$, $\lambda^t_5=d(\mathbf{b}_1)+d(\mathbf{b}_2)=1$, $\lambda^t_4=d(\mathbf{b}_1)+d(\mathbf{b}_2)+d(\mathbf{b}_3)=2$, $\lambda^t_3=d(\mathbf{b}_1)+d(\mathbf{b}_2)+d(\mathbf{b}_3)+d(\mathbf{b}_4)=3$, $\lambda^t_2=d(\mathbf{b}_1)+d(\mathbf{b}_2)+d(\mathbf{b}_3)+d(\mathbf{b}_4)+d(\mathbf{b}_5)=6$, $\lambda^t_1=d(\mathbf{b}_1)+d(\mathbf{b}_2)+d(\mathbf{b}_3)+d(\mathbf{b}_4)+d(\mathbf{b}_5)=6$. The corresponding skew tableau is shown below.
	\begin{center}
	\ytableausetup{boxsize=1.2em, notabloids}
	\begin{ytableau}
{} & {} & {} & {} & {} & {} & {} & 6  \\
{} & {} & {} & {} &4 & 9 \\
{} & {} & {} & 1     &8 & 10 \\
{} & {} & 2 \\
{} & {} & 3 \\
{} & {} & 7 \\
5
\end{ytableau}
\end{center}
\end{example}

We have the analogous version of Lemma~\ref{lem:smallest-lambda} for a pair of sequences. The proof is identical, with $d$ replaced by $\delta$, so we omit the proof.
\begin{lemma}
\label{lem:smallest-lambda-for-pair-of-sequences}
	Let $T,T'$ be sequences of positive integers with the same content. Suppose $b_i$ is the $i$-th largest number that occurs in $T$ and $T'$. Let $\mathbf{b}_i$ (resp. $\mathbf{b}_i'$) be the subsequence of $T$ (resp. $T'$) consisting of $b_{i}-1$ and $b_i$. Let $\lambda$ be the partition such that the $k$-th column of (the Young diagram of) $\lambda$ contains 
	$\delta(\mathbf{b}_1,\mathbf{b}_1')+\dots +\delta(\mathbf{b}_i,\mathbf{b}_i')$ boxes, where $i$ is the largest index such that $b_i\ge k+1$. Let $P,Q$ be  skew SYTs of shape $\mu/\nu$ such that $\col_P(i)=T_i$ and $\col_Q(i)=T_i'$. Then $\lambda(T,T')\coloneqq \lambda$ is the smallest such $\nu$.
\end{lemma}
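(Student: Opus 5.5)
We follow the proof of \Cref{lem:smallest-lambda}, replacing the single sequence by the pair $(T,T')$ and $d$ by $\delta$. First we fix the key local fact, which the paper has already recorded after the definition of $\delta$. If $P$ and $Q$ are skew SYTs of a common shape $\mu/\nu$ with $\col_P(i)=T_i$ and $\col_Q(i)=T'_i$, then for each value $a$ occurring in $T$, the number of blank boxes in column $a$ must be at least $\delta(\mathbf{a},\mathbf{a}')$ more than in column $a+1$. Here $\mathbf{a},\mathbf{a}'$ are the subsequences of $T,T'$ consisting of the entries $a$ and $a+1$. The reason is that row strictness between columns $a$ and $a+1$ must hold simultaneously in $P$ and in $Q$, and the two fillings share the same number of blanks per column.

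Next we construct a minimal pair greedily. Write each $i$ into column $T_i$ of $P$ and column $T'_i$ of $Q$, filling columns top to bottom in increasing order. Then process columns from the largest value $m$ leftward. In column $k$ we add blanks at the top equal to the number already placed in column $k+1$ (so the blank region is a partition), plus $\delta$ of the relevant pair of subsequences when $k$ occurs in $T$. Adding the same number of blanks to column $k$ of both tableaux keeps the shapes equal. By the local fact this amount is exactly the minimum needed for both tableaux to be row-strict between columns $k$ and $k+1$, given that column $k+1$ is already fixed. The induction on $k$ from $m$ down is word for word that of \Cref{lem:smallest-lambda}. If $k$ does not occur, nothing is added. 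If $k=b_{i+1}$, the increment is $\delta(\mathbf{b}_{i+1},\mathbf{b}'_{i+1})$, which gives the stated column lengths of $\lambda$.

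Finally we prove minimality. Take any valid $\nu$ and argue by downward induction on $k$ that column $k$ of $\nu$ has at least as many boxes as column $k$ of $\lambda$. For $k>m$ this is trivial. Column $k$ of $\nu$ has at least as many boxes as column $k+1$ because $\nu$ is a partition, and if $k$ occurs it has at least $\delta$ more by the local fact. These lower bounds add up to exactly $\lambda$'s column lengths, so $\lambda\subseteq\nu$. Since the greedy construction attains $\lambda$, it is the smallest such $\nu$.

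The main obstacle is the local fact itself. We must check that the \emph{maximum} of $d(\mathbf{a})$ and $d(\mathbf{a}')$ suffices: we need that adding $\delta\ge d(\mathbf{a})$ blanks keeps $P$ row-strict, not only that exactly $d(\mathbf{a})$ blanks does. This holds because shifting column $a$ further down only weakens the inequalities it must satisfy against column $a+1$. The same monotonicity is what makes the greedy choice optimal.
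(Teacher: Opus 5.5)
Your route is the paper's. The paper proves this lemma by saying the argument for \Cref{lem:smallest-lambda} goes through verbatim with $d$ replaced by $\delta$. Your ingredients are exactly that argument: the greedy right-to-left construction, the observation that a common shape forces equal blank counts (so the gap between adjacent columns must be at least $\max(d(\mathbf a),d(\mathbf a'))$), the monotonicity remark, and the downward induction for minimality. The last two fill in points the paper leaves implicit.

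However, your indexing is off by one, and as written the construction and the minimality bound are wrong. The constraint between columns $k$ and $k+1$ comes from the subsequences of values $k$ and $k+1$. It is nontrivial exactly when $k+1$ occurs in $T$, whether or not $k$ occurs.

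\begin{itemize}
\item Your rule ``add $\delta$ when $k$ occurs,'' like your restriction of the local fact to values $a$ occurring in $T$, drops precisely the case where $k+1$ occurs but $k$ does not.
\item Your sentence ``if $k=b_{i+1}$ the increment is $\delta(\mathbf b_{i+1},\mathbf b'_{i+1})$'' attaches the pair $(k-1,k)$ to column $k$. Summing those increments gives column lengths $\sum_{b_j\ge k}\delta(\mathbf b_j,\mathbf b'_j)$, not the lemma's $\sum_{b_j\ge k+1}\delta(\mathbf b_j,\mathbf b'_j)$.
\end{itemize}

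The smallest example already breaks it. Take $T=T'=(2)$, so $\lambda=(1)$.
\begin{itemize}
\item Under the first reading, all your lower bounds are $0$. The greedy output is then not even a skew shape, since column $1$ of $\mu$ would be shorter than column $2$.
\item Under the second reading, your bound forces a blank in column $2$, which the valid $\nu=(1)$ does not have.
\end{itemize}

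The repair is local. For every $k$, require (and in the construction, add) exactly $\delta$ of the $(k,k+1)$-subsequences on top of column $k+1$'s blank count, with no occurrence condition. This is zero unless $k+1=b_j$ for some $j$, in which case it equals $\delta(\mathbf b_j,\mathbf b'_j)$. Summing from $m$ down to $k$ then gives exactly the stated $\lambda$, and the same corrected bound drives your minimality induction.

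The paper's own induction step passes from column $k$ to column $k-1$, where ``if $k$ occurs'' is the right trigger. Transplanted into your column-$k$-versus-column-$(k+1)$ framing, that phrase shifts by one.
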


 \begin{lemma}
 \label{lem:non-intersect}
	Suppose $a$ and $a+1$ both appear in $\L(\Omega_{\mathrm{east}})$. Let $r_1<\dots <r_k$ be indices of $a$ or $a+1$ in $\L(\Omega_{\mathrm{east}})$, and let $t_1<\dots<t_k$ be indices of $a$ or $a+1$ in $\L(\Omega_{\north})$. Let $\mathbf{a}_{\mathrm{east}}=(\L(\Omega_{\mathrm{east}}(r_1)),\dots, \L(\Omega_{\mathrm{east}}(r_k)))$, and let $\mathbf{a}_{\north}=(\L(\Omega_{\north}(t_1)),\dots, \L(\Omega_{\north}(t_k)))$. Suppose the total number of $a+1$ in $\mathbf{a}_\mathrm{east}$ is $q$. Let $\delta=\delta(\mathbf{a}_\mathrm{east}, \mathbf{a}_\north)$, and suppose $\delta<\delta+i\le q$. Then the shadow line $s_{i+\delta}$ that passes through the $(i+\delta)$-th $a+1$ in $\mathbf{a}_\mathrm{east}$ (and also in $\mathbf{a}_\north$) does not intersect the shadow line $s'_{i}$ that passes through the $i$-th $a$ in $\mathbf{a}_\mathrm{east}$ (and also in $\mathbf{a}_\north$).
\end{lemma}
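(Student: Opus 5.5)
The plan is to reduce the statement to a planar ordering argument involving only the shadow lines labelled $a$ and $a+1$. The window $\Omega$ is full, so every tile is a cross tile or a bump tile. A shadow line with a label other than $a,a+1$ meets the $a$- and $a+1$-lines only in cross tiles. So, as far as the relative position of $a$- and $a+1$-lines is concerned, I will erase all other lines and keep only the two-letter configuration.

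\emph{Step 1: the pairing of boundary edges.} Shadow lines of the same label never cross, so the $a$-lines are linearly ordered from northeast to southwest, and so are the $a+1$-lines. I would first justify the parenthetical claim. Take the lines passing through the edges of $\Omega_{\mathrm{east}}$ with labels in $\{a,a+1\}$, read from top to bottom. Take also those through $\Omega_{\north}$, read from right to left, as in the convention of the definition of $\oto$. By the argument in the proof of \Cref{prop:reverse-from-label} and periodicity, these two families of lines coincide. The $j$-th $a$ (resp. $a+1$) in $\mathbf{a}_\mathrm{east}$ and the $j$-th $a$ (resp. $a+1$) in $\mathbf{a}_\north$ lie on the same line, because non-crossing forces monotonicity of the order. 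This is what makes $s_{i+\delta}$ and $s'_i$ well defined simultaneously for both sequences.

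\emph{Step 2: boundary position of $s'_i$ versus $s_{i+\delta}$.} Since $\delta\ge d(\mathbf{a}_\mathrm{east})$, the $(\delta+i)$-th $a+1$ in $\mathbf{a}_\mathrm{east}$ is preceded by at least $i$ occurrences of $a$. Hence $s'_i$ crosses $\Omega_{\mathrm{east}}$ strictly above $s_{i+\delta}$. Likewise, $\delta\ge d(\mathbf{a}_\north)$ gives that $s'_i$ crosses $\Omega_{\north}$ strictly to the right of $s_{i+\delta}$. So on both boundary segments, the $a$-line $s'_i$ lies on the northeast side of the $a+1$-line $s_{i+\delta}$.

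\emph{Step 3: ruling out a crossing (the main obstacle).} Suppose $s'_i$ and $s_{i+\delta}$ meet in a cross tile. I would use \Cref{prop:crossing}, which says the line of smaller colour has larger label there, together with the bump-tile geometry. In a bump tile the $a$-line turns from the north edge to the east edge and the $a+1$-line from the west edge to the south edge. Consequently, locally an $a$-line can only pass an $a+1$-line from the northeast side to the southwest side when following them in the northwest-to-southeast direction. Following both lines through one period, between their positions on $\Omega_\north$ and $\Omega_\mathrm{east}$ (identified via the translation by $(n,n)$), each crossing flips their relative side. The side also cannot be restored by any other kind of tile. Step 2 shows the relative side is the same at both ends. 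So an odd number of crossings is impossible, and I expect a monotonicity of the "side" function to rule out an even positive number too.

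The delicate point is exactly this monotonicity: that once an $a$-line is southwest of an $a+1$-line it stays there. I would first try to prove it by a counting argument on the edges of a vertical cut $[(-\infty,j),(\infty,j)]$, using \Cref{lem:color=row}. The number of $a$'s above a given $a+1$ equals the number of entries of column $a$ of $\vec P_j$ that lie above the corresponding entry of column $a+1$. Row strictness of $\vec P_j$ then forces this count to be nondecreasing as $j$ moves west. If that count fails to be monotone, the fallback is a direct induction over the tiles of $\Omega$ using the edge local rules of \Cref{fig:dual_edge_rule}.
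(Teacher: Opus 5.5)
Your Steps 1 and 2 are sound. They match the paper's preliminary observations $y_1<y_2$ and $x_1<x_2$, which follow from the definition of $\delta$. The gap is Step 3, which carries all the content of the lemma.

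Agreement of the relative side on the two boundary segments only gives an even number of crossings. The monotonicity you propose in order to exclude two or more crossings is false.

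\begin{itemize}
\item A cross tile imposes no orientation: either label may be the vertical one. Bump tiles involve no crossing, and \Cref{prop:crossing} constrains only colors, so neither changes this.
\item Concretely, take the left diagram of \Cref{fig:omegaPQ}. It is a full window, and its boundary sequences $(3,2,3,4)$ and $(2,3,4,3)$ have the same content, so it occurs in some $\Gamma_w$ by \Cref{prop:reverse-from-label}. The tile in its second row and third column has vertical label $4$ and horizontal label $3$. There the $3$-line passes from the southwest side of the $4$-line to its northeast side, opposite to your claim.
\item There is also a structural reason no argument of this type can work. Every shadow line is invariant under translation by $(n,n)$: translation preserves labels and the northeast-to-southwest order of the finitely many lines labelled $1$, hence fixes every line. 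So the relative side of two lines on the cut at column $j$ is $n$-periodic in $j$. A monotone periodic function is constant, so your claim would imply that no $a$-line ever crosses an $(a+1)$-line, which the example refutes.
\item Your counting fallback fails for the same reason. Row strictness constrains each $\vec P_j$ separately and says nothing about how the count varies with $j$.
\end{itemize}

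The missing idea is an extremal argument that uses exactly which lines are compared: $s'_{i-1}$ and $s'_i$ are consecutive $a$-lines, and the offset is $\delta$.

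The paper first splits a period into two halves. One is $\Omega$, whose lines run from north to east. The other is the window $\Omega'$ above it, whose lines run from west to south, with $\Omega'_{\mathrm{south}}=\Omega_{\mathrm{north}}$ and $\Omega'_{\mathrm{west}}$ a translate of $\Omega_{\mathrm{east}}$. Your stretch ``between $\Omega_{\mathrm{north}}$ and $\Omega_{\mathrm{east}}$'' is not a full period; it omits this second half.

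In $\Omega$ the paper inducts on $i$. Suppose $s_{\delta+i}$ met $s'_i$. Between two crossings, $s_{\delta+i}$ lies northeast of $s'_i$ and must turn from rightward to downward. That can only happen through the southwest elbow of a bump tile, whose northeast elbow carries an $a$-line strictly northeast of $s'_i$. By induction, $s'_{i-1}$ lies northeast of $s_{\delta+i-1}$, hence of $s_{\delta+i}$. So this $a$-line sits strictly between $s'_{i-1}$ and $s'_i$, or northeast of $s'_1$ when $i=1$, which is a contradiction.

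In $\Omega'$ the paper runs the mirror-image induction, starting from the southwest-most $(a+1)$-line $s_q$ paired with $s'_{q-\delta}$. Periodicity then gives non-intersection everywhere.
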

\begin{proof}
Let $\Omega'$ be the standard window above $\Omega$. 
We  show  that for each $i$, $s_{i+\delta}$ and $s'_i$ do not intersect in $\Omega$, and separately $s_{i+\delta}$ and $s'_i$ do not intersect in $\Omega'$. These two facts imply that $s_{i+\delta}$ and $s_i'$ do not intersect.

	We first show that $s_{\delta+1}$ and $s'_1$ do not intersect in $\Omega$. Suppose $s'_1$ connects $\Omega_\mathrm{east}(y_1)$ and $\Omega_\north (x_1)$, and $s_{\delta+1}$ connects $\Omega_\mathrm{east}(y_2)$ and $\Omega_\north (x_2)$. By assumption, $y_1$ is the position of the first $a$ in $\Omega_\mathrm{east}$, $y_2$ is the position of the $(\delta+1)$-th $a+1$ in $\Omega_\mathrm{east}$, $x_1$ is the position of the first $a$ in $\Omega_\north$, and $x_2$ is the position of the $(\delta+1)$-th $a+1$ in $\Omega_\north$. In particular, $s_1'$ is the northeast-most shadow line in $\Omega$ labelled $a$. By definition of $\delta$, we must have $y_1<y_2$ and $x_1<x_2$. 
	If they did intersect, then $s_{\delta+1}$ must contain a southwest elbow that is strictly to the northeast of $s'_1$. This means that there must be another shadow line labelled $a$ that is strictly northeast of $s_1'$ in $\Omega$, which is a contradiction.
	
	Now suppose by induction that  $s_{\delta+i}$ and $s_i'$ appear but do not intersect in $\Omega$. Suppose $s'_i$ connects $\Omega_\mathrm{east}(y_1)$ and $\Omega_\north (x_1)$, and $s_{\delta+i}$ connects $\Omega_\mathrm{east}(y_2)$ and $\Omega_\north (x_2)$. By the same reasoning as before we have $y_1<y_2$ and $x_1<x_2$. Suppose $s_{\delta+i+1}$ connects $\Omega_\mathrm{east}(y_3)$ and $\Omega_\north(x_3)$. Then we must have $y_2<y_3$ and $x_2<x_3$ and $s_{\delta+i+1}$ is strictly southwest of $s_{\delta+i}$, since they have the same label. Now suppose $s'_{i+1}$ connects $\Omega_\mathrm{east}(y')$ and $\Omega_\north(x')$. Notice that $x_1<x'$ and $y_1<y'$,  $s'_{i+1}$ is strictly southwest of $s'_i$, and there are no other shadow lines labelled $a$ in between $s'_i$ and $s'_{i+1}$. Furthermore, $x'<x_3$ and $y'<y_3$ by the definition of $\delta$. If $s_{\delta+i+1}$ and $s'_{i+1}$ were to intersect, then $s_{\delta+i+1}$ must contain a southwest elbow that is strictly northeast of $s'_{i+1}$, and in the same tile $T$ the northeast elbow is labeled $a$. But this elbow cannot be part of $s'_{i}$, because $T$ is strictly southwest of $s_{\delta+i}$, and since $s_{\delta+i}$ and $s'_i$ do not intersect, $s'_i$ is strictly northeast of $s_{\delta+i}$. This means that there is a shadow line labeled $a$ strictly between $s'_{i}$ and $s'_{i+1}$, which is a contradiction.
	
	To show the non-intersection in $\Omega'$, 
	we first show $s_q$ does not intersect  $s_{q-\delta}'$. Suppose $s_{q}$ connects $\Omega'_
	\mathrm{west}(y_2)$ and $\Omega'_\mathrm{south}(x_2)$ and $s'_{q-\delta}$ connects $\Omega'_
	\mathrm{west}(y_1)$ and $\Omega'_\mathrm{south}(x_1)$ for some $x_1,x_2,y_1,y_2$. By assumption, $y_2$ is the position of the last ($q$-th) $a+1$ in $\Omega'_{\mathrm{south}}=\Omega_{\mathrm{\north}}$, $y_1$ is the position of the $(q-\delta)$-th $a$ in $\Omega'_{\mathrm{south}}$, and therefore $y_1<y_2$. Similarly, $x_1<x_2$. If $s_q$ intersects with $s_{q-\delta}'$, there must be a bump tile southwest of $s_{q-\delta}$ supporting $s_{q-\delta}'$ and another shadow line labeled $a+1$, which contradicts that $s_q$ is the southwest-most shadow line in $\Omega'$.
	
	Now suppose by induction that $s_{\delta+i+1}$ and $s'_{i+1}$ do not intersect in $\Omega'$ for some $i\ge 1$ with $\delta+i+1\le q$. Suppose $s_{\delta+i+1}$ connects $\Omega'_
	\mathrm{west}(y_2)$ and $\Omega'_\mathrm{south}(x_2)$ and $s'_{i+1}$ connects $\Omega'_
	\mathrm{west}(y_1)$ and $\Omega'_\mathrm{south}(x_1)$ for some $x_1,x_2,y_1,y_2$. By similar reasoning as before we have $x_1<x_2$ and $y_1<y_2$. Now suppose $s'_i$ connects $\Omega'_\mathrm{west}(y_0)$ and $\Omega'_\mathrm{south}(x_0)$. Then necessarily $y_0<y_1$ and $x_0<x_1$ and $s'_i$ is strictly northeast of $s'_{i+1}$. Now suppose $s_{\delta+i}$ connects $\Omega'_
	\mathrm{west}(y')$ and $\Omega'_\mathrm{south}(x')$. By the assumption on $\delta$, we must have $y_0<y'<y_2$ and $x_0<x'<x_2$. If $s_{\delta+i}$ intersects with $s'_i$, then $s_i'$ must go through a bump tile $T$ as a northeast elbow, and $T$ must be strictly northeast of $s'_{i+1}$. Since $s_{\delta+i+1}$ is strictly southwest of $s_{i+1}'$, $T$ must support another pipe with label $a+1$ that is different from $s_{\delta+i+1}$ that is strictly between $s_{\delta+i+1}$ and $s_{\delta+i}$, which is a contradiction.
	\end{proof}

\begin{prop}
\label{prop:lambda-minimal}
Suppose $\Omega^m$ is a full standard window of $\Gamma_w$ and $P,Q$ are a pair of skew SYTs of shape $\mu/\lambda$ such 
that $\col_P(i)=\Omega^m_{\mathrm{north}}(i)$ and $\col_Q(i)=\Omega^m_{\mathrm{east}}(i)$.
The partition $\lambda^m$ is the smallest possible partition among such $\lambda$. (In other words, by Lemma~\ref{lem:smallest-lambda-for-pair-of-sequences}, $\lambda^m=\lambda(\Omega^m_{\textup{north}},\Omega^m_{\textup{east}})$.)
	\end{prop}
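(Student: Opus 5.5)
The claim has two halves. First, $\lambda^m$ is itself an admissible $\nu$. Second, no admissible $\nu$ is smaller. By Lemma~\ref{lem:smallest-lambda-for-pair-of-sequences} the minimum $\lambda(\Omega^m_{\textup{north}},\Omega^m_{\textup{east}})$ exists. Its $k$-th column has height $\delta(\mathbf{b}_1,\mathbf{b}'_1)+\dots+\delta(\mathbf{b}_i,\mathbf{b}'_i)$.

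\emph{Admissibility.} Since $\Omega^m$ is full, $\lambda^m=\Gamma_w((m-1)n,n)$ and $\lambda^{m+1}$ sit at the two ends of the east boundary. Going down $\Omega^m_{\textup{east}}$, each edge with label $a$ adds a box in column $a$. The fillings defined by the north and east label sequences are therefore skew SYTs of shape $\lambda^{m+1}/\lambda^m$. The north boundary goes from $\Gamma_w((m-1)n,0)$ to $\lambda^m$, read right to left. Comparing it with the east boundary via the growth rules, or simply by the definition preceding the proposition, the column reading of $\Omega^m_{\textup{north}}$ is also a skew SYT of shape $\lambda^{m+1}/\lambda^m$. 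Hence $\lambda(\Omega^m_{\textup{north}},\Omega^m_{\textup{east}})\subseteq\lambda^m$.

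\emph{Minimality.} I would show the column heights agree. It suffices to show, for each pair of consecutive labels $a,a+1$ occurring on the boundary, that
\[
(\lambda^m)'_a-(\lambda^m)'_{a+1}=\delta(\mathbf{a}_{\textup{east}},\mathbf{a}_{\textup{north}}),
\]
and that columns whose labels are absent match the formula; the latter follows from the telescoping structure in Lemma~\ref{lem:smallest-lambda}. Admissibility already gives ``$\ge$''. For ``$\le$'', suppose the excess in column $a$ over column $a+1$ were strictly larger than $\delta$. By Lemma~\ref{lem:color=row}, the shadow line through the $j$-th occurrence of label $c$ on the east boundary has color $(\lambda^m)'_c+j$. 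Consequently, the $(i+\delta)$-th shadow line labeled $a+1$ and the $i$-th labeled $a$ would have colors differing by the extra amount. Lemma~\ref{lem:non-intersect} says these two lines do not intersect in $\Omega^m$ or in the window above. I would then follow them northwest. They are consecutive in label and never cross, so by Lemma~\ref{lem:unique-repel} and the coloring rule (Definition~\ref{def:coloring-rule}), the label-$a$ line must be the one that repels the label-$(a+1)$ line, or else some line of label $a$ lies strictly between them. Repelling forces equal colors, which contradicts the color gap. For the between-line case, I would use Proposition~\ref{prop:crossing} and the insertion description (Lemma~\ref{lem:affine_insertion}) to show the gap in the column counts forces a bump tile. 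Along the lines of the argument in Lemma~\ref{lem:non-intersect}, that bump tile contradicts the non-intersection.

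\emph{Main obstacle.} This last step is the hard one: converting the non-intersection of Lemma~\ref{lem:non-intersect} into the equality of color offsets. I would attempt it by induction on $i$, using the count of $a$'s preceding each $a+1$ along the east and north boundaries (the quantity defining $d$). Failing that, I would argue that all lines of labels $a,a+1$ passing through $\Omega^m$ already pair up by repulsion within $\Omega^m$ and the window above. This pairing would pin $(\lambda^m)'_a-(\lambda^m)'_{a+1}$ to the number of unpaired $a+1$'s, which is exactly $\delta$.
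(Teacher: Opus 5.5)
Your admissibility half is the same as the paper's first paragraph. The minimality half has a genuine gap, exactly at the step you flag as the main obstacle. You take $s'_i$ (through the $i$-th $a$) and $s_{i+\delta}$ (through the $(i+\delta)$-th $a+1$), which never meet by Lemma~\ref{lem:non-intersect}, and try to show they must repel. You do this via the dichotomy ``$s'_i$ repels $s_{i+\delta}$, or some label-$a$ line lies strictly between them.'' Non-intersection does not by itself imply repelling. Your plan for the second horn (Proposition~\ref{prop:crossing}, the insertion description, ``forcing a bump tile'') is not an argument, so as written the contradiction is never reached. Separately, the columns with an absent label do not follow ``from telescoping.'' The formula for $\lambda(T,T')$ telescopes, but what you need there is a property of $\lambda^m$ itself.

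The missing idea is to use the colors to \emph{locate} the repelling partner rather than to test a candidate. Put $\rho_a=(\lambda^m)'_a-(\lambda^m)'_{a+1}$ and assume $\rho_a>\delta$.
\begin{itemize}
\item By the coloring rule, the line repelling $s_{i+\delta}$ has the same color as $s_{i+\delta}$, namely $(\lambda^m)'_a+i-(\rho_a-\delta)$. This is strictly smaller than $\C(s'_i)=(\lambda^m)'_a+i$.
\item By Lemma~\ref{lem:color=row}, the label-$a$ lines crossing the vertical line $j=n$ are colored $1,2,3,\dots$ from top to bottom. So this repeller lies strictly northeast of $s'_i$.
\item Hence $s'_i$ is sandwiched between two lines that share a bump tile. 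A bump tile is entirely occupied by its two lines, so any line strictly between them must meet one of them.
\item But $s'_i$ cannot meet the repeller, since they have the same label. Nor can it meet $s_{i+\delta}$, by Lemma~\ref{lem:non-intersect}.
\end{itemize}
So both horns of your dichotomy fail, and you have your contradiction.

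The paper runs the same sandwich with the roles shifted. It takes $a$ to be the largest column where $\lambda(\Omega^m_{\textup{north}},\Omega^m_{\textup{east}})$ and $\lambda^m$ differ, so that $\delta_a<\rho_a$. By the coloring rule and Lemma~\ref{lem:color=row}, the first-$a$ line must repel the $(1+\rho_a)$-th $a+1$ line. Meanwhile the $(1+\delta_a)$-th $a+1$ line sits strictly between them and never touches the first-$a$ line. Neither Proposition~\ref{prop:crossing} nor the insertion description is needed.
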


	\begin{proof}
		The edge labels of the growth diagram by definition have the interpretation of adding a box at a certain column; therefore $\Omega_{\text{north}}$ and $\Omega_{\text{east}}$ can be interpreted as skew tableaux of shape $\lambda^{m+1}/\lambda^m$. Call these two skew tableaux $P,Q$, then by construction they have the property $\col_P(i)=\Omega_{\mathrm{north}}(i)$ and $\col_Q(i)=\Omega_{\mathrm{east}}(i)$. We are left to prove the minimality of $\lambda^m$.
		
		Suppose for contradiction that $\lambda'=\lambda(\Omega_\mathrm{north},\Omega_{\mathrm{east}})$ is strictly smaller than $\lambda^m$. Let $a$ be the largest column where $(\lambda')^t_a<(\lambda^m)^t_a$. 
		Let $\delta_a=\delta(\mathbf{a}_\mathrm{north},\mathbf{a}_\mathrm{east})$, where $\mathbf{a}_\mathrm{north}$
		is the subsequence of $\L(\Omega_\mathrm{north})$ consisting of $a$ and $a+1$, and $\mathbf{a}_\mathrm{east}$
		is the subsequence of $\L(\Omega_\mathrm{east})$ consisting of $a$ and $a+1$.
		By construction of $\lambda'$, we must have $\delta_a=(\lambda')^t_{a}-(\lambda')^t_{a+1}$. 
		Let $\rho_a=(\lambda^m)^t_{a}-(\lambda^m)^t_{a+1}$, then $\delta_a<\rho_a.$ By Lemma~\ref{lem:non-intersect}, the shadow line that passes through the $(1+\delta_a)$-th $a+1$ does not intersect the shadow line that passes through the first $a$ and is strictly southwest. By the coloring rule and Lemma~\ref{lem:color=row} the shadow line that passes through the first $a$ should repel the shadow line that passes  through the $(1+\rho_a)$-th $a+1$, a contradiction.
 	\end{proof}

 \begin{corollary}
 \label{cor:color-consistent}
 	If $(\bar{P},\bar{Q},\lambda, N_0)$ satisfies that
 	$\bar P,\bar Q$ are tabloids of the same shape $\mu\vdash n$, 
		 $\ell(\lambda)\leq \ell(\mu)$, $N_0\in \mathbb{Z}$, $P=\lambda\looparrowright \bar P$ and $ Q=\lambda \looparrowright\bar Q$ are both skew SYTs, and it is not possible to slide any column $i$ of $P$ and $Q$ simultaneously up by one square and maintain that both tableaux are skew SYTs (namely, it satisfies the conditions (1), (2)(a), (2)(b), and (4) in \Cref{thm:main}), then the reverse map as defined in \Cref{def:reverse} produces a growth diagram whose colored shadow lines in the $N_0$-th standard window  agree with those in the definition of $\omega(P,Q)$.
 \end{corollary}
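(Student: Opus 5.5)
The plan is to compare, inside $\Omega^{N_0}$, the colors that \Cref{def:coloring-rule} assigns via repelling chains with the colors that $\omega(P,Q)$ assigns via row indices. Both constructions produce the same uncolored shadow lines in $\Omega^{N_0}$: they start from the same boundary labels $\L(\Omega^{N_0}_{\north}(i))=\col_Q(i)$ and $\L(\Omega^{N_0}_{\mathrm{east}}(i))=\col_P(i)$ and apply the same local rules of Figure~\ref{fig:dual_edge_rule}. So it suffices to show, for each boundary edge $e$ of $\Omega^{N_0}$, that the coloring-rule color $\C(e)$ equals $\row_P(i)$ on the east boundary and $\row_Q(i)$ on the north boundary. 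Inside $\omega(P,Q)$ the colors are propagated along the lines from the boundary, so matching on the boundary suffices.

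\textbf{Step 1: compute $\C(e)$ as a row index.} By \Cref{prop:reverse-from-label}, the reverse construction yields an honest affine permutation $w$, so all lemmas about $\Gamma_w$ apply. I would use \Cref{lem:color=row} along the vertical line through the east boundary of $\Omega^{N_0}$. The color of the east edge $e$ is the row in which the corresponding entry sits in the infinite insertion tableau $\vec P_{n}$. Equivalently, by the claim in its proof, $\C(e)=c$ exactly when $e$ is the $c$-th occurrence of its label $\L(e)$ among nonzero edges on that line. The edges on this line above $\Omega^{N_0}$ add up to the partition $\lambda^{N_0}$ at the northeast corner. The $c$-th occurrence of label $a$ therefore adds the box in column $a$ at height $c$ of $\lambda^{N_0}$ extended by the east edges. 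This forces $\C(e)$ to equal the row of that box in the skew tableau $\lambda^{N_0}\looparrowright(\cdot)$ determined by the east labels.

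\textbf{Step 2: identify $\lambda^{N_0}$ with $\lambda$.} This is the main obstacle. We need the partition at the northeast corner of $\Omega^{N_0}$ in the reconstructed diagram to be exactly $\lambda$, and not some other partition compatible with the same column data. I would first argue that the reconstructed window is full, since all marks lie in windows $\Omega^{m_0}$ with $m_0<N_0$ by the proof of \Cref{prop:reverse-from-label}. Then \Cref{prop:lambda-minimal} applies and gives $\lambda^{N_0}=\lambda(\Omega_{\north},\Omega_{\mathrm{east}})$, the minimal $\nu$ of \Cref{lem:smallest-lambda-for-pair-of-sequences}. Condition (2)(b) says no column of $P,Q$ can be slid up simultaneously. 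Column by column, starting from the largest column, this means the number of blank boxes in each column $a$ beyond those in column $a+1$ is exactly $\delta(\mathbf b,\mathbf b')$. Hence $\lambda$ is that minimal partition and $\lambda^{N_0}=\lambda$. I would also need $\ell(\lambda)\le\ell(\mu)$ from (2)(a) so that the rows of $P,Q$ are labelled consistently with the rows of $\bar P,\bar Q$.

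\textbf{Step 3: conclude.} With $\lambda^{N_0}=\lambda$, Step 1 gives $\C(\ori(i))=\row_{\lambda\looparrowright\bar P}(i)=\row_P(i)$. The same argument on the horizontal line through the north boundary gives $\C(\oto(i))=\row_Q(i)$; there one reads along a row and uses the transposed version of \Cref{lem:color=row}, or equivalently periodicity together with the east boundary of the window above. Since shadow lines carry a single color and the line structures agree, the colored shadow lines in $\Omega^{N_0}$ coincide with those of $\omega(P,Q)$.
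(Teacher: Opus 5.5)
Your argument is correct. It is precisely the reasoning the paper leaves implicit in presenting this as a corollary of \Cref{prop:lambda-minimal}: the window $\Omega^{N_0}$ is full, condition (2)(b) forces $\lambda=\lambda(\Omega^{N_0}_{\north},\Omega^{N_0}_{\mathrm{east}})=\lambda^{N_0}$, and \Cref{lem:color=row} then converts colors into row indices of $\lambda\looparrowright\bar P$ and $\lambda\looparrowright\bar Q$.

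For the north boundary, rely on the antidiagonally reflected form of \Cref{lem:color=row}, with edges read right to left as in $\oto$. That form is valid because the local rules and the coloring rule are invariant under the reflection. The ``periodicity plus the window above'' alternative only transports the east colors through the marked window $\Omega^{N_0-1}$, so it does not by itself give $\row_Q$.
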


 	\begin{lemma}\label{lem:jing}
 		Let $\Omega=\Omega^m$ be a full, standard window in $\Gamma_w$. If $\Omega$ is a stable window, then $P(\Omega)=\lambda\looparrowright\bar{P}$, 
 		$Q(\Omega)=\lambda\looparrowright\bar{Q}$
 		for some tabloids $\bar{P}$, $\bar{Q}$, and partition $\lambda$ that satisfy conditions (1) and  (2) of Theorem~\ref{thm:main}. 
 		\end{lemma}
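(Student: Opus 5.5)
We set $\lambda\coloneqq\lambda^m$ and $\mu\coloneqq\mu^m=\lambda^{m+1}-\lambda^m$. By \Cref{lem:stable-partition}, $\mu$ is a partition of $n$ because $\Omega$ is stable. The tabloids are read off the colors: $\bar P$ has $i$-th row $\{j:\C(\Omega_{\mathrm{north}}(j))=i\}$ and $\bar Q$ has $i$-th row $\{j:\C(\Omega_{\mathrm{east}}(j))=i\}$. These are the conventions matching $P(\Omega),Q(\Omega)$; swap the roles if needed. The plan has three steps: show $P(\Omega)=\lambda\looparrowright\bar P$ and $Q(\Omega)=\lambda\looparrowright\bar Q$, check condition (1), and then verify (2)(a)--(c).

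\textbf{Row identification.} We would apply \Cref{lem:color=row} to the vertical line through the east boundary of $\Omega$. That lemma says the color of an edge is the row of the corresponding entry in the infinite insertion tableau $\vec P_j$. The partitions along this line are $\lambda^m\subset\dots\subset\lambda^{m+1}$, so the $k$-th edge of $\Omega_{\mathrm{east}}$ adds a box in column $\L$ and row $\C$. Hence $Q(\Omega)$ is a skew SYT of shape $\lambda^{m+1}/\lambda^m$ whose entry $k$ lies in row $\C(\Omega_{\mathrm{east}}(k))$. Since rows of a skew SYT are increasing, $Q(\Omega)$ is exactly $\lambda\looparrowright\bar Q$, provided row $i$ of $\lambda^{m+1}/\lambda^m$ has $\mu_i$ boxes starting after $\lambda_i$. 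This holds by definition of $\mu$.

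For the north boundary there is no vertical line, so we use the stability remark instead. Since $\C(\Omega^i_{\mathrm{north}})=\C(\Omega^{i+1}_{\mathrm{north}})$ and the north boundary of $\Omega^m$ is the south boundary of $\Omega^{m-1}$, we argue via a horizontal analogue of \Cref{lem:color=row}. By the symmetry of the dual growth rules (transpose), coloring on horizontal lines likewise records rows of the tableau read along that line. Then $P(\Omega)=\lambda\looparrowright\bar P$ in the same way. Both tabloids have shape $\mu$, since the number of edges of color $i$ equals $\mu_i$. This gives (1).

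\textbf{Conditions (2)(a)--(c).} For (2)(b), \Cref{prop:lambda-minimal} says $\lambda^m$ is the minimal $\nu$ from \Cref{lem:smallest-lambda-for-pair-of-sequences}. If some column could slide up simultaneously in both tableaux, that would produce a smaller admissible $\nu$, contradicting minimality. For (2)(a), every row $r\le\ell(\lambda)$ of $\lambda$ must be a row used by some color. Otherwise $\lambda_r>0$ while $\mu_r=0$, and the minimality from \Cref{lem:smallest-lambda-for-pair-of-sequences} forces columns of blank boxes to end where entries begin, so some column could be shortened; again this contradicts \Cref{prop:lambda-minimal}. For (2)(c), the colored shadow lines in $\Omega$ coincide with those of $\omega(P,Q)$, since both are determined by the boundary labels via the same growth rules and colors propagate along lines. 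Stability then says no bump tile carries two colors, and $\mu$ is a partition, so $\omega(P,Q)$ is saturated.

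\textbf{Main obstacle.} The delicate step is the north-boundary identification, together with (2)(a), since \Cref{lem:color=row} is stated only for vertical lines. We would first try to derive the north statement from the east statement by tracing each shadow line through $\Omega$. Each line entering at $\Omega_{\mathrm{north}}$ exits at the west boundary, which equals $\Omega^{m+1}_{\mathrm{east}}$, and keeps its color. Since $\C(\Omega^{m+1}_{\mathrm{east}})=\C(\Omega^m_{\mathrm{east}})$, the row data on the north edges is then controlled by the east data and the labels. The labels then fix the column placement, which yields the row-increasing filling.
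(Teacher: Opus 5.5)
Your proposal is correct, but it is organized differently from the paper's proof.

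\textbf{How the routes differ.} The paper simply \emph{defines} $\bar P,\bar Q$ as the row contents of $P(\Omega),Q(\Omega)$. Since $\lambda^{m+1}-\lambda^m$ is a partition, $P(\Omega)=\lambda\looparrowright\bar P$ and condition (1) are then tautological. It asserts (2)(a) ``by definition'', gets (2)(b) from \Cref{prop:lambda-minimal} exactly as you do, and reads off (2)(c) from stability. You instead define the tabloids by colors. That forces you to prove that color equals row on both boundaries: \Cref{lem:color=row} on the east, a symmetry argument on the north. This is more than the lemma as literally stated needs, but it buys two things the paper leaves implicit:
\begin{enumerate}
\item Your tabloids are exactly those of \Cref{def:dars}, which is what the proof of \Cref{thm:main} actually uses.
\item The boundary colors $\row_P,\row_Q$ of $\omega(P,Q)$ really agree with the global colors of the shadow lines in $\Omega$. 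This must hold before stability can be invoked for (2)(c).
\end{enumerate}

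\textbf{Condition (2)(a).} Your minimality argument is a genuine proof where the paper offers none, but state it precisely rather than via the vague remark about blank columns. Let $R=\ell(\lambda)$ and suppose $\mu_R=0$; since $\mu$ is a partition, $\mu_{R+1}=0$. Then the box $(R,\lambda_R)$ is a corner of both $\lambda$ and $\lambda+\mu$, and it lies outside the skew shape. Deleting it from both leaves $P,Q$ as skew SYTs with the same column sequences over a strictly smaller inner shape, contradicting \Cref{prop:lambda-minimal}.

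\textbf{Two slips to fix.}
\begin{enumerate}
\item The symmetry you need is not the transpose. Transposition does not preserve the dual rules: the marked tile becomes one with $0$'s on the south and west, which is not a legal tile. The right symmetry is the reflection exchanging north with east and south with west, as in \Cref{prop:symmetry}. You should also check three things about it:
\begin{itemize}
\item it preserves the northeast-to-southwest order of the label-$1$ lines and the repelling relation, hence the coloring;
\item it sends the zero region to the zero region, hence vertex partitions to vertex partitions;
\item it turns the right-to-left indexing of $\oto$ into the top-to-bottom indexing of a vertical line.
\end{itemize}
\item The fallback in your last paragraph has the geometry backwards. In a full window, lines entering through the north boundary leave through the east boundary, and lines leaving through the south entered from the west (see the proof of \Cref{prop:reverse-from-label}). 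Moreover, tracing colors alone would not give the row identification, so that paragraph should simply be dropped.
\end{enumerate}
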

 	\begin{proof}
 	
 		Let $P=P(\Omega)$ and $Q=Q(\Omega)$ be the corresponding skew SYTs. If $\Omega$ is stable,  we must have $\lambda^{m+1}-\lambda^m$ is a partition. Let $\lambda =\lambda^m$, $\bar{P}$ and $\bar{Q}$ be tabloids of shape $\lambda^{m+1}-\lambda^m$ with the same row contents as $P$ and $Q$, respectively. By definition, these satisfy conditions (1) and (2)(a).
 		By Proposition~\ref{prop:lambda-minimal}, it is not possible to slide any column $i$ of $P$ and $Q$ simultaneously by one box without violating the conditions of skew SYTs, and thus (2)(b) is satisfied.    Notice that $\omega(P,Q)$ by construction has the same edge labels and north-east running shadow lines. The stability condition ensures that no bump tile in $\Omega$ can have two distinct colors, and thus $\omega(P,Q)$ is saturated, satisfying (2)(c).   
 		\end{proof}

 \begin{lemma}
 \label{lem:omega-saturated}
 	Suppose  $(\bar{P},\bar{Q},\lambda, N_0)$ satisfies the conditions in \Cref{cor:color-consistent}, and furthermore $\omega(P,Q)$ is saturated (\Cref{def:omega-saturated}). Then $\Omega^{N_0}$ in the image of the reverse map is a stable window in $\Gamma_w$ for $w=\mathrm{RS}^\dagger(\bar{P},\bar{Q},\lambda, N_0)$. If it also satisfies (3) then $\Omega^{N_0}$ is the first stable window.
 \end{lemma}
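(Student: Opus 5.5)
We need to prove two things: $\Omega^{N_0}$ is stable in the growth diagram produced by $\RS^\dagger$, and, if (3) also holds, no earlier window is stable.

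\textbf{Stability of $\Omega^{N_0}$.} First, by \Cref{prop:reverse-from-label}, $\RS^\dagger$ yields an honest affine permutation $w$, and its growth diagram is the constructed one. The edge labels on $\Omega^{N_0}$ are prescribed by $P$ and $Q$, and the rest of $\Gamma_w$ is forced by the growth rule and periodicity. By \Cref{cor:color-consistent}, the colored shadow lines of $\Gamma_w$ inside $\Omega^{N_0}$ coincide with those of $\omega(P,Q)$. In particular, for each edge $e$ on the north or east boundary of $\Omega^{N_0}$, its color $\C(e)$ is the row index in $Q$ or $P$ respectively. Saturation says that no bump tile of $\omega(P,Q)$ supports two colors, so every bump tile of $\Omega^{N_0}$ supports a single color.

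Next we propagate colors through the window. In a cross tile, each color passes straight through. In a bump tile, the two incoming edges and the two outgoing edges all carry the same color. The marked tiles all lie above the window, because the labels on $\Omega^{N_0}$ are positive. Consequently the multiset of colors along each row of tiles is preserved, and the color sequence of $\Omega^{N_0}_{\mathrm{west}}$ equals that of $\Omega^{N_0}_{\mathrm{east}}$. The same holds for north and south.

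For the remaining condition of stability, a color $r$ appears on $\Omega^{N_0}_{\mathrm{east}}$ exactly $\mu_r$ times, since $\mu$ is the shape of $\bar P$. Because $\mu$ is a partition, smaller colors appear at least as often as larger ones. Hence $\Omega^{N_0}$ is stable. As an alternative route, one can use \Cref{lem:stable-partition}: check $\lambda^{N_0+1}-\lambda^{N_0}=\mu$ from the shape of $P$, and $\lambda^{N_0+2}-\lambda^{N_0+1}=\mu$ by the label-shift argument of \Cref{lem:stability}.

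\textbf{Minimality when (3) holds.} Suppose $\Omega^{N_0-1}$ were stable. By \Cref{lem:stable-partition}, the difference $\mu^{N_0-1}=\lambda^{N_0}-\lambda^{N_0-1}$ is then the partition $\mu^{N_0}=\mu$, so $\lambda'=\lambda-\mu=\lambda^{N_0-1}$ is a partition. Applying \Cref{lem:jing} to $\Omega^{N_0-1}$, which is full, shows that $\lambda'$ together with the tabloids read off $\Omega^{N_0-1}$ satisfies (1) and (2). Stability keeps the color sequences the same, so these tabloids are exactly $\bar P,\bar Q$. This contradicts (3). Finally, \Cref{lem:stability} rules out any stable window before $\Omega^{N_0-1}$, since stability would then propagate forward to $\Omega^{N_0-1}$.

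\textbf{Main obstacle.} The delicate point is that \Cref{lem:jing} needs $\Omega^{N_0-1}$ to be full. If $\Omega^{N_0-1}$ contains a marked tile, we must argue separately that it cannot be stable. I would show this using the domination statement of \Cref{lem:stable-partition}, or by noting that a marked tile introduces label $1$ on an edge whose shadow line must have the correct color, which forces the window label counts to differ from $\mu$.
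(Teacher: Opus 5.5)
\textbf{Gap in the stability half.} You pass from ``no bump tile of $\omega(P,Q)$ carries two colors'' to ``every bump tile of $\Omega^{N_0}$ carries one color''. But $\omega(P,Q)$ contains and colors only the $n$ shadow lines that enter the window through its north boundary and leave through its east boundary. Likewise, \Cref{cor:color-consistent} only identifies the colors of those lines. Since $\Omega^{N_0}$ is full, the other $n$ shadow lines through it enter through $\Omega^{N_0}_{\mathrm{west}}$ and leave through the south boundary. Their colors come from the global rule of \Cref{def:coloring-rule}, and nothing in $\omega(P,Q)$ records them. Saturation places no constraint on a bump tile in which one or both elbows lie on such a west--south line. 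Such a tile may a priori carry two colors, and then your row-by-row propagation giving $\C(\Omega^{N_0}_{\mathrm{west}})=\C(\Omega^{N_0}_{\mathrm{east}})$ breaks down. Showing that these bumps are monochromatic is the real content of the lemma, and it is where the paper's proof does its work. The paper argues by induction on the color. First, \Cref{prop:crossing} and the repelling structure show that a color-$1$ line cannot bump a line of a different color. Then a comparison of label ranges excludes bichromatic bumps for the higher colors: the lines of color $r$ meeting the window have labels at most $\lambda_r+2\mu_r\le\lambda_{r-1}+2\mu_{r-1}$.

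\textbf{The alternative route and the minimality half.} Your alternative route does not get around this. The label-shift argument of \Cref{lem:stability} relies on every bump tile of the window being monochromatic; that is exactly where $\C(e)=\C(f)=\C(g)=\C(h)$ is used. Invoking it for $\Omega^{N_0}$ therefore presupposes the stability you are trying to prove. The domination part of \Cref{lem:stable-partition} only tells you that $\mu^{N_0+1}$ dominates $\mu^{N_0}=\mu$, not that they are equal. Your minimality half is essentially the paper's argument. It tacitly uses $\lambda^{N_0}=\lambda$, which follows from \Cref{prop:lambda-minimal} together with (2)(b). The fullness issue you flag is harmless. If a window has a marked tile in some row, the east boundary edge of that row has label $0$, so it lies on no colored shadow line, while the west boundary edge of that row has positive label. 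Hence the two color sequences cannot agree, and a non-full window is never stable.
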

 \begin{proof}
 	Recall that $\omega(P,Q)$ being saturated means that if two north-east running shadow lines in $\omega(P,Q)$ are supported by a same bump tile, then they must have the same color as defined in the construction of $\omega(P,Q)$. By \Cref{cor:color-consistent}, the colors in $\omega(P,Q)$ are consistent with the colors of the shadow lines in the definition of the growth diagram, it suffices to show that no bump tile in $\Omega^{N_0}$ supports two differently colored shadow lines. Since the north-east running shadow lines already satisfy this property, any potential violation must involve the west-south  running shadow lines. First notice that the shadow lines colored 1 cannot bump with a shadow line with a different color. If there were a west-south running shadow line $s$ not colored 1 that bumps with a shadow line $t$ colored with 1, it must be that $\L(s)>\L(t)$. If $t$ does not cross with $s$, $s$ would have been colored with 1, so it must be that $t$ crosses with $s$. Then by \Cref{prop:crossing}, it must be that $\C(s)<\C(t)$, which is not possible because 1 is the smallest color. Now, there must be $\mu_1$  shadow lines colored with 1 that travel from north to east, and $\mu_1$  shadow lines colored with 1 that travel from west to south. The last of such must have label $\lambda_1+2\mu_1$. All bumps involving any of these $2\mu_1$ shadow lines colored with 1 are the same color.  There are at most $2\mu_2\le 2\mu_1$ shadow lines of color 2, and thus the last of such has label at most $\lambda_2+2\mu_2\le \lambda_1+2\mu_1$. By similar reasoning as before, if there is a shadow line not colored 2 that bumps with a shadow line colored with 2, it must have a smaller color. To be colored 1, the label must be larger than $\lambda_1+2\mu_1$, so this is not possible. Further colors follow from a similar argument.
 	
 	For the last claim, if $\Omega^{N_0}$ is not the first stable window, by \Cref{lem:stability}, $\Omega^{N_0-1}$ is also a stable window. 
 	By the definition of stable windows, all stable windows give rise to the same tabloids $\bar{P}$ and $\bar{Q}$. 
 	By \Cref{lem:jing}, this contradicts that $(\bar{P},\bar{Q},\lambda, N_0)$ satisfies condition (3).
 \end{proof}
 
 \begin{corollary}
 	\label{cor:first-stable}
 	If $\Omega=\Omega^m$ is the first stable window in $\Gamma_w$, then $P(\Omega)=\lambda\looparrowright\bar{P}$, 
 		$Q(\Omega)=\lambda\looparrowright\bar{Q}$
 		for some tabloids $\bar{P}$, $\bar{Q}$, and partition $\lambda$ that satisfy condition (3) of Theorem~\ref{thm:main}
 \end{corollary}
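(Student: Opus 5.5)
By \Cref{lem:jing}, the tuple $(\bar P,\bar Q,\lambda)$ read off from the first stable window $\Omega=\Omega^m$ already satisfies conditions (1) and (2). It therefore remains to rule out the possibility that $\lambda'\coloneqq\lambda-\mu$ is a partition satisfying all of (2). I will argue by contradiction: assume such a $\lambda'$ exists, and show that $\Omega^{m-1}$ is then also a stable window. This contradicts the choice of $\Omega^m$ as the first stable window.

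Set $P'=\lambda'\looparrowright\bar P$ and $Q'=\lambda'\looparrowright\bar Q$. These have shape $(\lambda'+\mu)/\lambda'=\lambda/(\lambda-\mu)$. Since $\lambda=\lambda^m$ and $\mu=\lambda^{m+1}-\lambda^m$, I will compare them with the boundary data of $\Omega^{m-1}$.

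The plan is to run the reverse construction of \Cref{def:reverse} on $(\bar P,\bar Q,\lambda',m-1)$. Conditions (1), (2)(a), (2)(b) hold for this tuple by assumption, so \Cref{cor:color-consistent} and \Cref{lem:omega-saturated} give a growth diagram $\Gamma_{w'}$ in which $\Omega^{m-1}$ is stable, with east and north boundary colors given by $\bar P,\bar Q$. By \Cref{lem:stability}, $\Omega^m$ in $\Gamma_{w'}$ is then stable too. Its edge labels are obtained by shifting each label of color $r$ by $\mu_r$, as in the proof of \Cref{lem:stability}. So its boundary labels are the column indices of $\lambda\looparrowright\bar P$ and $\lambda\looparrowright\bar Q$, and its colors are again those of $\bar P,\bar Q$.

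It follows that $\Omega^m$ of $\Gamma_{w'}$ and $\Omega^m$ of $\Gamma_w$ carry identical north and east boundary labels. Filling a standard window from its north and east boundaries, and propagating to all other windows as in \Cref{def:reverse}(3),(4), is uniquely determined. Hence $\Gamma_{w'}=\Gamma_w$ by \Cref{prop:reverse-from-label}, so $w'=w$. Then $\Omega^{m-1}$ is a stable window of $\Gamma_w$, which is the desired contradiction.

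The main obstacle is justifying that the diagram built by the reverse map coincides with $\Gamma_w$ as defined via the boundary rule, rather than being merely some diagram satisfying the local rules. In particular, one must know that the northeast partitions agree, i.e.\ that the window boundaries computed from $\lambda'$ really correspond to $\lambda^{m-1}=\lambda'$. I would handle this with \Cref{prop:lambda-minimal}, which identifies $\lambda^{m-1}$ with the minimal $\lambda(\Omega_{\mathrm{north}},\Omega_{\mathrm{east}})$, together with the "cannot slide" condition (2)(b) for $\lambda'$, which forces this minimal partition to be exactly $\lambda'$ by \Cref{lem:smallest-lambda-for-pair-of-sequences}.

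A secondary point to check is that $\Omega^{m-1}$ is full in $\Gamma_w$. This follows because the labels there are all positive: $\lambda'+\mu$ has entries at least $1$ in every column used. Consequently no dots lie in or below that window.
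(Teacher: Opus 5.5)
Your proposal is correct and follows the paper's argument: assume (3) fails, apply \Cref{lem:omega-saturated} to $(\bar P,\bar Q,\lambda-\mu,m-1)$ to obtain a stable $\Omega^{m-1}$, and so contradict the choice of $\Omega^m$ as the first stable window. The paper's two-line proof tacitly identifies $\lambda-\mu$ with $\lambda^{m-1}$ and the reverse-map diagram with $\Gamma_w$; your steps supply exactly that identification, namely \Cref{prop:lambda-minimal} together with (2)(b), the downward label shift from \Cref{lem:stability}, and uniqueness of the filling from one window's north and east boundary.
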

 \begin{proof}
 	If condition (3) is not satisfied, then $\lambda^{m}-\lambda^{m-1}$ is a partition and satisfies all conditions in (2). By \Cref{lem:omega-saturated}, $\Omega^{m-1}$ is a stable window, contradicting that $\Omega^m$ is the first.
 \end{proof}
  The following two lemmas relate the index of the (extended) affine permutation $w$ with the size of $\lambda^m$, the partition at the northeast corner of a full standard window $\Omega^m$.

 \begin{lemma}
 \label{lem:eq}
 	For $w\in \tilde S_n^{(i)}$ and any $(a,b)$ with $a-b=i$, we have that%
 	\[\#\{t\;\big|\;t> b, w(t)\leq  a\} = \# \{t\;\big|\;t\leq b,w(t)> a\}.\]
 \end{lemma}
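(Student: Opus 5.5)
We need to prove: for $w\in\tilde S_n^{(i)}$ and $a-b=i$, the number of $t>b$ with $w(t)\le a$ equals the number of $t\le b$ with $w(t)>a$.

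The plan is a counting argument: write both sides as parts of one quantity that depends only on $a-b$, and then evaluate that quantity at a convenient base point. Define
\[
f(a,b)=\#\{t>b:\ w(t)\le a\}-\#\{t\le b:\ w(t)>a\}.
\]
Both sets are finite. For the first set, periodicity gives $w(t)\ge t+\min_{1\le s\le n}(w(s)-s)$, so only finitely many $t>b$ have $w(t)\le a$. The second set is finite by the symmetric bound.

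The first step is to show $f(a,b)=f(a+1,b+1)$. Passing from $(a,b)$ to $(a+1,b+1)$ changes the sets in two ways.
\begin{itemize}
\item The index $t_0=b+1$ leaves the range $t>b$ and enters the range $t\le b+1$.
\item The value $v_0=a+1$ leaves the range $w(t)>a$ and enters the range $w(t)\le a+1$.
\end{itemize}
Let $t_1=w^{-1}(a+1)$. A direct check of each contribution shows that the net change in $f$ is
\[
-[w(t_0)\le a] - [w(t_0)> a+1] + [t_1>b+1] + [t_1\le b].
\]
Here $[\cdot]$ is the indicator. When $t_1\neq t_0$, the first two indicators sum to $1$ and so do the last two, so the change is $0$. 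When $t_1=t_0$, all four indicators vanish, so the change is again $0$. I expect this bookkeeping to be the only fiddly step. To be safe I would instead verify it by computing the shift in $b$ and the shift in $a$ separately and adding them, which avoids double-counting the tile at $(t_0,v_0)$.

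The second step is to compute $f$ at one base point per value of $a-b$. The cleanest choice is $b=0$ and $a=i$. Reduce each $t$ to its residue: write $t=s+kn$ with $1\le s\le n$, and set $c_s=w(s)-s$. Then the first set counts pairs $(s,k)$ with $k\ge0$ and $s+kn+c_s\le i$. The second set counts pairs with $k<0$ and $s+kn+c_s>i$.

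For a fixed $s$, the difference between these two counts has a closed form. It equals the number of $k\in\mathbb{Z}$ with $kn+s+c_s\le i$ minus the number of $k<0$, taken in a regularized sense. This amounts to
\[
\left\lfloor \frac{i-s-c_s}{n}\right\rfloor + 1
\]
when counted correctly, up to sign conventions. Summing over $s$ should give
\[
\sum_s \left\lfloor\frac{i-w(s)}{n}\right\rfloor + n.
\]

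Rather than fight the floors, I would use a simpler route. It suffices to show that $f(a,b)=\frac{1}{n}\sum_{s=1}^n (a-b-(w(s)-s))$ up to a fixed normalization. A cleaner way is to check that $f(a,b+n)-f(a,b)$ equals $-n$. Shifting $b$ by $n$ moves each residue class by exactly one element, so each of the $n$ classes loses one contribution from the first set or gains one in the second. Similarly $f(a+n,b)-f(a,b)=n$. Combining these with the diagonal invariance from the first step, $f(a,b)=g(a-b)$ with $g(x+n)=g(x)+n$.

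The third step is to pin down $g(i)$ exactly. The most convenient approach is to apply the single-step analysis to $b$ alone: $f(a,b+1)-f(a,b)=-1$. Check this by the same casework: $t_0=b+1$ either leaves the first set when $w(t_0)\le a$, or enters the second set when $w(t_0)>a$, and each case contributes $-1$. The symmetric statement is that $f(a+1,b)-f(a,b)=+1$. Hence $f(a,b)=a-b+C$ for a constant $C$.

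Finally, determine $C$ by summing over a window. Take $b=0$ and let $a\to-\infty$ along a suitable averaging. The cleanest method is to average $f(a,b)$ over $b=1,\dots,n$ at a fixed large $a$: this gives a count over all $t$ and all residue classes, which evaluates to $\sum_s (w(s)-s)/n=i$ with the appropriate sign. This yields $C=-i$, so $f(a,b)=a-b-i$, which vanishes exactly when $a-b=i$. This normalization computation is where sign errors are likely, so I would confirm the result against the identity permutation, where $i=0$ and $f(a,a)=0$ holds trivially.
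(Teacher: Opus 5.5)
Your bookkeeping is correct: $f(a,b+1)-f(a,b)=-1$ and $f(a+1,b)-f(a,b)=+1$ (the latter because $a+1$ has exactly one preimage). So $f(a,b)=a-b+C(w)$, and Step 1 follows automatically. But this holds for every $w\in\tilde S_n$. The whole content of the lemma is that $C(w)=-i$, which is the only place the hypothesis $w\in\tilde S_n^{(i)}$ enters, and you never establish it.

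Your sentence claiming that averaging over $b=1,\dots,n$ ``evaluates to $\sum_s(w(s)-s)/n=i$ with the appropriate sign'' is an assertion, not a computation. The proposed averaging also does not close the argument by itself. It removes only the $b$-dependence and leaves
\[ C=\sum_{s=1}^n\Bigl\lfloor\frac{a-w(s)}{n}\Bigr\rfloor+n-a. \]
Evaluating this needs the fact that $w(1),\dots,w(n)$ are pairwise incongruent modulo $n$, or else an additional average over $n$ consecutive values of $a$. The identity-permutation check cannot substitute for this. It has $i=0$, so it cannot detect the sign, and one example says nothing about $C$ depending on $w$ only through $i$.

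The repair is short, and your abandoned Step 2 already had the right formula. Set $\alpha_s=\lfloor (a-w(s))/n\rfloor$ and $\beta_s=\lfloor (b-s)/n\rfloor$. For each residue class $s$, the two counts are the numbers of $k$ with $\beta_s<k\le\alpha_s$ and with $\alpha_s<k\le\beta_s$. Their difference is therefore exactly $\alpha_s-\beta_s$, with no regularization needed:
\[ f(a,b)=\sum_{s=1}^n\Bigl(\Bigl\lfloor\frac{a-w(s)}{n}\Bigr\rfloor-\Bigl\lfloor\frac{b-s}{n}\Bigr\rfloor\Bigr). \]
At $b=0$ the second sum is $-n$. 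The numbers $a-w(s)$ run over a complete residue system modulo $n$, so the first sum is $\frac1n\bigl(na-\sum_s w(s)-\binom n2\bigr)=a-i-n$, using $\sum_s w(s)=ni+\binom{n+1}2$. Hence $f(a,0)=a-i$, so $f(a,b)=a-b-i$ and the lemma follows.

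Completed this way, your proof is a direct count that yields the general formula. The paper instead never computes the constant. It checks the base case $\tau_i$ and shows that $\Delta_b$ is invariant under $w\mapsto ws_r$, where only the swapped pair $\{b,b+1\}$ matters. In your language, that amounts to proving $C(ws_r)=C(w)$ and $C(\tau_i)=-i$.
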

 
 \begin{proof}
  Define
$
\Delta_b(w)
=
\#\{t\le b\mid w(t)>a\}
-
\#\{t>b\mid w(t)\le a\}.
$
These sets are finite because $w(t)-t$ is $n$-periodic, hence bounded.
We prove that $\Delta_b(w)=0$ by induction on the length of $w$. Let
$
\tau_i(t)=t+i.
$
This is the unique length-zero element in $\widetilde S_n^{(i)}$. Every element of $\widetilde S_n^{(i)}$ can be written as
$
w=\tau_i s_{r_1}\cdots s_{r_\ell},
$
where the $s_{r_i}$'s are the affine simple reflections, and $\ell=\ell(w)$.

First consider the base case $w=\tau_i$. Then
$
w(t)=t+i.
$
Since $a=b+i$, we have
\[
w(t)>a
\iff
t+i>b+i
\iff
t>b.
\]
Thus there is no $t\le b$ with $w(t)>a$. Similarly, there is no $t>b$ with $w(t)\le a$. Hence
$
\Delta_b(\tau_i)=0.
$

It remains to show that $\Delta_b$ is unchanged by right multiplication by a simple reflection. Let $s_r$ be an affine simple reflection.  Then $ws_r$ is obtained from $w$ by swapping the values $w(p)$ and $w(p+1)$ for all $p\equiv r \pmod n$.
Indeed,  if $p+1\le b$ or $p>b$, then swapping $w(p)$ and $w(p+1)$ clearly does not change their total contribution to $\Delta_b$.
The only nontrivial case is when  $p=b$. Then the old contribution of the pair $\{b,b+1\}$ is
$
\mathbf 1_{w(b)>a} - \mathbf 1_{w(b+1)\le a}.
$
After multiplying by $s_r$, the values $w(b)$ and $w(b+1)$ are swapped, so the new contribution is
$ \mathbf 1_{w(b+1)>a}
- \mathbf 1_{w(b)\le a}.
$
But these two quantities are equal, since
$\mathbf 1_{x>a}=1-\mathbf 1_{x\le a}.$
Hence $\Delta_b(ws_r)=\Delta_b(w)$.
\end{proof}

 \begin{lemma}
 \label{lem:affine_perm_signature}
 	Suppose $w\in \tilde S_n^{(i)}$. For any full standard window $\Omega^m$ and $\lambda^m$ the partition at its northeast corner, we have $|\lambda^m| = n(m-2)-i.$
 \end{lemma}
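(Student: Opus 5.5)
The plan is to express $|\lambda^m|$ as a count of dots of $w$ in a quadrant, and then evaluate that count using \Cref{lem:eq} together with the fullness of $\Omega^m$.

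\emph{Step 1: $|\lambda^m|$ as a dot count.} By definition $\lambda^m=\Gamma_w((m-1)n,n)$. Along the vertical line $[(-\infty,n),((m-1)n,n)]$, each edge with a nonzero label adds exactly one box. By \Cref{lem:affine_insertion}, the tableau $P_{(m-1)n,n}$ is the insertion tableau of the finite word $(w_k)_{k>n,\,w_k\le (m-1)n}$. Its number of boxes is the length of that word, so
\[
|\lambda^m|=\#\{t\;\big|\;t>n,\ w(t)\le (m-1)n\}.
\]
More generally, for every vertex $(a,b)$ we have $|\Gamma_w(a,b)|=\#\{t>b : w(t)\le a\}$.

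\emph{Step 2: a general counting identity.} Write $C(a,b)=\#\{t>b:w(t)\le a\}-\#\{t\le b: w(t)>a\}$. I claim that $C(a,b)=a-b-i$ for all $a,b$. When $a-b=i$ this is exactly \Cref{lem:eq}. For the general case, fix $b$ and compare $C(a,b)$ with $C(a-1,b)$. Because $w$ is a bijection, exactly one $t$ has $w(t)=a$.
\begin{itemize}
\item If that $t$ satisfies $t>b$, raising the threshold from $a-1$ to $a$ adds $1$ to the first count and leaves the second unchanged.
\item If $t\le b$, raising the threshold removes $1$ from the second count and leaves the first unchanged.
\end{itemize}
In both cases $C(a,b)=C(a-1,b)+1$. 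Combined with the base value $C(b+i,b)=0$ from \Cref{lem:eq}, this gives $C(a,b)=a-b-i$ for every $a$, whether $a$ lies above or below $b+i$. The sets involved are finite because $w(t)-t$ is bounded.

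\emph{Step 3: apply fullness.} Take $a=(m-1)n$ and $b=n$. Fullness of $\Omega^m$ says $w(j)\le (m-1)n$ for $1\le j\le n$. Periodicity then gives $w(j-kn)=w(j)-kn\le (m-1)n$ for all $k\ge 0$, so $w(t)\le (m-1)n$ for every $t\le n$. Hence $\#\{t\le n : w(t)>(m-1)n\}=0$. By Steps 1 and 2,
\[
|\lambda^m|=C((m-1)n,n)=(m-1)n-n-i=n(m-2)-i.
\]

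\emph{Main obstacle.} The step I expect to need the most care is Step 1. One has to check that the size of $\Gamma_w$ at a vertex really equals the number of dots in the quadrant to its southwest in the rotated sense, namely rows $\le a$ and columns $>b$. This follows from the boundary convention (partitions are empty far to the north and east) together with \Cref{lem:affine_insertion}. The sign and direction conventions there must be tracked carefully, since the dual diagram builds partitions from the northeast.
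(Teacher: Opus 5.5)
Your proof is correct and follows essentially the same route as the paper: Lemma~\ref{lem:affine_insertion} gives $|\lambda^m|=\#\{t>n : w(t)\le (m-1)n\}$, Lemma~\ref{lem:eq} is applied at the threshold $n+i$, bijectivity of $w$ moves the threshold up to $(m-1)n$, and fullness plus periodicity kill $\#\{t\le n : w(t)>(m-1)n\}$. The only difference is organizational: you move the threshold one unit at a time via the identity $C(a,b)=a-b-i$, while the paper splits the set at $n+i$ and counts the values in $(n+i,(m-1)n]$ in one step. The paper's step tacitly uses $n+i\le (m-1)n$, which holds automatically for a full window; your identity holds for every $a$, so you never need it.
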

 \begin{proof}
 	By Lemma~\ref{lem:affine_insertion}, we know that 
 	$\lambda^m$ is the shape of the tableau $P_{n(m-1),n}(w)=P((w_{k})_{k>n, w_k\le n(m-1)})$. Thus $|\lambda|=\#\{j\; \big|\;j>n, w(j)\le n(m-1)\}$.
 	
 	Now%
 	\begin{align*}
 		& \#\{j\; \big|\;j>n, w(j)\le n(m-1)\} \\
 		=& \#\{j\; \big|\;j>n, w(j)\le n+i\} + \#\{j\; \big|\;j>n, n+i< w(j)\le n(m-1)\} \\
 		=& \#\{j\; \big|\;j\le n, w(j)> n+i\} + \#\{j\; \big|\;j>n, n+i< w(j)\le n(m-1)\} \\
 		=&\#\{j\; \big|\;j\le n, n+i< w(j)\le n(m-1)\}+\#\{j\; \big|\;j>n, n+i< w(j)\le n(m-1)\} \\
 		=&  \#\{j\; \big|\; n+i< w(j)\le n(m-1)\} \\
 		=&n(m-2)-i,
 	\end{align*}
 	where the second equality is by Lemma~\ref{lem:eq}, and the third equality follows from the fact that $\Omega^m$ is full, i.e., all marked tiles in columns 1 through $n$ are strictly above this window.
 \end{proof}

\begin{proof}[Proof of \Cref{thm:main}]
	
	For the forward map, \Cref{lem:stable-partition}  shows that the first stable window exists, so $\RS$ is defined. \Cref{lem:jing} and \Cref{cor:first-stable} shows that conditions (1)-(3) in \Cref{thm:main} are satisfied and (4) is automatic. For the reverse map, \Cref{prop:reverse-from-label} ensures that the construction gives an affine permutation, and \Cref{lem:omega-saturated} ensures that the reverse map and forward map are compatible. Finally, \Cref{lem:affine_perm_signature} establishes the relation between the index of the permutation and the size of $\lambda$. 
	\end{proof}

\section{Kazhdan-Lusztig cells}

\label{sec:KL}

In this section, we show that the dual affine Robinson--Schensted correspondence can be used to parametrize the Kazhdan--Lusztig cells in affine type $A$, by comparing it with Shi's affine Robinson--Schensted correspondence. We begin with a brief review of the fundamentals of Kazhdan--Lusztig theory.

Let $(W,S)$ be a Coxeter system, where $W$ is a Coxeter group and $S$ is the set of simple reflections. Denote by $\ell : W \to \mathbb{Z}_{\ge 0}$ the length function. The Hecke algebra $\mathcal{H}$ is the $A = \mathbb{Z}[q^{1/2}, q^{-1/2}]$-algebra with basis $\{T_w : w \in W\}$ and multiplication given by
\[
T_w T_{w'} = T_{ww'} \quad \text{if } \ell(ww') = \ell(w) + \ell(w'),
\]
\[
(T_s + 1)(T_s - q) = 0 \quad \text{for } s \in S.
\]

Each $T_w$ is invertible, with
\[
T_s^{-1} = q^{-1}(T_s + (1-q)) \quad \text{for } s \in S.
\]

There is a natural involution $a \mapsto \overline{a}$ on $A$ defined by $\overline{q^{1/2}} = q^{-1/2}$, which extends to an involution on $\mathcal{H}$ via
\[
\overline{\sum a_w T_w} = \sum \overline{a_w} \, T_{w^{-1}}^{-1}.
\]

For $w \in W$, define $\varepsilon_w = (-1)^{\ell(w)}$ and $q_w = q^{\ell(w)}$. Equip $W$ with the Bruhat order: $w \le w'$ if and only if some reduced expression of $w$ is a subword of a reduced expression of $w'$.

\begin{theorem}[Kazhdan--Lusztig, {\cite[Theorem 1.1]{KL79}}]
There exists an $A$-basis $\{C_w : w \in W\}$ of $\mathcal{H}$ such that $\overline{C_w} = C_w$ and
\[
C_w = \sum_{w' \le w} \varepsilon_{w'} \varepsilon_w \, q_w^{1/2} q_{w'}^{-1} \, P_{w',w} \, T_{w'},
\]
where $P_{w,w} = 1$ and, for $y < w$, $P_{y,w} \in A$ is a polynomial in $q$ of degree at most $\frac{1}{2}(\ell(w) - \ell(y) - 1)$.
\end{theorem}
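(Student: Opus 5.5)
The statement is the Kazhdan--Lusztig theorem: existence and uniqueness of a bar-invariant basis $C_w$ with the stated triangularity and degree bounds. It is a classical result cited from \cite{KL79} and is used here only as background; still, here is how I would prove it. I would construct the $C_w$ by induction on $\ell(w)$ and prove uniqueness separately, using only the Hecke algebra relations recalled above.

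\textbf{Step 1: bar involution on the standard basis.} First I would check that $h\mapsto\overline h$ is a ring involution of $\mathcal H$. Since $\overline{T_s}=T_s^{-1}$, this amounts to verifying that $T_s\mapsto T_s^{-1}$ respects the quadratic and braid relations. Next, by induction on length, I would show that
\[
\overline{T_w}=\varepsilon_w q_w^{-1}\sum_{y\le w}\varepsilon_y \overline{R_{y,w}}\,T_y ,
\]
where the $R_{y,w}$ are polynomials in $q$ with $R_{w,w}=1$. This follows from expanding $T_s^{-1}=q^{-1}(T_s+(1-q))$ and the subword characterization of Bruhat order.

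\textbf{Step 2: uniqueness.} Suppose two elements both satisfy the required conditions for the same $w$. Their difference is bar-invariant and has the form $\sum_{y<w} c_y T_y$, with coefficients living in $q^{1/2}\mathbb Z[q^{1/2}]$ after the natural normalization by the prefactors $\varepsilon$ and $q^{1/2}$. Choose a Bruhat-maximal $y$ with $c_y\ne0$. Comparing the $T_y$ coefficient of the difference and of its bar image, using Step 1, gives $c_y=\overline{c_y}$. This is impossible for a nonzero element of $q^{1/2}\mathbb Z[q^{1/2}]$, so the difference vanishes.

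\textbf{Step 3: existence.} This is the main step. I would pick $s$ with $sw<w$, set $v=sw$, and take the candidate
\[
C_s C_v-\sum_{z<v,\ sz<z}\mu(z,v)\,C_z ,
\]
where $C_s=q^{-1/2}(T_s-q)$ is bar-invariant by direct check and $\mu(z,v)$ is the coefficient of $q^{(\ell(v)-\ell(z)-1)/2}$ in $P_{z,v}$. The candidate is bar-invariant because each term is. Expanding it gives the recursion
\[
P_{y,w}=q^{1-c}P_{sy,v}+q^{c}P_{y,v}-\sum_z\mu(z,v)\,q^{(\ell(w)-\ell(z))/2}P_{y,z},
\]
with $c=1$ if $sy<y$ and $c=0$ otherwise. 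From this I would verify $P_{w,w}=1$, that $P_{y,w}$ vanishes unless $y\le w$, and the degree bound $\deg P_{y,w}\le\frac12(\ell(w)-\ell(y)-1)$.

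The obstacle is the degree bound. Exactly the top-degree contributions of $q^{c}P_{y,v}$ that could reach degree $\frac12(\ell(w)-\ell(y))$ must be cancelled by the $\mu$-terms. The parity case analysis on $sy\lessgtr y$ needs the lifting property of Bruhat order: if $sw<w$ and $sy>y$, then $y\le w\iff y\le sw$ and $sy\le w$. I would handle this by splitting into the cases $sy<y$ and $sy>y$, and in the second case using that $P_{y,w}=P_{sy,w}$ whenever $sw<w$. That identity is itself proven inside the same induction.
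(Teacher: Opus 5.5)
The paper does not prove this statement; it quotes it from \cite{KL79} as background, so there is no internal proof to compare with. Your outline is the original Kazhdan--Lusztig argument: uniqueness from unitriangularity of the bar involution in the normalized basis $q_y^{-1/2}T_y$, and existence by induction via $C_sC_{sw}-\sum_{z<sw,\,sz<z}\mu(z,sw)C_z$. Your recursion for $P_{y,w}$ is exactly theirs. That $\{C_w\}$ is an $A$-basis then follows from unitriangularity and finiteness of Bruhat intervals, which you should say.

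You should, however, have noticed that the printed formula has $P_{w',w}$ where \cite{KL79} has $\overline{P_{w',w}}$, i.e.\ $C_w=\varepsilon_wq_w^{1/2}\sum_{y\le w}\varepsilon_yq_y^{-1}\overline{P_{y,w}}\,T_y$. Your recursion holds only for this barred normalization. If you expand $C_sC_v$ with the formula literally as printed, you get the bar of your recursion, $P_{y,w}=q^{c-1}P_{sy,v}+q^{-c}P_{y,v}-\cdots$, and the $P$'s come out as polynomials in $q^{-1}$. Your own Step 2 in fact shows the printed version is false in general. In both versions the coefficient of $q_y^{-1/2}T_y$ lies in $q^{1/2}\mathbb{Z}[q^{1/2}]$ for $y<w$, so uniqueness would force $P_{y,w}(q)=P^{\mathrm{KL}}_{y,w}(q^{-1})$. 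That is impossible once some Kazhdan--Lusztig polynomial is nonconstant, e.g.\ $P_{1324,3412}=1+q$ in $S_4$. So you should state that you are proving the corrected theorem.

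Two smaller points. First, with $R_{y,w}\in\mathbb{Z}[q]$ the expansion is $\overline{T_w}=\varepsilon_wq_w^{-1}\sum_{y\le w}\varepsilon_yR_{y,w}T_y$ with no bar (e.g.\ $R_{e,s}=q-1$). This is harmless, since Step 2 only uses unitriangularity.

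Second, and more substantively, you misplace the difficulty in the degree bound.
\begin{itemize}
\item When $sy>y$, nothing needs to cancel. The terms $qP_{sy,v}$, $P_{y,v}$ and the $\mu$-terms are all within $\frac12(\ell(w)-\ell(y)-1)$ by induction; the $\mu$-terms all have $z>y$, since $sz<z$ but $sy>y$.
\item When $sy<y$, the only term that can exceed the bound is the top coefficient of $qP_{y,v}$, in degree $\frac12(\ell(w)-\ell(y))$. It equals $\mu(y,v)$ and is cancelled by the single term $z=y$ of the sum. The terms with $y<z<v$ have degree at most $\frac12(\ell(w)-\ell(y)-1)$.
\end{itemize}
In Step 3, Bruhat-order facts are needed only for the support claim ($sy\le v\Rightarrow y\le w$). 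The identity $P_{y,w}=P_{sy,w}$ for $sw<w$ is a consequence of the recursion, not an ingredient. Routing the bound through it can be made to work, but it is an unnecessary detour.
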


The basis $\{C_w : w \in W\}$ is called the \emph{Kazhdan--Lusztig basis} of $\mathcal{H}$. Using this basis, we define preorders $\le_L$, $\le_R$, and $\le_{LR}$ on $W$ generated as follows:
\begin{enumerate}
    \item $w \le_L w'$ if $C_w$ appears with nonzero coefficient in $C_s C_{w'}$ for some $s \in S$;
    \item $w \le_R w'$ if $C_w$ appears with nonzero coefficient in $C_{w'} C_s$ for some $s \in S$;
    \item $\le_{LR}$ is the preorder generated by $\le_L$ and $\le_R$.
\end{enumerate}

The preorder $\le_L$ induces an equivalence relation $\sim_L$ on $W$, where $w \sim_L w'$ if $w \le_L w'$ and $w' \le_L w$. The equivalence classes are called \emph{left cells}. Similarly, $\sim_R$ and $\sim_{LR}$ define \emph{right cells} and \emph{two-sided cells}, respectively. Moreover, $\le_L$ (resp. $\le_R$, $\le_{LR}$) induces a partial order on the set of left (resp. right, two-sided) cells. These cells play an important role in the representation theory of $\mathcal{H}$.

In type $A$, the cells are elegantly described by the classical Robinson--Schensted correspondence.

\begin{theorem}[{\cite{KL79}}]
Let $W = S_n$, and let $\RS(w) = (P(w), Q(w))$ be the classical Robinson--Schensted correspondence. Then, for $u,w \in S_n$,
\[
u \sim_L w \iff Q(u) = Q(w), \quad
u \sim_R w \iff P(u) = P(w), \quad
u \sim_{LR} w \iff \sh(P(u)) = \sh(P(w)),
\]
where $\sh(P)$ denotes the shape of the tableau $P$.
\end{theorem}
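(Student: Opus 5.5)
The plan is the standard route through Knuth equivalence. By Knuth's theorem, $P(u)=P(w)$ exactly when $u$ and $w$ are connected by elementary Knuth relations. With the conventions of this section, where $\le_R$ is generated by right multiplication $C_{w'}C_s$, these relations act on positions, that is $w\mapsto ws_i$ under suitable descent conditions. Dually, $Q(u)=Q(w)$ exactly when $u$ and $w$ are connected by dual Knuth relations $w\mapsto s_iw$. Using $Q(w)=P(w^{-1})$ and the anti-involution $C_w\mapsto C_{w^{-1}}$ of $\mathcal{H}$, which interchanges $\le_L$ and $\le_R$, it suffices to prove the right-cell statement. The left-cell statement then follows by symmetry.

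\emph{Step 1: Knuth equivalent implies same right cell.} I will use the Kazhdan--Lusztig star operations. For adjacent $s,t\in S$ with $st$ of order $3$, let $D_R(s,t)$ be the set of $w$ whose right descent set contains exactly one of $s,t$. On $D_R(s,t)$ there is an involution $w\mapsto w^*$ with $w^*\in\{ws,wt\}$. KL prove $w\sim_R w^*$ by a direct computation of $C_wC_s$, using that $\mu(w,ws)=1$ for the relevant pairs. An elementary Knuth relation $\ldots yxz\ldots\leftrightarrow\ldots yzx\ldots$ (with $x<y<z$, and similarly for the other relation) is exactly a star operation for the pair $(s_i,s_{i+1})$ at the corresponding positions. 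Checking this dictionary is routine. Hence $P(u)=P(w)$ implies $u\sim_R w$.

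\emph{Step 2: same right cell implies Knuth equivalent.} This is the main obstacle. My first attempt is a counting and representation argument. By Step 1 and its left analogue, each right cell is a union of Knuth classes, and each left cell is a union of dual Knuth classes. The left cell module attached to a dual Knuth class with tableau $Q$ of shape $\lambda$ carries the $W$-graph representation. Following KL, I would show this representation has dimension $f^\lambda$ and is isomorphic to the Specht module $S^\lambda$, for instance by identifying its character or by matching it with the Young/Specht construction through $\tau$-invariants. If two dual Knuth classes with different $Q$ lay in one left cell, the cell representations would add up to more than $\sum_\lambda (f^\lambda)^2=n!$ worth of irreducible constituents. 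That contradicts the decomposition of the regular representation of $\mathcal{H}\otimes\mathbb{Q}(q^{1/2})\cong\mathbb{Q}(q^{1/2})[S_n]$ into cell modules.

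If this fails, the fallback is Vogan's generalized $\tau$-invariant. It is constant on right cells, since $w\le_R w'$ forces containment of descent sets and these containments are compatible with star operations. For $S_n$ it determines $P(w)$, by an induction on $n$ that reads off the position of the largest entry through iterated descent data.

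\emph{Step 3: two-sided cells.} The relation $\sim_{LR}$ is generated by $\sim_L$ and $\sim_R$. Since the RS correspondence is a bijection onto pairs of tableaux of the same shape, any $u,w$ with $\sh P(u)=\sh P(w)$ are linked through the element with $P$-tableau $P(u)$ and $Q$-tableau $Q(w)$, so they lie in one two-sided cell. Conversely, the cell representations in a two-sided cell share an irreducible constituent. By Step 2 the left cells of shape $\lambda$ afford $S^\lambda$, and these are pairwise distinct for distinct $\lambda$. Together with the fact that $\le_{LR}$ is compatible with dominance order on shapes, this shows that different shapes give different two-sided cells.
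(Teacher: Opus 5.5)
The paper does not prove this theorem; it is quoted from \cite{KL79}, so your proposal has to stand on its own. Several parts are correct: the reduction via $w\mapsto w^{-1}$, Step~1 (the right star operation for $(s_i,s_{i+1})$ is exactly an elementary Knuth move, and $w\sim_R w^*$ because $\mu(w,w^*)=1$), and the implication ``same shape $\Rightarrow$ same two-sided cell''. The gap is Step~2, whose main argument does not work as written. A dual Knuth class is not known to be a left cell, so it has no cell module of its own. Requiring the left cell module that contains it to have dimension $f^\lambda$ is equivalent to the statement you are proving. The claimed contradiction is also wrong: the left cell modules always have total dimension exactly $n!$, however the Knuth classes are grouped into cells. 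What can be counted is irreducible constituents. The regular representation has $\sum_\lambda f^\lambda$ of them, which equals the number of dual Knuth classes. This count forces each left cell to be a single class only once you know that every left cell module is irreducible. That irreducibility is the real content of the Kazhdan--Lusztig theorem for $S_n$, and ``identify its character or match it with Specht modules via $\tau$-invariants'' does not supply it.

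Your fallback is the standard route, but both of its ingredients are asserted rather than proved. Constancy of the generalized $\tau$-invariant on right cells does not follow from descent containment. It needs that left star operations carry right cells to right cells: if $y\sim_R w$ then ${}^*y\sim_R{}^*w$. Kazhdan--Lusztig deduce this from their comparison of $\mu(y,w)$ with $\mu({}^*y,{}^*w)$ for $y,w\in\mathcal{D}_L(s,t)$, which is a genuine computation with Kazhdan--Lusztig polynomials. That the generalized $\tau$-invariant determines $P(w)$ in type $A$ is Vogan's theorem. Your induction omits its key step: showing that two tableaux that agree on $\{1,\dots,n-1\}$ but have $n$ in different corners are separated by some sequence of star operations.

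The converse half of Step~3 has two problems. First, it uses that left cells of shape $\lambda$ afford $S^\lambda$, which came from the failed branch of Step~2; the $\tau$-invariant route gives no representation-theoretic identification. Second, it appeals to two further facts: that left cells in one two-sided cell share an irreducible constituent, and that $\le_{LR}$ is compatible with dominance order. Neither is elementary. The first is a consequence of Lusztig's $a$-function theory, and the second already implies the claim outright. So that direction is assumed rather than proved. A smaller point: $\sim_{LR}$ is not by definition generated by $\sim_L$ and $\sim_R$; only $\le_{LR}$ is generated by $\le_L$ and $\le_R$. You only use the inclusion $\sim_L\cup\sim_R\subseteq\sim_{LR}$, so the forward direction is unaffected.
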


Motivated by this result, Shi introduced the \emph{affine Robinson--Schensted algorithm} $w \mapsto (\bar P', \bar Q')$, which associates to an affine permutation a pair of tabloids of the same shape. This construction was later extended to a bijection $w \mapsto (\bar P', \bar Q', \rho)$ by Honeywill \cite{honeywill} and given a more visual interpretation via the \emph{affine matrix ball construction} {\cite{ambc}}. 

Further, Chmutov-Pylyavskyy-Yudovina \cite{ambc} showed that the tabloid $\bar P'$ can be interpreted asymptotically as a limit of the classical Robinson-Schensted map.

\begin{lemma}[\cite{ambc}]\label{lem:ambc-limit}
	Let $w\in\tilde S_n$, and $w^{(i)}$ be the permutation $(w(0),w(1),\cdots,w(i))$, and let $\vec{P}_i=P(w^{(i)})$ where $P$ is the classical Robinson-Schensted map. Then for large enough $i$, we have that $\row_{\vec{P}_i}(k)=\row_{\bar{P}'(w)}(\bar k)$, where $\bar k = k\mod n$.
\end{lemma}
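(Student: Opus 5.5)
Since this lemma is quoted from \cite{ambc}, the plan is to reconstruct its proof through the affine matrix ball construction (AMBC), not through Shi's insertion directly. First I would record a stabilization fact. For fixed $k\ge 0$, only finitely many $t\ge 0$ satisfy $w(t)\le k$, because $w(t)-t$ is bounded. Row insertion of $w(0),\dots,w(i)$ places $k$ (as a value) in a row that is changed only by later insertions of values smaller than $k$. Hence $\row_{P(w^{(i)})}(k)$ is eventually constant in $i$. Write $r(k)$ for this limit, which is the row of $k$ in the semi-infinite insertion tableau, in the spirit of \Cref{lem:affine_insertion}. It then remains to show that $r(k)=\row_{\bar P'(w)}(\bar k)$ for all sufficiently large $k$. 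A single $k$ does not suffice, since the lemma is an asymptotic statement in $k$ as well: ``large enough $i$'' is meant uniformly over the $k$ that appear.

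Second, I would express $r(k)$ through Viennot/Fulton matrix ball data. In the finite MBC of the rock diagram of $w^{(i)}$, the row of value $k$ in $P$ is the index of the iteration (the ``level'' of shadow lines) in which the row coordinate $k$ appears as an endpoint of a zig-zag. Equivalently, by Greene's theorem, $r(k)$ is the unique $j$ with
\[
\mathrm{sh}\,P(w^{(i)}|_{\le k})-\mathrm{sh}\,P(w^{(i)}|_{\le k-1})=e_j,
\]
and these shapes are computed by maximal unions of decreasing/increasing subsequences in the truncated word. On the affine side, AMBC defines $\bar P'(w)$ by the same zig-zag procedure on the periodic ball diagram. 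The numbering of balls uses ``proper numberings'', which are periodic up to a shift by the stream densities. The row of $\bar k$ in $\bar P'$ is the iteration in which the zig-zag through residue class $\bar k$ is created.

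Third, and this is the main step, I would compare the AMBC zig-zags of $w$ with the MBC zig-zags of the finite truncation $w^{(i)}$ on a region far from the boundary. The plan is to show that the proper numbering of balls in the truncated diagram agrees with the affine proper numbering up to an additive constant, on all balls whose row and column lie in a middle range $[C, i-C]$. Here $C$ depends only on $w$, for instance through $\max_t |w(t)-t|$ and $n$. The numbering is determined by longest chains ending at each ball, and in the periodic diagram such chains have bounded ``defect'' compared with chains confined to a large box. Once the numberings agree in the middle range, the zig-zag of each level also agrees there, so the iteration index at row $k$ coincides for $k$ in the middle range. Periodicity of the AMBC output then gives the statement modulo $n$. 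Combining this with the first step, which pushes $i\to\infty$ with $k$ fixed, identifies $r(k)$ with $\row_{\bar P'(w)}(\bar k)$ for all large $k$.

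I expect the comparison in the third step to be the obstacle. Boundary effects near the northwest corner of the truncation can change which balls are numbered first, and in the affine setting streams of equal density can be reordered. To control this I would first try the AMBC fact that, after finitely many (at most $n$) iterations, the zig-zags of each level become periodic with translation by the stream density. Truncation then affects only a bounded initial segment, and it suffices to take $k$ beyond that segment.
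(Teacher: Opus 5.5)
\textbf{Verdict: genuine gap.} Your first step is correct. An entry $k$ can only be bumped by a later inserted value smaller than $k$, and only finitely many $t\ge 0$ have $w(t)\le k$, so $\row_{\vec P_i}(k)$ stabilizes as $i\to\infty$. Everything after that is an outline, and its central claim is not established. That claim is the entire content of the lemma; the paper gives no proof of it and imports it from \cite{ambc}.

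Your justification for ``the truncated numbering agrees with the affine proper numbering up to an additive constant in the middle range'' is that chains in the periodic diagram have bounded defect compared with chains confined to a box. That only yields $|d_{\mathrm{fin}}-d_{\mathrm{aff}}|\le C'$, not $d_{\mathrm{fin}}-d_{\mathrm{aff}}\equiv\mathrm{const}$. Zig-zags are level sets of the numbering, so only exact agreement up to one global shift lets you compare them.

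A single global shift also cannot be expected for free. On the periodic diagram, the conditions $d(b)=1+\max_{b'\prec b}d(b')$ and $d(b+(n,n))=d(b)+m$ (with $m$ the maximal stream density) do not determine $d$ up to translation once there are several channels. For instance, take $w=[9,10,-5,-4]\in\tilde S_4$.
\begin{itemize}
\item The balls at positions $\equiv 1,2$ and at positions $\equiv 3,0 \pmod 4$ form two channels of density $2$.
\item A chain can pass from the second channel to the first at no cost, but passing back forces it to skip many balls.
\item The truncated word $w(0),w(1),\dots$ induces the numbering in which the balls at positions $4m+1$ and $4m+3$ both receive $2m+2$.
\item Lowering all numbers on the second channel by any $\theta\in\{0,\dots,6\}$ gives another numbering satisfying both conditions, with different zig-zags.
\end{itemize}
So you must either prove that the left boundary always selects exactly the numbering AMBC uses, or prove that the recorded residues do not depend on the choice. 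The proposal does neither.

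Even granting the first level, the sentence ``the zig-zags of each level also agree'' skips the induction over the (at most $n$) levels. The balls of level $r+1$ are the back corners of the level-$r$ zig-zags. Zig-zags cut off near column $0$ (and near column $i$) produce back corners with no affine counterpart. The level-$(r+1)$ numbering in the bulk is again obtained by propagating from that boundary, so the same selection problem reappears at every level, now for the back-corner diagrams. The eventual periodicity of AMBC zig-zags that you invoke concerns the affine side alone and gives no comparison with the truncation.

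What is missing, at each level, is:
\begin{enumerate}
\item[(i)] that the finite numbering restricted to the bulk is an exact translate of the numbering used by AMBC at that level, or an invariance result that makes this unnecessary; and
\item[(ii)] that the back-corner sets coincide on a slightly smaller bulk, so the induction can continue.
\end{enumerate}
Without these, the argument reduces to citing the asymptotic realization theorem of \cite{ambc}, which is what the paper does.
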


Let $\RS'(w)= (\bar P',\bar Q',\rho)$ be the affine RS correspondence, and $\RS(w)=(\bar P,\bar Q,\lambda,k)$ be the dual affine RS correspondence.
\begin{lemma}
\label{lem:P-agree}
	We have that $\bar P'(w) = \bar P(w)$ for all $w\in \tilde S_n$. Hence for any $w,u\in \tilde{S}_n^{(0)}$, $w\sim_{R} u\iff \bar P(u)=\bar P(w)$. 
\end{lemma}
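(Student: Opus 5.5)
The plan is to compare both tabloids with the asymptotic row data of an insertion tableau. On the dual side this is \Cref{lem:color=row}; on Shi's side it is \Cref{lem:ambc-limit}. The second statement then follows from Shi's theorem that $w\sim_R u$ in $\tilde S_n^{(0)}$ iff $\bar P'(w)=\bar P'(u)$.

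\emph{Step 1: the dual side.} Fix $j=n$, the column of the east boundary of the standard windows. Let $e_1,e_2,\dots$ be the edges with nonzero label in $[(-\infty,n),(\infty,n)]$. By \Cref{lem:affine_insertion}, these edges record the insertion of the semi-infinite word $(w_k)_{k>n}$, giving the tableau $\vec P_n$. By \Cref{lem:color=row}, the entry $i$ of $\vec P_n$ sits in row $\C(e_i)$. Now take the first stable window $\Omega^{N_0}$ and its east boundary $\ori(1),\dots,\ori(n)$; these edges are a block of $n$ consecutive $e_i$'s. By \Cref{lem:stability}, every $\Omega^m$ with $m\ge N_0$ is stable and carries the same color sequence on its east boundary. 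Hence for all large $m$, the position of an edge in $\ori$ modulo $n$ determines its color, and this assignment is exactly $\bar P$. Equivalently, the rows of $\vec P_n$ restricted to the entries lying in full windows are periodic, and that periodic pattern reads off $\bar P$.

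\emph{Step 2: the ambc side.} We need to convert \Cref{lem:ambc-limit}, which concerns row insertion of the prefixes $w(0),\dots,w(i)$, into a statement about the tableau built from the east boundary. Here the indexing matters: \Cref{def:col-label} counts the labelled edges along column $j$ from top to bottom. So the index $k$ of an edge in column $n$ refers to the value row $k$, that is, to the entry $w^{-1}(k)$ in the word. I expect a transposition of the form $w\leftrightarrow w^{-1}$ or $P\leftrightarrow Q$ to appear. One route is the classical symmetry $P(w)=Q(w^{-1})$. A cleaner route reads the classical statement directly off the growth diagram: take a large finite subdiagram $\Gamma^{(i,j)}$ and apply \Cref{prop:dual_growth}, so that $P'=P$ for that finite permutation. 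The rows of $P$ then coincide with the rows of the classical insertion tableau of the finite word in value order.

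Finally, let $i\to\infty$ and $j\to-\infty$ simultaneously, and use the periodicity $w(t+n)=w(t)+n$. This identifies the limiting row of $k$ with the row of $\bar k$ in $\bar P'$. Matching it with Step~1 gives $\row_{\bar P}(\bar k)=\row_{\bar P'}(\bar k)$, hence $\bar P=\bar P'$.

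\emph{Main obstacle.} The hard step is justifying the exchange of limits. \Cref{lem:ambc-limit} lets prefixes grow to the right. Our $\vec P_j$ instead comes from suffixes read after position $j$ and truncated at values at most $i$, inserted in the dual direction. I would try to handle this by showing that both are stabilizations of the same two-parameter family of finite growth diagrams $\Gamma^{(i,j)}$ under row reindexing. By Lemma~\ref{lem:unique-repel}, the shadow-line structure deep inside these diagrams is eventually independent of the truncation, so the row of each entry stabilizes and the order of the two limits does not matter. Failing that, I would cite the equivalent suffix-form characterization of $\bar P'$ from \cite{ambc}, namely its description as a limit over windows far to the southeast. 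With that characterization in hand, Step~1 compares to it directly.
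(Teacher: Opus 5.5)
Your strategy is the paper's. Its proof is a single sentence: the asymptotic description of $\bar P'$ in \Cref{lem:ambc-limit} agrees with the description of $\bar P$ obtained from \Cref{lem:affine_insertion,lem:color=row}, and Shi and Chmutov--Pylyavskyy--Yudovina then give the cell statement. Your Step~1 correctly expands the first half of that sentence, including the use of \Cref{lem:stability} to make the color pattern on $\ori$ periodic.

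Step~2, however, misdiagnoses the difficulty, and your remedies do not reach the one point that needs an argument.
\begin{itemize}
\item \emph{No transposition.} The edge in row $t$ of column $j$ corresponds to the value $t$. By \Cref{lem:affine_insertion}, its color records the row of $t$ in the ordinary row-insertion tableau of $w(j+1)w(j+2)\cdots$; \Cref{prop:dual_growth} shows the dual rules only affect the $Q$ side. This is exactly the kind of datum in \Cref{lem:ambc-limit}, so no $w\leftrightarrow w^{-1}$ or $P\leftrightarrow Q$ swap enters. Invoking $P(w)=Q(w^{-1})$ would lead you astray.
\item \emph{No order-of-limits problem.} Since $P(u)|_{\le v}=P(u|_{\le v})$ and only finitely many $t\ge 0$ have $w(t)\le v$, the limit $\lim_i P(w(0),\dots,w(i))$ is simply $\vec P_{-1}$, the insertion tableau of $w(0)w(1)\cdots$. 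Truncating by position or by value gives the same entries $\le v$ once $i$ is large. There is also no need to send $j\to-\infty$.
\end{itemize}

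The genuine bookkeeping issue is the starting offset. By periodicity, $\vec P_{-1}$ shifted by $n$ is $\vec P_{n-1}$, whereas $\bar P$ is read from $\vec P_n$, and these two words differ by the letter $w(n)$. \Cref{lem:unique-repel} says nothing about this. Citing a ``suffix-form'' characterization does not help either, because \Cref{lem:ambc-limit} already is one and the offset remains.

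What settles it is stability. For $a\ge N_0$ the window $\Omega^{(a,1)}$ is stable, so every bump tile there supports two lines of one color, and every cross tile carries its horizontal line straight through. Hence the vertical edges in each row of $\Omega^{(a,1)}$ all have the same color. By \Cref{lem:color=row}, every sufficiently large value $t$ then lies in the same row of $\vec P_{n-1}$ as of $\vec P_n$. Combining this with Step~1 and \Cref{lem:ambc-limit} gives $\row_{\bar P}(\bar t)=\row_{\bar P'}(\bar t)$, hence $\bar P=\bar P'$. The paper leaves this step implicit, but your proposal as written does not close it.
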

\begin{proof}
	We  note that the asymptotic description of $\bar{P}'$ in \Cref{lem:ambc-limit} agrees with the asymptotic description of $\bar{P}$ via the dual affine growth diagram by \Cref{lem:affine_insertion,lem:color=row}. It is known from \cite[Section 7]{shi} and \cite[Theorem 9.4]{ambc} that $w\sim_R u \iff \bar{P}'(u)=\bar{P}'(w)$, and thus also equivalent to $\bar{P}(u)=\bar{P}(w)$. 
\end{proof}

In \cite{chmutov2022affine}, the authors introduced an involution on tabloids called \emph{affine evacuation}.
\begin{definition}[{\cite[Theorem 3.1]{chmutov2022affine}}]\label{def:evac}
	Let $r:\tilde S_n\to\tilde S_n$ be the reflection automorphism of the $\tilde A_{n-1}$ Dynkin diagram that sends $s_i\to s_{n-i}$. Then there is an involution ${\evac}$ on tabloids such that if $\RS'(w)=(\bar P',\bar Q',\rho)$ then $\RS'(r(w))=({\evac}(\bar P'),{\evac}(\bar Q'),\tau)$ for some other $\tau$. In particular, for any $w\in\tilde S_n$, we have $\bar P'(r(w))=\evac(\bar P'(w))$.
\end{definition}

\begin{remark}[{\cite[Proposition 3.25]{chmutov2022affine}}]
	One can also obtain affine evacuation asymptotically. For a tabloid $\bar T$, let $w$ be any affine permutation such that $\bar P(w)=\bar T$, then one gets an infinite tableau $\vec T$ where $\vec T_i=P(w(0),\cdots,w(i))$. Then applying finite evacuation (which we denote also by $\evac$, by abuse of notation) on each of the $\vec T_i$, we get a new infinite tableau $\vec S$ such that $\vec S_i:=\evac(\vec T_i)$. The evacuation of $\bar T$ can be defined to be the tabloid $\bar S$ such that $\row_{\vec S}(k)=\row_{\bar S}(\bar k)$ for large enough $k$.
\end{remark}

\begin{prop}\label{prop:symmetry}
	For any $w\in \tilde{S}_n$, $\bar P(w)=\bar Q(r(w^{-1}))$.
\end{prop}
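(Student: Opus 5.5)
The plan is to realize the identity as a symmetry of the growth diagram itself: reflect the whole $\mathbb{Z}\times\mathbb{Z}$ grid across an antidiagonal. I will show this reflection carries $\Gamma_w$, together with its labels, shadow lines and colors, onto $\Gamma_{r(w^{-1})}$. Under it the east boundary of a standard window becomes the north boundary of a standard window, read in the opposite order. Since $\bar P$ is read off the east boundary and $\bar Q$ off the north boundary, this gives $\bar P(w)=\bar Q(r(w^{-1}))$.

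\textbf{Step 1: the reflected rock diagram.} Consider the antidiagonal reflection $\phi$ of the grid. In tile coordinates it is $(a,b)\mapsto(c-b,\,c-a)$ for a suitable constant $c$, for instance $c=n+1$. The marks of $w$ sit at $(w(j),j)$, so they go to $(c-j,\,c-w(j))$. These are exactly the marks of the affine permutation $u$ with $u(c-w(j))=c-j$, that is,
\[
u(t)=c-w^{-1}(c-t).
\]
I will fix the convention for $r$ (with the shift built into $c$) so that $u=r(w^{-1})$. This amounts to checking that $t\mapsto c-w(c-t)$ conjugates $s_i$ to $s_{n-i}$ in window notation. The map $\phi$ also commutes with the diagonal translation $(a,b)\mapsto(a+n,b+n)$, so periodicity is respected.

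\textbf{Step 2: $\phi$ preserves all the structure.} First, $\phi$ fixes the northeast direction: the zero region $\{i'\le i,\ j'\ge j\}$ maps to a region of the same form. So the boundary condition in the definition of $\Gamma$ is preserved. Second, $\phi$ exchanges north edges with east edges and west edges with south edges. Each dual edge local rule in Figure~\ref{fig:dual_edge_rule} is invariant under this exchange: the marked tile, the empty tile, the bump tile (north and east labelled $a$, west and south labelled $a+1$) and the cross tile all map to tiles of the same type. Hence the labels of $\Gamma_{u}$ are the $\phi$-images of the labels of $\Gamma_w$, and so are the shadow lines, bump tiles, cross tiles, and the repelling relation.

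The coloring rule of Definition~\ref{def:coloring-rule} uses only two things: the northeast-to-southwest order of the label-$1$ shadow lines, which $\phi$ preserves, and the repelling chains. So colors are preserved as well.

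\textbf{Step 3: matching windows and boundaries.} The reflection $\phi$ sends $\Omega^{(m,1)}$ to a window $\Omega^{(a,b)}$ whose offset $a-b$ is an increasing function of $m$. By periodicity, $\Omega^{(a,b)}$ is identical to a standard window $\Omega^{m'}$, with $m'$ increasing in $m$. Full windows correspond to full windows.

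Stability is symmetric under the north/east swap, by the remark after the definition of a stable window. Therefore $\phi$ matches stable windows with stable windows, and in particular first stable windows with first stable windows. Finally, $\ori$ is indexed from top to bottom and $\phi$ sends it to the north boundary traversed from right to left, which is exactly the indexing of $\oto$. Thus $\C(\ori(j))$ for $w$ equals $\C(\oto(j))$ for $u$, and the claim follows from Definition~\ref{def:dars}.

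\textbf{Expected obstacle.} The main difficulty is bookkeeping. I have to choose $c$ and the normalization of $r$ so that $u$ is exactly $r(w^{-1})$, not a conjugate of it by a translation $\tau_k$. I also have to check that the reflected standard window has its boundary edges numbered $1,\dots,n$ compatibly, rather than cyclically shifted. I would first try $c=n+1$ and verify the statement on the example of Figure~\ref{fig:mainex}. If a shift persists, I would absorb it by noting that conjugating by the length-zero element rotates the diagram diagonally, and check how such a rotation acts on the tabloids.
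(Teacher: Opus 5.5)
Your proposal is correct and is the same argument as the paper's, which reflects the rock diagram by $(i,j)\mapsto(n+1-j,n+1-i)$, identifies this reflection with $w\mapsto r(w^{-1})$, and observes that it swaps the $\bar P$- and $\bar Q$-tabloids; you supply the details the paper leaves implicit (invariance of the dual local rules and the zero region, preservation of colors, and symmetry of stability). Your bookkeeping worry resolves with $c=n+1$: $t\mapsto n+1-t$ conjugates $s_i$ to $s_{n-i}$, the image of $\Omega^m$ is $\Omega^{(1,2-m)}$, which is identical to $\Omega^m$, and $\ori(i)$ maps exactly onto $\oto(i)$, so no cyclic shift or translation correction is needed.
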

\begin{proof}
The reflection on a rock diagram across the antidiagonal is the map $(i,j)\mapsto (n+1-j,n+1-i)$. Thus the composition of anti-diagonal reflection and diagonal reflection is $(i,j)\mapsto (n+1-j,n+1-i)\mapsto (n+1-i,n+1-j)$. Note that this is exactly the Dynkin reflection $r$; thus
the rock diagram of $r(w^{-1})$ is the same as that of $w$ after reflection across the anti-diagonal, which exchanges the $\bar P$-tabloid and $\bar Q$-tabloid. \end{proof}
	
\begin{corollary}
\label{cor:evac}
	$\bar Q(w)=\evac(\bar Q'(w))$.

\end{corollary}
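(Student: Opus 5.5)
We chain the available identities: \Cref{prop:symmetry}, \Cref{lem:P-agree}, and the defining property of affine evacuation in \Cref{def:evac}, together with a symmetry of Shi's correspondence under inversion.

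First apply \Cref{prop:symmetry} to $u\coloneqq r(w^{-1})$. Since $r$ is an involutive automorphism commuting with inversion, $r(u^{-1})=r(r(w))=w$. Hence $\bar Q(w)=\bar Q(r(u^{-1}))=\bar P(u)=\bar P(r(w^{-1}))$. By \Cref{lem:P-agree}, $\bar P(r(w^{-1}))=\bar P'(r(w^{-1}))$. By \Cref{def:evac}, $\bar P'(r(w^{-1}))=\evac(\bar P'(w^{-1}))$. So far
\[\bar Q(w)=\evac\bigl(\bar P'(w^{-1})\bigr).\]

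It remains to show $\bar P'(w^{-1})=\bar Q'(w)$, i.e.\ that Shi's affine RS (equivalently AMBC) interchanges the two tabloids under inversion. This is the affine analogue of $P(w^{-1})=Q(w)$. I would cite it from \cite{ambc}: the affine matrix ball construction is manifestly symmetric under transposing the rock diagram (proposition "$\Phi(w^{-1})=(Q,P,\rho')$"). Alternatively, I would derive it from cell theory: $w\sim_L u\iff w^{-1}\sim_R u^{-1}$, with left cells parametrized by $\bar Q'$. That route only gives equality up to relabeling, so the direct citation is preferable. This is the main point where an external input is needed, since nothing earlier in the paper addresses $\bar Q'$ directly.

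Substituting gives $\bar Q(w)=\evac(\bar Q'(w))$. A minor check is that \Cref{lem:P-agree} and \Cref{def:evac} are stated on all of $\tilde S_n$, including extended elements, and that $r$ extends to $\tilde S_n$ compatibly with the reflection used in \Cref{prop:symmetry}. Indeed, the proof of \Cref{prop:symmetry} identifies $r$ with the rock-diagram map $(i,j)\mapsto(n+1-i,n+1-j)$, which makes sense on the extended group, so I expect no obstruction there.
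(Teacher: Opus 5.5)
Your proof is correct and follows exactly the paper's chain
$\bar Q(w)=\bar P(r(w^{-1}))=\bar P'(r(w^{-1}))=\evac\bar P'(w^{-1})=\evac(\bar Q'(w))$,
using \Cref{prop:symmetry}, \Cref{lem:P-agree} and \Cref{def:evac}. The paper uses the inversion symmetry $\bar P'(w^{-1})=\bar Q'(w)$ of Shi's correspondence (the transpose symmetry of the affine matrix ball construction) without comment in its last equality, and leaves the substitution $u=r(w^{-1})$ into \Cref{prop:symmetry} implicit; you make both explicit, which clarifies the argument without changing it.
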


	\begin{proof}
		By \Cref{def:evac} and \Cref{prop:symmetry}, we have $\bar Q(w)=\bar P(r(w^{-1}))=\bar P'(r(w^{-1}))=\evac \bar P'(w^{-1})=\evac (\bar Q'(w))$.
	\end{proof}

\begin{corollary}
\label{cor:Q}
	For any $w,u\in \tilde{S}_n^{(0)}$, $w\sim_L u$ if and only if $\bar Q(w)=\bar Q(u)$.
\end{corollary}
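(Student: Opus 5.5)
The plan is to reduce to the known description of left cells via Shi's tabloid $\bar Q'$, and then use that affine evacuation is a bijection on tabloids.

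First, recall from \cite[Section 7]{shi} and \cite[Theorem 9.4]{ambc} the left-handed counterpart of the statement used in \Cref{lem:P-agree}: for $w,u\in\tilde S_n^{(0)}$, $w\sim_L u$ if and only if $\bar Q'(w)=\bar Q'(u)$. If one prefers to derive this rather than cite it, use $w\sim_L u \iff w^{-1}\sim_R u^{-1}$. This follows because the Kazhdan--Lusztig basis is compatible with the anti-involution $T_w\mapsto T_{w^{-1}}$, which sends $C_w$ to $C_{w^{-1}}$ and interchanges left and right multiplication. Then combine it with the standard fact that Shi's correspondence satisfies $\bar Q'(w)=\bar P'(w^{-1})$. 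That fact is the symmetry of the affine matrix ball construction under transposing the ball diagram, and it was already used implicitly in \Cref{cor:evac}.

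Second, apply \Cref{cor:evac}, which gives $\bar Q(w)=\evac(\bar Q'(w))$ and $\bar Q(u)=\evac(\bar Q'(u))$. Since $\evac$ is an involution on tabloids (\Cref{def:evac}), it is injective. Hence $\bar Q(w)=\bar Q(u)$ if and only if $\bar Q'(w)=\bar Q'(u)$, and the first step turns this into $w\sim_L u$.

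An alternative route, which avoids citing the left-cell statement for $\bar Q'$, is to write $\bar Q(w)=\bar P(r(w^{-1}))$ using \Cref{prop:symmetry}. One then checks that $w\sim_L u \iff r(w^{-1})\sim_R r(u^{-1})$. This holds because $r$ is a Coxeter diagram automorphism, so it induces an algebra automorphism of $\mathcal H$ preserving the Kazhdan--Lusztig basis and hence all cell preorders. Composing $r$ with inversion therefore exchanges left and right cells. Moreover, $r$ and inversion both preserve $\tilde S_n^{(0)}$, so \Cref{lem:P-agree} applies to $r(w^{-1})$ and $r(u^{-1})$ and gives the result directly.

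I would present this second route as the main proof, since it uses only results established in the paper together with standard Hecke algebra symmetries. The only point needing care is that $r$ and inversion preserve $\tilde S_n^{(0)}$. For inversion this holds because $\sum_j (w^{-1}(j)-j)=-\sum_j (w(j)-j)$. For $r$ it holds because $r$ acts on the Coxeter group $\tilde S_n^{(0)}$ itself. I expect no serious obstacle beyond verifying this compatibility.
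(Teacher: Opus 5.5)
Your main (second) route is correct and is exactly the paper's argument: the chain $\bar Q(w)=\bar Q(u)\iff \bar P(r(w^{-1}))=\bar P(r(u^{-1}))\iff r(w^{-1})\sim_R r(u^{-1})\iff w^{-1}\sim_R u^{-1}\iff w\sim_L u$. It uses \Cref{prop:symmetry}, \Cref{lem:P-agree}, and the invariance of cells under the diagram automorphism $r$; your explicit check that $r$ and inversion preserve $\tilde S_n^{(0)}$ (needed to invoke \Cref{lem:P-agree}) is a detail the paper leaves implicit.
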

\begin{proof}Since $r$ is a Dynkin diagram automorphism we have $w\sim_R u\iff r(w)\sim_R r(u)$. 
	Then we have $\bar Q(w)=\bar Q(u)\iff \bar P(r(w^{-1}))=\bar P(r(u^{-1}))\iff r(w^{-1})\sim_R r(u^{-1})\iff w^{-1}\sim_R u^{-1}\iff w \sim_L u$. \end{proof}
	The following theorem summarizes the main result in this section. 
	\begin{theorem}
	\label{thm:dual}
	For all $w\in \tilde{S}_n$, we have that $\bar{P}(w)=\bar{P}'(w)$ and $\bar{Q}(w)=\evac(\bar{Q}'(w))$. As a consequence, for all $w,u\in \tilde{S}_n^{(0)}$, $w\sim_R u$ if and only if $\bar{P}(w)=\bar{P}(u)$, and $w\sim_L u$ if and only if $\bar{Q}(w)=\bar{Q}(u)$.
\end{theorem}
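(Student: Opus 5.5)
This theorem assembles results already proved in this section, so the plan is to cite them in order. First, \Cref{lem:P-agree} gives $\bar P(w)=\bar P'(w)$ for all $w\in\tilde S_n$. Its proof compares two asymptotic descriptions of the $P$-tabloid. On our side, \Cref{lem:affine_insertion} shows that the vertical line at column $j$ of $\Gamma_w$ records the insertion tableau $\vec P_j$ of the semi-infinite word $(w_k)_{k>j}$. \Cref{lem:color=row} then shows that the color of the $i$-th nonzero edge on that line is the row of $i$ in $\vec P_j$; in particular, on the east boundary of a stable window these colors give the rows of $\bar P$. On the other side, \Cref{lem:ambc-limit} describes $\bar P'$ as the eventual row pattern, modulo $n$, of classical insertion of longer and longer finite words of $w$. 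The one point I would verify carefully is that the two limits really coincide: our construction inserts the tail $w(j+1),w(j+2),\dots$ truncated by value, while \Cref{lem:ambc-limit} inserts $w(0),\dots,w(i)$. Both eventually reach a periodic regime in which the row of $k$ depends only on $k \bmod n$. I would reconcile them using the periodicity $\Omega^{(a,b)}=\Omega^{(a+1,b+1)}$, together with the fact that, by \Cref{lem:stability}, stable windows persist and all give the same color sequences.

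Second, for the $Q$-tabloid, I would invoke \Cref{cor:evac}, which gives $\bar Q(w)=\evac(\bar Q'(w))$. It follows from three ingredients: the antidiagonal symmetry $\bar Q(w)=\bar P(r(w^{-1}))$ of \Cref{prop:symmetry}; the already established equality $\bar P=\bar P'$; and the defining property of affine evacuation in \Cref{def:evac}, namely $\bar P'(r(u))=\evac(\bar P'(u))$, together with $\bar P'(w^{-1})=\bar Q'(w)$. This last identity is the standard inverse symmetry of Shi's correspondence, under which inversion swaps $\bar P'$ and $\bar Q'$.

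For the cell statements, the right-cell claim is exactly the second half of \Cref{lem:P-agree}, obtained by combining $\bar P=\bar P'$ with Shi's theorem (as reproved in \cite{ambc}) that $w\sim_R u$ if and only if $\bar P'(w)=\bar P'(u)$. The left-cell claim is \Cref{cor:Q}. Its proof rests on three facts: $r$ preserves $\sim_R$, because it is a Coxeter diagram automorphism preserving the Kazhdan--Lusztig basis; inversion exchanges $\sim_L$ and $\sim_R$; and the chain of equivalences $\bar Q(w)=\bar Q(u)\iff r(w^{-1})\sim_R r(u^{-1})\iff w\sim_L u$ holds. The restriction to $\tilde S_n^{(0)}$ is needed because the Kazhdan--Lusztig theory is set up for the Coxeter group $\tilde S_n^{(0)}$. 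I would also check that $r$ and inversion preserve $\tilde S_n^{(0)}$. Inversion does, since $\sum_j (w(j)-j)$ changes sign under $w\mapsto w^{-1}$. For $r$, one can compute directly that the map $(i,j)\mapsto(n+1-i,n+1-j)$ of \Cref{prop:symmetry} sends $\tilde S_n^{(i)}$ to $\tilde S_n^{(-i)}$.

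The main obstacle I anticipate is the identification $\bar P=\bar P'$, specifically matching the row index supplied by colors with the row index in the asymptotic insertion of \cite{ambc}, including the index shift $k\mapsto \bar k$. Everything else is formal bookkeeping with the Dynkin automorphism and inversion. If the direct comparison of the two limits proves delicate, I would first try a fallback: show that both tabloids are invariant under the same Knuth-type moves on the periodic word and agree on a representative of each class, for example on minimal-length elements of each fiber.
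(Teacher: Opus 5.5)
Your proposal is correct and follows the paper's proof exactly: the theorem is just the combination of \Cref{lem:P-agree}, \Cref{cor:evac} (built from \Cref{prop:symmetry}, $\bar P=\bar P'$, \Cref{def:evac}, and the inverse symmetry $\bar P'(w^{-1})=\bar Q'(w)$ of Shi's correspondence), and \Cref{cor:Q}. Your additional checks are sound elaborations of points the paper treats as immediate: that $r$ and inversion preserve $\tilde S_n^{(0)}$, and that the two asymptotic insertion limits can be matched using periodicity and the persistence of stable windows from \Cref{lem:stability}.
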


\begin{proof}
	This is obtained by combining \Cref{lem:P-agree}, \Cref{cor:evac}, and \Cref{cor:Q}.
\end{proof}

\section{Conjectural Geometric Interpretation}

\label{sec:geometry}
Let $\mathcal{K}\coloneqq\cc((t^{-1}))$ denote the  field of formal Laurent series, and 
$\mathcal{O}\coloneqq\cc[[t^{-1}]]$ the ring of formal power series\footnote{Our choice of using $t^{-1}$ instead of $t$ as the generator allows more natural correspondence with the combinatorial construction.}.
For $f\in \K$ where $f(t) =\sum_{i\ge N}a_i t^{-i}$ with $a_i\in \cc$, and $f\neq 0$, we let
$\ord(f)$ be the smallest integer for which $a_i\neq 0$.
Let $\{e_1,\cdots, e_n\}$ denote the standard $\K$-basis of $\K^n$, and
for $c\in \mathbb{Z}$, define $e_{j+nc}\coloneqq t^c e_j$.
A \textbf{lattice} $L \subset \K^n$
is a free $\O$-submodule generated by $v_1,\ldots , v_n$ where
$\{v_1,\dots ,v_n\}$ is a $\K$-basis of $\K^n$. 
Consider the family of \textbf{standard $\O$-lattices}
\[E_j \coloneqq \O\tup{e_j,e_{j-1},\cdots, e_{j-n+1}} = \mathrm{fSpan}_{\cc }\tup{e_k}_{k\le j},\]
where $\mathrm{fSpan}$ denotes the formal span of possibly infinitely many basis vectors.

The (type A) \textbf{affine flag variety} $Fl_n$ is the space of all chains
of lattices $L= (\cdots \subset L_1\subset L_2\subset \cdots \subset L_n\subset \cdots )$
such that $L_{i+n} = tL_i$ 
and $\dim(L_i/L_{i-1}) = 1$
for all $i$. 
The \textbf{standard flag} is $E \coloneqq (\cdots \subset E_1\subset\cdots\subset E_n\subset \cdots )$,
whose stabilizer $\I$ is the subgroup of $GL_n(\mathcal{O})$ which is
upper-triangular modulo $t^{-1}$:
\[\I = \{b=(b_{ij})\in GL_n(\O): \ord(b_{ij})>0\  \forall i>j\}.\]
 Therefore $Fl_n \cong GL_n(\K)/\I$, and $\I$ is the \textbf{ Iwahori
 subgroup} of $GL_n(\K)$.

There is a canonical bijection
\[GL_n(\K)\backslash (Fl_n\times Fl_n)\cong I\backslash Gl_n(\K)/I\cong \tilde{S}_n.\]
In other words, for each $w\in \tilde{S}_n$, define the orbit $O_w=GL_n(\K)\cdot (I,wI)\subset Fl_n\times Fl_n$, then $Fl_n\times Fl_n=\coprod_{w\in \tilde{S}_n}O_w$. The \textbf{relative position} of $F$ and $F'$ for any two flags $F,F'\in Fl_n$ is the element $w=w(F,F')\in \tilde{S}_n$ such that $(F,F')\in O_w$. Explicitly, the flag $wI$ as an infinite chain of lattices is $(E_{w(j)})_{j\in \mathbb{Z}}$.

For any flag $F\in Fl_n$, let%
\[\mathfrak{n}_F\coloneqq \{\eta\in \mathfrak{gl}_n(\K):\eta(F_i)\subseteq F_{i-1}\text{ for all }i\in \mathbb{Z}\}\]%
the (pro-)nilpotent radical of the Iwahori Lie algebra stabilizing $F$. 
Let $(F,F')\in Fl_n\times Fl_n$. Notice that if $\eta\in \mathfrak{n}_F\cap \mathfrak{n}_{F'}$, then $\eta$ induces a well-defined nilpotent operator on the finite dimensional vector space $F_i/(F_i\cap F_j')$ for all $i,j\in \mathbb{Z}$. 

We conclude this paper with the following conjecture.

\begin{conj}

\label{conj:main-geometry}
	Suppose $(F,F')\in Fl_n\times Fl_n$, and let $V_{ij} = F_i/(F_i\cap F_j')$ and $w=w(F,F')$, the relative position of $F$ and $F'$. There is an open set $\mathcal{N}$ of $\mathfrak{n}_{F}\cap \mathfrak{n}_{F'}$ such that for any $\eta\in \mathcal{N}$, the Jordan type $\operatorname{JCF}(\eta|_{V_{ij}})$ is $\Gamma_w(i,j)$, the partition at position $(i,j)$ in the growth diagram of $w$. 
\end{conj}

\bibliographystyle{amsalpha}
\bibliography{ref.bib}

\end{document}